\newtheorem{rem}{\hspace{1mm}Remark}[section]
\newtheorem{exam}{\hspace{1mm}Example}[section]
\newcommand{\beq}{\begin{equation}}
\newcommand{\eeq}{\end{equation}}
\newcommand{\bey}{\begin{eqnarray}}
\newcommand{\eey}{\end{eqnarray}}
\title{A Crank-Nicolson Finite Element Method and the Optimal Error Estimates
for the modified Time-dependent Maxwell-Schr\"{o}dinger Equations
\thanks{{This work is supported by National Natural Science Foundation
of China (grant 11571353, 91330202),  and
Project supported by the Funds for Creative Research Group of China
(grant 11321061).}}}
\author{Chupeng Ma\thanks{{Institute of
      Computational Mathematics and Scientific/Engineering Computing,
      Academy of Mathematics and Systems Science, Chinese Academy of
      Sciences, Beijing 100190, China; ({\tt machupeng@lsec.cc.ac.cn}).}}
\and Liqun Cao\thanks{{Corresponding author. LSEC, NCMIS, Institute of
      Computational Mathematics and Scientific/Engineering Computing,
      Academy of Mathematics and Systems Science, Chinese Academy of
      Sciences, Beijing 100190, China; ({\tt clq@lsec.cc.ac.cn}).}}}
\begin{document}

\maketitle

\begin{abstract}
In this paper we consider the initial-boundary value problem for the time-dependent Maxwell-Schr\"{o}dinger equations,
which arises in the interaction between the matter and the electromagnetic field for the semiconductor quantum devices.
A Crank-Nicolson finite element method for solving the problem is presented.
The optimal energy-norm error estimates for the numerical algorithm without any time-step restrictions are derived.
Numerical tests are then carried out to confirm the theoretical results.
\end{abstract}

\begin{keywords}
time-dependent Maxwell-Schr\"{o}dinger equations, finite element method, Crank-Nicolson scheme, optimal
error estimate.
\end{keywords}

\begin{AMS}
65N30, 65N55, 65F10, 65Y05
\end{AMS}

\section{Introduction}\label{sec-1}

When the characteristic size of the semiconductor device reaches the wavelength of an electron, the quantum effects become
important even dominant and can not be neglected. The accurate electromagnetic theory for the case is quantum electrodynamics
(QED), i.e. the second quantization for the matter and quantization for the electromagnetic field. However, so far it is extremely
difficult even impossible to employ QED to analyze the interaction between the matter and the electromagnetic field for some
complex systems. The semiclassical (or semi-quantum) electromagnetic models are widely used in the semiconductor quantum devices.
The basic idea is that we use the Maxwell's equations for the electromagnetic field while we use the Schr\"{o}dinger equation
of the non-relativistic quantum mechanics for the matter (see \cite{Ez, Sch}). The Maxwell-Schr\"{o}dinger coupled
 system (M-S) is written as follows:
\begin{equation}\label{eq:1-1}
\left\{
\begin{array}{@{}l@{}}
{\displaystyle  \mathrm{i}\hbar\frac{\partial \Psi(\mathbf{x},t)}{\partial t}=
\left\lbrace\frac{1}{2m}\left[\mathrm{i}\hbar\nabla +q\mathbf{A}(\mathbf{x},t)\right]^{2}
 + q \Phi(\mathbf{x},t)+V_{0} \right\rbrace\Psi(\mathbf{x},t),}\\[2mm]
{\displaystyle  \qquad \qquad \qquad \qquad \qquad \qquad \qquad \qquad \qquad \qquad(\mathbf{x},t)\in\Omega\times(0,T),}\\[2mm]
 {\displaystyle -\frac{\partial}{\partial t}\nabla\cdot\big(\epsilon\mathbf{A}(\mathbf{x},t)\big)
 -\nabla\cdot\big(\epsilon\nabla\Phi(\mathbf{x},t)\big) =q |\Psi(\mathbf{x},t)|^{2}, \,\,
 (\mathbf{x},t)\in\Omega\times(0,T),} \\[2mm]
{\displaystyle \epsilon\frac{\partial ^{2}\mathbf{A}(\mathbf{x},t)}{\partial t^{2}}+\nabla\times
\big({\mu}^{-1}\nabla\times \mathbf{A}(\mathbf{x},t)\big)
+\epsilon \frac{\partial (\nabla \Phi(\mathbf{x},t))}{\partial t} =\mathbf{J}_{q}(\mathbf{x},t),}\\[2mm]
 {\displaystyle  \qquad \qquad \qquad \qquad \qquad \qquad \qquad \qquad \qquad \qquad
(\mathbf{x},t)\in \Omega\times(0,T),}\\[2mm]
{\displaystyle \mathbf{J}_q=-\frac{\mathrm{i}q\hbar}{2m}\big(\Psi^{\ast}\nabla{\Psi}-\Psi\nabla{\Psi}^{\ast}\big)-\frac{\vert q\vert^{2}}{m}\vert\Psi\vert^{2}\mathbf{A},} \\[2mm]
{\displaystyle \Psi, \Phi, \mathbf{A} \,\, \mathrm{subject \ to \ the \ appropriate \ initial \ and\ boundary \ conditions}, }
\end{array}
\right.
\end{equation}
where $ \Omega\subset \mathbb{R}^d  $, $ d\geq 2 $ is a bounded Lipschitz polyhedral convex domain,
$ \Psi^{\ast} $ denotes the complex conjugate of $ \Psi $,
$\epsilon$ and $\mu$ respectively denote the electric permittivity and the magnetic permeability of the material and $V_0$ is the constant potential energy.

It is well-known that the solutions of the Maxwell-Schr\"{o}dinger equations (\ref{eq:1-1}) are not unique. In fact,
for any function $\chi : \Omega\times (0,T)\rightarrow \mathbb{R}$, if $(\Psi, \Phi, \mathbf{A}) $ is a solution of (\ref{eq:1-1}), then $(\exp(i\chi)\Psi, \Phi-\partial_{t}\chi, \mathbf{A}+\nabla \chi) $ is also a solution of (\ref{eq:1-1}).
It is often assumed that the further equations can be adjoined to the Maxwell-Schr\"{o}dinger equations by means
of a gauge transformation. In this paper we consider the M-S system (\ref{eq:1-1}) under the temporal gauge (also called Weyl gauge), i.e. $\Phi=0 $.

In this paper we employ the atomic units,
i.e. $\hbar=m=q=1 $. For simplicity, we also assume that $\epsilon=\mu=1$ without
loss of generality. Hence, $\Psi$ and $\mathbf{A}$ satisfy the following Maxwell-Schr\"{o}dinger equations :
\begin{equation}\label{eq:1-2}
\left\{
\begin{array}{@{}l@{}}
{\displaystyle  -\mathrm{i}\frac{\partial \Psi}{\partial t}+
\frac{1}{2}\left(\mathrm{i}\nabla +\mathbf{A}\right)^{2}\Psi
 + V_{0}\Psi = 0 ,\,\, (\mathbf{x},t)\in
\Omega\times(0,T),}\\[2mm]
{\displaystyle \frac{\partial ^{2}\mathbf{A}}{\partial t^{2}}+\nabla\times
(\nabla\times \mathbf{A}) +\frac{\mathrm{i}}{2}\big(\Psi^{*}\nabla{\Psi}-\Psi\nabla{\Psi}^{*}\big)+\vert\Psi\vert^{2}\mathbf{A}=0,
\,\, (\mathbf{x},t)\in \Omega\times(0,T),}\\[2mm]
\end{array}
\right.
\end{equation}
Here we omit the initial and boundary conditions for $\Psi$ and $\mathbf{A}$ temporarily.

Under the temporal gauge, the second equation in (\ref{eq:1-1}) involving the divergence of $\mathbf{A}$  can be rewritten as
\begin{equation}\label{eq:1-2-0}
 -\frac{\partial}{\partial t}\nabla\cdot\mathbf{A}(\mathbf{x},t) = |\Psi(\mathbf{x},t)|^{2}, \,\,(\mathbf{x},t)\in\Omega\times(0,T),
\end{equation}
which can be derived from (\ref{eq:1-2}) if the solutions of (\ref{eq:1-2}) are sufficiently smooth and the initial datas are consistent.

 Integrating with respect to $t$ on the both sides of (\ref{eq:1-2-0}),  we have
\begin{equation}\label{eq:1-3}
{\displaystyle  -\nabla\cdot\mathbf{A}(\mathbf{x},t) + \nabla\cdot\mathbf{A}(\mathbf{x},0) - \int_{0}^{t}\rho(\mathbf{x}, \tau) d\tau = 0 \, ,}
\end{equation}
where $ \rho(\mathbf{x}, t) =|\Psi(\mathbf{x},t)|^{2} $.

For the purpose of theoretical analysis, we take the gradient of (\ref{eq:1-3}), multiply it by a parameter $\gamma > 0$ and add it to the second equation of (\ref{eq:1-2}), to obtain
\begin{equation}\label{eq:1-4}
\left\{
\begin{array}{@{}l@{}}
{\displaystyle  -\mathrm{i}\frac{\partial \Psi}{\partial t}+
\frac{1}{2}\left(\mathrm{i}\nabla +\mathbf{A}\right)^{2}\Psi
 + V_{0}\Psi = 0 ,\,\, (\mathbf{x},t)\in
\Omega\times(0,T),}\\[2mm]
{\displaystyle \frac{\partial ^{2}\mathbf{A}}{\partial t^{2}}+\nabla\times
(\nabla\times \mathbf{A}) -\gamma \nabla(\nabla \cdot \mathbf{A}) +\frac{\mathrm{i}}{2}\big(\Psi^{*}\nabla{\Psi}-\Psi\nabla{\Psi}^{*}\big)+\vert\Psi\vert^{2}\mathbf{A} }\\[2mm]
{\displaystyle \quad\,+\, \gamma \nabla(\nabla \cdot \mathbf{A}(\mathbf{x},0)) - \gamma \nabla \int_{0}^{t}\rho(\mathbf{x}, \tau) d\tau=0,
\,\, (\mathbf{x},t)\in \Omega\times(0,T).}\\[2mm]
\end{array}
\right.
\end{equation}

The parameter $\gamma $ is referred to as the penalty factor. The choice of $\gamma$ depends on how much emphasis one places on the equality (\ref{eq:1-2-0}). In this paper, we keep $\gamma$ fixed. To avoid the difficulty for integro-differential equations, assuming that the change of the density function $\rho(\mathbf{x}, t)$
is smooth with respect to $t$ for all $\mathbf{x} \in \Omega$, we give an approximation of $\int_{0}^{t}\rho(\mathbf{x}, \tau) d\tau $ as follows.

First  denoting by $t_j= jT/M, j=0,1,\cdots,M$, we divide the time interval $[0,T]$ into $M$ subintervals $[0, t_1], (t_1, t_2],\cdots, (t_{M-1}, T]$. For $t \in[0, t_1]$,
$\rho(\mathbf{x}, t)$ is approximated by Taylor expansion and the initial conditions:
\begin{equation*}
\rho(\mathbf{x}, t) \approx \rho(\mathbf{x}, 0)+t\frac{\partial \rho}{\partial t}(\mathbf{x}, 0),\quad \forall x \in \Omega
\end{equation*}
and
\begin{equation*}
 \int_{0}^{t}\rho(\mathbf{x}, \tau) d\tau \approx t\rho(\mathbf{x}, 0)
+ \frac{1}{2}t^{2} \frac{\partial \rho}{\partial t}(\mathbf{x}, 0),\quad \forall x \in \Omega.
\end{equation*}

The computation of  $\frac{\displaystyle \partial \rho}{\displaystyle \partial t}(\mathbf{x}, 0)$ involves $\frac{\displaystyle\partial \Psi}{\displaystyle\partial t}(\mathbf{x}, 0) $,
the time derivative of initial wave function. Here we assume the initial conditions are consistent and so we can obtain
$\frac{\displaystyle \partial \Psi}{\displaystyle\partial t}(\mathbf{x}, 0) $ from Schr\"{o}dinger's equation.
Given an approximation of $\int_{0}^{t}\rho(\mathbf{x}, \tau) d\tau $ in $ [0, t_1]$, we can solve the coupled differential equations (\ref{eq:1-4})
in the subinterval $[0, t_1]$ and integrate the density function to obtain $\int_{0}^{t_1}\rho(\mathbf{x}, \tau) d\tau $.
Then for $t \in (t_1, t_2]$,  $\int_{0}^{t}\rho(\mathbf{x}, \tau) d\tau $ can be calculated as follows:
\begin{equation*}
\begin{array}{@{}l@{}}
{\displaystyle \int_{0}^{t}\rho(\mathbf{x}, \tau) d\tau = \int_{0}^{t_1}\rho(\mathbf{x}, \tau) d\tau
 +\int_{t_1}^{t}\rho(\mathbf{x}, \tau) d\tau} \\[2mm]
{\displaystyle\quad\quad
\quad \approx \int_{0}^{t_1}\rho(\mathbf{x}, \tau) d\tau + (t-t_1)\rho(\mathbf{x}, t_1)+\frac{1}{2}(t-t_1)^{2} \frac{\partial \rho}{\partial t}(\mathbf{x}, t_1), \quad \forall x \in \Omega}.
\end{array}
\end{equation*}

Now we solve the Maxwell-Schr\"{o}dinger equations (\ref{eq:1-4}) in the subinterval $(t_1, t_2]$. Repeating the above procedure,
we can solve the Maxwell-Schr\"{o}dinger equations (\ref{eq:1-4})
in the subinterval $(t_2, t_3],\cdots, (t_{M-1}, T]$ successively. Therefore, we decompose the original system in $[0, T] $ into
M system in $[0, t_1], (t_1, t_2],\cdots, (t_{M-1}, T]$, respectively.
For $t\in (t_{j-1}, t_j], j = 1, 2,\cdots, M$, the Maxwell-Schr\"{o}dinger equations (\ref{eq:1-4}) can be rewritten as follows:
\begin{equation}\label{eq:1-5}
\left\{
\begin{array}{@{}l@{}}
{\displaystyle  -\mathrm{i}\frac{\partial \Psi}{\partial t}+
\frac{1}{2}\left(\mathrm{i}\nabla +\mathbf{A}\right)^{2}\Psi
 + V_{0}\Psi = 0 ,\,\, (\mathbf{x},t)\in
\Omega\times(t_{j-1}, t_j],}\\[2mm]
{\displaystyle \frac{\partial ^{2}\mathbf{A}}{\partial t^{2}}+\nabla\times
(\nabla\times \mathbf{A})-\gamma \nabla(\nabla \cdot \mathbf{A})+\frac{\mathrm{i}}{2}\big(\Psi^{*}\nabla{\Psi}-\Psi\nabla{\Psi}^{*}\big)}\\[2mm]
{\displaystyle \quad\quad+\,\,\vert\Psi\vert^{2}\mathbf{A} =\mathbf{f}_{j}(\mathbf{x},t),\quad \,\, (\mathbf{x},t)\in \Omega\times(t_{j-1}, t_j],}\\[2mm]
\end{array}
\right.
\end{equation}
where $\mathbf{f}_j(\mathbf{x},t)$ is the known function.

\begin{rem}\label{rem1-1}
We can get the modified Maxwell-Schr\"{o}dinger equations (\ref{eq:1-5}) under the assumption that the change of the density function $\rho(\mathbf{x}, t)$
is smooth with respect to $t$ for all $\mathbf{x} \in \Omega$. If the initial wave function $\Psi(\mathbf{x},0)$ is the eigenfunction of the stationary Schr\"{o}dinger equation
and the incoming electromagnetic field is weak and can be considered as a small perturbation to the quantum system, this assumption is reasonable.
The choice of the number $M$ of subintervals depends on the initial wave function, the incoming electromagnetic field and $T$.
\end{rem}

In this paper, we consider the following modified Maxwell-Schr\"{o}dinger equations:
\begin{equation}\label{eq:1-6}
\left\{
\begin{array}{@{}l@{}}
{\displaystyle  -\mathrm{i}\frac{\partial \Psi}{\partial t}+
\frac{1}{2}\left(\mathrm{i}\nabla +\mathbf{A}\right)^{2}\Psi
 + V_{0}\Psi = 0 ,\,\, (\mathbf{x},t)\in
\Omega\times(0,T),}\\[2mm]
{\displaystyle \frac{\partial ^{2}\mathbf{A}}{\partial t^{2}}+\nabla\times
(\nabla\times \mathbf{A}) - \gamma\nabla(\nabla \cdot \mathbf{A}) +\frac{\mathrm{i}}{2}\big(\Psi^{*}\nabla{\Psi}-\Psi\nabla{\Psi}^{*}\big)}\\[2mm]
{\displaystyle \quad\quad+\,\,\vert\Psi\vert^{2}\mathbf{A} =\mathbf{g}(\mathbf{x},t),\quad \,\, (\mathbf{x},t)\in \Omega\times(0,T).}\\[2mm]
\end{array}
\right.
\end{equation}

The boundary conditions are
\begin{equation}\label{eq:1-7}
{\displaystyle \Psi(\mathbf{x},t)=0,\quad \mathbf{A}(\mathbf{x},t)\times\mathbf{n}=0, \quad \nabla \cdot \mathbf{A}(\mathbf{x},t) = 0, \quad (\mathbf{x},t)\in
\partial \Omega\times(0,T),}
\end{equation}
and the initial conditions are
\begin{equation}\label{eq:1-8}
{\displaystyle \Psi(\mathbf{x},0) = \Psi_0(\mathbf{x}),\quad\mathbf{A}(\mathbf{x},0)=\mathbf{A}_{0}(\mathbf{x}),\quad
\mathbf{A}_{t}(\mathbf{x},0)=\mathbf{A}_{1}(\mathbf{x}),}
\end{equation}
where $ \mathbf{A}_{t} $ denotes the derivative of $ \mathbf{A} $ with respect to the time $ t $,
$ \mathbf{n}=(n_1, n_2, n_3) $ is the outward unit normal to the boundary $ \partial \Omega $. We assume that $\nabla \cdot \mathbf{A}_{0} = \nabla \cdot \mathbf{A}_{1}= 0$ on $\partial \Omega$.

\begin{rem}\label{rem1-2}
The boundary condition $
\Psi(x)=0 $ on $\partial \Omega $ implies that the particle is confined in a whole domain $\Omega$. The boundary condition
$  \mathbf{A}(\mathbf{x},t)\times\mathbf{n}=0 $ on $\partial \Omega$  is referred to as the perfect conductive boundary condition.
The boundary condition $\nabla \cdot \mathbf{A}(\mathbf{x},t) = 0$ on $\partial \Omega$ can be deduced from the boundary condition of
$\,\Psi$ and (\ref{eq:1-3}) if the initial conditions $\mathbf{A}_{0}$ and $\mathbf{A}_{1}$ satisfy $\nabla \cdot \mathbf{A}_{0} = \nabla \cdot \mathbf{A}_{1}= 0$
on $\partial \Omega$. As for the determination of the boundary conditions for the vector potential $\mathbf{A}$, we refer to \cite{Weng}.
\end{rem}

\begin{rem}\label{rem1-3}
The existence and uniqueness of the solution for the time-dependent Maxwell-Schr\"{o}dinger equations (\ref{eq:1-2}) have been investigated in \cite{Gin, Guo, Nak, Nak-1, Nak-2, Shi, Wa}. However, the results of the well-posedness of the problem were obtained only for the Cauchy problem in $\mathbf{R}^{d} $, $ d\geq 1 $ instead of the initial-boundary value problem.
To the best of our knowledge, the existence  and uniqueness of the solution for the Maxwell-Schr\"{o}dinger equations
in a bounded domain seem to be open. For the modified equations (\ref{eq:1-6})-(\ref{eq:1-8}), we will investigate the existence of solutions in another paper.
\end{rem}

Many authors have discussed the numerical methods for the time-dependent Maxwell-Schr\"{o}dinger equations.
We recall some important studies about the problem.
Sui and his collaborators \cite{Sui} used the finite-difference time-domain (FDTD) method to
solve the Maxwell-Schr\"{o}dinger equations and to simulate a simple electron tunneling problem.
Pierantoni, Mencarelli and Rozzi \cite{Pi} applied the transmission line matrix method(TLM) to solve
the Maxwell's equations and employed the FDTD method to solve the Schr\"{o}dinger equation, and did the simulation for
a carbon nanotube between two metallic electrodes. Ahmed and Li \cite{Ah-1} used the FDTD method for the Maxwell-Schr\"{o}dinger system
to simulate plasmonics nanodevices.  The numerical studies listed above all include a step where they extract the vector potential $\mathbf{A}$
and the scalar potential $\Phi$ from the electric field $\mathbf{E}$ and the magnetic field $\mathbf{H}$ after solving the Maxwell's equations
involving $\mathbf{E}$ and $\mathbf{H}$. Recently, Ryu \cite{Ryu} employed directly the FDTD scheme to discretize the Maxwell-Schr\"{o}dinger equations (\ref{eq:1-1}) under the Lorentz gauge and to simulate a single electron in an artificial atom excited by an incoming electromagnetic field. Other related studies on this topic have been reported in \cite{Oh, Sa, Tur} and the references therein.

There are few results on the finite element method (FEM) of the Maxwell-Schr\"{o}dinger equations and the convergence analysis. In this paper we will present a Crank-Nicolson finite element method for solving the problem
(\ref{eq:1-6})-(\ref{eq:1-8}), i.e. the finite element method in space and the Crank-Nicolson scheme in time.
Then we will derive the optimal error estimates for the proposed method. Roughly speaking, compared with explicit algorithms such as the FDTD method, our method is more stable and suffers from less restriction in the time step-size since we use the Crank-Nicolson scheme in the time direction. Moreover, our method is more appropriate to deal with materials with discontinuous electromagnetic coefficients than the FDTD method.
our work is motivated by \cite{Mu-2} in which Mu and Huang proposed an alternating Crank-Nicolson method for
the time-dependent Ginzburg-Landau equations. The optimal error estimates were derived under the time step restrictive
conditions $\Delta t \leq O(h^{\frac{11}{12}})$ for the two-dimension model and $\Delta t \leq O(h^{2})$ for the three-dimension model,
where $h$ and $\Delta t$ are the spatial mesh size and the time step, respectively.
The related convergence results associated with the time-dependent Ginzburg-Landau equations can be also given in \cite{Bu-1,Chen-1, Du-2, Gao-1, Mu-1}.
It should be emphasized that although the time-dependent
Ginzburg-Landau model is somehow formally similar to the time-dependent Maxwell-Schr\"{o}dinger system, there exists the essential
difference between them. The former is classified as a parabolic system and the latter is a hyperbolic system.
The main key point in our work is how to avoid using the finite element inverse estimates when dealing with the nonlinear terms.
The new ideas are to derive the energy-norm error estimates for the Schr\"{o}dinger's equation,
and to employ some tricks to eliminate the nonlinear terms both in the Schr\"{o}dinger's equation and in Maxwell's equations, respectively.

The remainder of this paper is organized as follows.
In section~\ref{sec-2}, a Crank-Nicolson scheme with the Galerkin finite element
approximation for the modified Maxwell-Schr\"{o}dinger equations (\ref{eq:1-6})-(\ref{eq:1-8}) is developed.
In section~\ref{sec-3}, the stability estimates are given. The optimal error estimates for the numerical solution without any 
restriction on time step are derived in section~\ref{sec-4}. Finally, the numerical testes are then carried out to confirm the theoretical results.

Throughout this paper, we denote by $C$ a generic positive constant
independent of the mesh size and the time step without distinction.

\section{A Crank-Nicolson Galerkin finite element scheme}\label{sec-2}

In this section, we present a numerical scheme for the modified Maxwell-Schr\"{o}dinger equations (\ref{eq:1-6})-(\ref{eq:1-8}) using Galerkin
finite element method in space and the Crank-Nicolson scheme in time.
To start with, here and afterwards, we assume that $\Omega $ is a bounded Lipschitz polygonal convex domain in $\mathbb{R}^{2}$
(or a bounded Lipschitz polyhedron convex domain
in $\mathbb{R}^{3}$). 

We introduce the following notation. Let $W^{s,p}(\Omega)$ denote the conventional Sobolev spaces of the real-valued functions.
As usual, $W^{s,2}(\Omega)$ and  $W^{s,2}_{0}(\Omega)$ are denoted by  $H^{s}(\Omega)$ and $H^{s}_{0}(\Omega)$ respectively.
We use $\mathcal{W}^{s,p}(\Omega)=\{u+\mathrm{i}v\,|\, u,v \in W^{s,p}(\Omega)\} $
and $\mathcal{H}^{s}(\Omega)=\{u+\mathrm{i}v\,|\, u,v \in H^{s}(\Omega)\}$ with calligraphic letters for Sobolev spaces of the complex-valued functions, respectively.
Furthermore, let $\mathbf{W}^{s,p}(\Omega) =[W^{s,p}(\Omega)]^{d} $ and $\mathbf{H}^{s}(\Omega)=[H^{s}(\Omega)]^{d}$ with bold faced letters be Sobolev spaces of
the vector-valued functions with $d$ components ($d$=2,\,3). $ L^{2}$ inner-products in $H^{s}(\Omega) $, $\mathcal{H}^{s}(\Omega)$
and $\mathbf{H}^{s}(\Omega)$  are denoted by $(\cdot,\cdot )$ without ambiguity.

In particular, we introduce the following subspace of $\mathbf{H}^{1}(\Omega)$:
\begin{equation*}
\mathbf{H}^{1}_{t}(\Omega)=\{\mathbf{A}\,|\,\mathbf{A}\in\mathbf{H}^{1}(\Omega),\,\,\mathbf{A}\times\mathbf{n}=0 \,\, \,{\rm on}\, \,\,\partial\Omega\}
\end{equation*}
The semi-norm on $\mathbf{H}^{1}_{t}(\Omega)$ is defined by
\begin{equation*}
\Vert \mathbf{u} \Vert_{ \mathbf{H}^{1}_{t}(\Omega)} : = \left[\Vert\nabla \cdot \mathbf{u}\Vert^{2}_{\mathbf{L}^{2}(\Omega)} + \Vert\nabla \times \mathbf{u}\Vert^{2}_{\mathbf{L}^{2}(\Omega)
}\right]^{\frac{1}{2}},
\end{equation*}
which is equivalent to the standard $\mathbf{H}^{1}(\Omega)$-norm $\Vert \mathbf{u} \Vert_{ \mathbf{H}^{1}(\Omega)}$, see \cite{Gir}.

To take into account the time-independence,  for a time $T > 0 $ fixed, let $L^p(0,T;X)$ be the Bochner space defined in \cite{Simon} for $p\in[1,\infty]$ and a Banach space $X$.

The weak formulation of the Maxwell-Schr\"{o}dinger system (\ref{eq:1-6})- (\ref{eq:1-8}) can be specified as follows:
given $\mathbf{g} \in {L}^2(0,T;\mathbf{L}^2(\Omega))$, find $(\Psi,\mathbf{A})\in \mathcal{H}_{0}^{1}(\Omega)\times \mathbf{H}_{t}^{1}(\Omega)$ such that $\forall t\in(0,T)$,
\begin{equation}\label{eq:2-1}
\left\{
\begin{array}{@{}l@{}}
{\displaystyle
 -\mathrm{i}(\frac{\partial \Psi}{\partial t},\varphi)+
\frac{1}{2}(\left(\mathrm{i}\nabla +\mathbf{A}\right)\Psi,\left(\mathrm{i}\nabla +\mathbf{A}\right)\varphi)
 + (V_{0}\Psi,\varphi) = 0 ,\quad \forall \varphi\in\mathcal{H}_{0}^{1}(\Omega)}\\[2mm]
{\displaystyle ( \frac{\partial ^{2}\mathbf{A}}{\partial t^{2}},\mathbf{v})+(\nabla\times
 \mathbf{A},\nabla\times\mathbf{v}) + \gamma(\nabla\cdot
 \mathbf{A},\nabla\cdot\mathbf{v})+(\frac{\mathrm{i}}{2}\big(\Psi^{*}\nabla{\Psi}-\Psi\nabla{\Psi}^{*}\big),\mathbf{v}) }\\[2mm]
{\displaystyle\quad \quad  \quad + \,( |\Psi|^{2}\mathbf{A},\mathbf{v})= ( \mathbf{g},\mathbf{v}) ,\qquad \qquad \qquad\qquad\qquad\forall\mathbf{v}\in\mathbf{H}^{1}_{t}(\Omega),}
\end{array}
\right.
\end{equation}
with the initial conditions $\Psi_{0} \in \mathcal{H}^{1}_{0}(\Omega)$, $\mathbf{A}_{0} \in \mathbf{H}^{1}_{t}(\Omega)$ and $\mathbf{A}_{t}(\cdot , 0) \in \mathbf{L}^{2}(\Omega)$.

Let M be a positive integer and let $\Delta t =T / M$  be the time step. For any k=1,2,$\cdots, M$, we introduce the following notation:
\begin{equation}\label{eq:2-2}
\begin{array}{@{}l@{}}
{\displaystyle \partial U^{k}= (U^{k}-U^{k-1})/\Delta t, \quad\partial^{2} U^{k}=(\partial U^{k}-\partial U^{k-1})/\Delta t,}\\[2mm]
{\displaystyle \overline{U}^{k} = (U^{k}+U^{k-1})/2, \quad \widetilde{U}^{k}=(U^{k}+U^{k-2})/2,}\\[2mm]
\end{array}
\end{equation}
for any given sequence $\{U^{k}\}_{0}^{M}$ and denote $u^{k}=u(\cdot,t^{k})$ for any given functions
$u\in C(0,T;\,X)$ with a Banach space $ X $.

Let $\mathcal{T}_{h}=\{K\}$ be a regular partition of $\Omega$ into triangles in $\mathbb{R}^{2}$ or tetrahedrons
in $\mathbb{R}^{3}$ without loss of generality, where the mesh size $h= \mathrm{max}_{K\in\mathcal{T}_{h}}\{diam(K)\}$. For any $K \in \mathcal{T}_{h}$, we denote by $P_{r}(K)$ the spaces of polynomials of degree $r \;(r \geq 1)$ defined on $K$. We now define the standard Lagrange finite element space
\begin{equation}
Y^{r}_{h} = \{u_{h} \in C(\overline{\Omega}):\; u_{h}|_{K}  \in P_{r}(K), \; \forall \; K \in  \mathcal{T}_{h}\}.
\end{equation}
We have the following finite element subspaces of $H_{0}^{1}(\Omega)$,  $\mathcal{H}_{0}^{1}(\Omega)$  and $\mathbf{H}^{1}_{\rm t}(\Omega)$ 
\begin{equation}
V^{r}_{h} = Y^{r}_{h} \cap  H_{0}^{1}(\Omega), \quad \mathcal{V}^{r}_{h} = V^{r}_{h} \oplus {\rm i}V^{r}_{h},\quad \mathbf{V}^{r}_{h} = \big( Y^{r}_{h}\big)^{3}\cap \mathbf{H}^{1}_{\rm t}(\Omega).
\end{equation}

We shall approximate the wave function $\Psi$ and the vector potential $\mathbf{A}$ by the functions in 
 $\mathcal{V}_{h}^{r}$ and $\mathbf{V}_{h}^{r}$, respectively. Let $I_h$ and $ {\bm \pi}_h  $ be the conventional pointwise interpolation operators on $\mathcal{V}_{h}^{r}$ and $\mathbf{V}_{h}^{r}$, respectively. For $0\leq s \leq m\leq r+1$, $m\geq 2$, $2\leq p \leq \infty$, standard finite element theory gives that \cite{Bre}:
 \begin{equation}\label{eq:2-3}
 \begin{array}{@{}l@{}}
{\displaystyle \Vert u - I_h u \Vert_{\mathcal{W}^{s}_{p}} \leq Ch^{m-s}\Vert u \Vert_{\mathcal{W}^{m}_{p}} \quad \forall \; u \in \mathcal{W}^{m}_{p}(\Omega) ,} \\[2mm]
{\displaystyle \Vert \mathbf{v} - {\bm \pi}_{h} \mathbf{v} \Vert_{\mathbf{W}^{s}_{p}} \leq Ch^{m-s}\Vert \mathbf{v} \Vert_{\mathbf{W}^{m}_{p}} \quad \forall \; \mathbf{v} \in \mathbf{W}^{m}_{p}(\Omega).}
\end{array}
\end{equation}

For convenience, assume that the function $\mathbf{A}$ is defined in the interval $[-\Delta t, T]$
in terms of the time variable $t$. We can compute $ \mathbf{A}(\cdot,-\Delta t) $ by
\begin{equation}\label{eq:2-4}
{\displaystyle \mathbf{A}(\cdot,-\Delta t)=\mathbf{A}(\cdot,0)-\Delta t \frac{\partial \mathbf{A}}{\partial t}(\cdot,0)=\mathbf{A}_{0}-\Delta t\mathbf{A}_{1}},
\end{equation}
which leads to an approximation to $\mathbf{A}^{-1}$ with second order accuracy.

A Crank-Nicolson Galerkin finite element approximation to the Maxwell-Schr\"{o}dinger system (\ref{eq:2-1}) is formulated as follows:
\begin{equation}\label{eq:2-5}
{\displaystyle \Psi_{h}^{0}=I_{h}\Psi_{0},\quad\mathbf{A}_{h}^{0}={\bm \pi}_h \mathbf{A}_{0},\quad
\mathbf{A}_{h}^{0}-\mathbf{A}_{h}^{-1}=\Delta t {\bm \pi}_h\mathbf{A}_{1},}
\end{equation}
and find $(\Psi_{h}^{k},\mathbf{A}_{h}^{k})\in\mathcal{V}_{h}^{r}\times\mathbf{V}^{r}_{h}$ such that for $k=1,2 ,\cdots, M$,
\begin{equation}\label{eq:2-6}
\left\{
\begin{array}{@{}l@{}}
{\displaystyle
 -\mathrm{i}(\partial\Psi_{h}^{k},\varphi)+
\frac{1}{2}\left((\mathrm{i}\nabla +\overline{\mathbf{A}}_{h}^{k})\overline{\Psi}_{h}^{k},(\mathrm{i}\nabla +\overline{\mathbf{A}}_{h}^{k})\varphi\right)
 + (V_{0}\overline{\Psi}_{h}^{k},\varphi) = 0 ,\quad \forall \varphi\in\mathcal{V}_{h}^{r}}\\[2mm]
{\displaystyle (\partial^{2}\mathbf{A}_{h}^{k},\mathbf{v}) +\left(\frac{\mathrm{i}}{2}\big((\Psi_{h}^{k-1})^{\ast}\nabla{\Psi_{h}^{k-1}}
-\Psi_h^{k-1}\nabla{(\Psi_{h}^{k-1})}^{\ast}\big),\mathbf{v}\right) + (\nabla\times
 \widetilde{\mathbf{A}}_{h}^{k},\nabla\times\mathbf{v})}\\[2mm]
{\displaystyle\quad +\gamma(\nabla\cdot
 \widetilde{\mathbf{A}}_{h}^{k},\nabla\cdot\mathbf{v})+\big(|\Psi_{h}^{k-1}|^{2}\frac{\overline{\mathbf{A}}_{h}^{k}
 +\overline{\mathbf{A}}_{h}^{k-1}}{2},\mathbf{v}\big)=( \mathbf{g}^{k-1}, \mathbf{v}), \quad \forall\mathbf{v}\in\mathbf{V}^{r}_{h},}
\end{array}
\right.
\end{equation}
where $ \overline{\mathbf{A}}_h^k $, $ \overline{\Psi}_h^k $ and $ \widetilde{\mathbf{A}}_h^k $ have been defined in (\ref{eq:2-2}).

For convenience, we define the following bilinear forms:
\begin{equation}\label{eq:2-7}
\begin{array}{@{}l@{}}
{\displaystyle B(\mathbf{A};\Psi,\varphi)=\left((\mathrm{i}\nabla+\mathbf{A})\Psi,(\mathrm{i}\nabla+\mathbf{A})\varphi\right),}\\[2mm]
{\displaystyle D(\mathbf{A},\mathbf{v})=\gamma(\nabla\cdot \mathbf{A},\nabla\cdot \mathbf{v})+
(\nabla\times\mathbf{A},\nabla\times\mathbf{v}),}\\[2mm]
{\displaystyle f(\Psi,\varphi)=\frac{\mathrm{i}}{2}(\varphi^{\ast}\nabla\Psi-\Psi\nabla\varphi^{\ast}).}
\end{array}
\end{equation}

Then the variational forms of the modified Maxwell-Schr\"{o}dinger equations and the discrete system can be written as follows:
\begin{equation}\label{eq:2-8}
\left\{
\begin{array}{@{}l@{}}
{\displaystyle -\mathrm{i}(\frac{\partial \Psi}{\partial t},\varphi)+\frac{1}{2}B(\mathbf{A};\Psi,\varphi)+(V_{0}\Psi,\varphi)=0,\quad \forall \varphi\in\mathcal{H}_{0}^{1}(\Omega),}\\[2mm]
{\displaystyle (\frac{\partial ^{2}\mathbf{A}}{\partial t^{2}},\mathbf{v})
+D(\mathbf{A},\mathbf{v})+( f(\Psi,\Psi),\mathbf{v})+( |\Psi|^{2}\mathbf{A},\mathbf{v})=( \mathbf{g}, \mathbf{v}), \quad \forall\mathbf{v}\in\mathbf{H}^{1}_{t}(\Omega),}
\end{array}
\right.
\end{equation}
and for $ k=1,2,\cdots,M$,
\begin{equation}\label{eq:2-9}
\left\{
\begin{array}{@{}l@{}}
{\displaystyle
 -\mathrm{i}(\partial\Psi_{h}^{k},\varphi)+
\frac{1}{2}B(\overline{\mathbf{A}}_{h}^{k};\overline{\Psi}_{h}^{k},\varphi)
 +( V_{0}\overline{\Psi}_{h}^{k},\varphi) = 0 ,\quad \forall \varphi\in\mathcal{V}_{h}^{r}}\\[2mm]
{\displaystyle (\partial ^{2}\mathbf{A}_{h}^{k},\mathbf{v})+D(\widetilde{\mathbf{A}}_{h}^{k},\mathbf{v})
+\left( f(\Psi_{h}^{k-1},\Psi_h^{k-1}),\mathbf{v}\right)}\\[2mm]
 {\displaystyle\quad+\big( |\Psi_{h}^{k-1}|^{2}\frac{(\overline{\mathbf{A}}_{h}^{k}+\overline{\mathbf{A}}_{h}^{k-1})}{2},\mathbf{v}\big)=( \mathbf{g}^{k-1}, \mathbf{v}), \quad
 \forall\mathbf{v}\in\mathbf{V}^{r}_{h}.}
\end{array}
\right.
\end{equation}

Note that after discretization in time and space, the Maxwell equation and Schr\"{o}dinger equation in the discrete system (\ref{eq:2-9}) are decoupled. At each time step, we only need to solve the two discrete linear equations alternately.     

In this paper we assume that the modified  Maxwell-Schr\"{o}dinger equations (\ref{eq:2-8}) has one and only one weak solution $(\Psi,\mathbf{A})$ and
the following regularity conditions are satisfied:
\begin{equation}\label{eq:2-10}
\begin{array}{@{}l@{}}
{\displaystyle
\Psi,\Psi_{t},\Psi_{tt} \in {L}^{\infty}(0, T; \mathcal{H}^{r+1}(\Omega)),\quad \Psi_{ttt} \in {L}^{\infty}(0, T; \mathcal{H}^{1}(\Omega)),}\\[2mm]
{\displaystyle\Psi_{tttt} \in L^{2}(0, T; \mathcal{L}^{2}(\Omega)),\quad
\mathbf{A},\mathbf{A}_{t},\mathbf{A}_{tt} \in {L}^{\infty}(0, T; \mathbf{H}^{r+1}(\Omega)),}\\[2mm]
{\displaystyle\mathbf{A}_{ttt} \in {L}^{\infty}(0, T; \mathbf{H}^{1}(\Omega)),\mathbf{A}_{tttt} \in L^{2}(0, T; \mathbf{L}^{2}(\Omega)).}
\end{array}
\end{equation}

For the initial conditions $(\Psi_{0},\mathbf{A}_{0},\mathbf{A}_{1})$ and the right hand function $\mathbf{g}(\mathbf{x},t)$, we assume that
\begin{equation}\label{eq:2-11}
\begin{array}{@{}l@{}}
{\displaystyle \Psi_{0}\in \mathcal{H}^{r+1}(\Omega) \cap \mathcal{H}_{0}^{1}(\Omega), \,\,\, \mathbf{A}_{0},\mathbf{A}_{1}\in\mathbf{H}^{r+1}(\Omega)\cap\mathbf{H}_{t}^{1}(\Omega),\,\,
\,\mathbf{g}\in C(0, T; \mathbf{L}^{2}(\Omega)).}
\end{array}
\end{equation}

We now give the main convergence result in this paper as follows:
\begin{theorem}\label{thm2-1}
Suppose that $ \Omega\subset \mathbb{R}^d  $, $ d\geq 2 $ is a bounded Lipschitz polyhedral convex domain.
Let $(\Psi,\mathbf{A})$ be the unique solution of the modified Maxwell-Schr\"{o}dinger equations (\ref{eq:2-8}), and let
$ (\Psi_h^k,\mathbf{A}_h^k)$ be the numerical solution of the full discrete scheme (\ref{eq:2-9}) associated with (\ref{eq:2-8}). Under the assumptions
(\ref{eq:2-10}) and (\ref{eq:2-11}), we have the following error estimates
\begin{equation}\label{eq:2-12}
\begin{array}{@{}l@{}}
 {\displaystyle \max_{1\leq k \leq M}\big\{\|\Psi_{h}^{k}-\Psi^{k}\|_{\mathcal{L}^2(\Omega)}^{2}
 +\|\nabla(\Psi_{h}^{k}-\Psi^{k})\|_{\mathbf{L}^2(\Omega)}^{2}
 + \|\mathbf{A}_{h}^{k}-\mathbf{A}^{k}\|_{\mathbf{L}^2(\Omega)}^{2}}\\[2mm]
{\displaystyle \quad+\|\nabla\cdot(\mathbf{A}_{h}^{k}-\mathbf{A}^{k})\|_{{L}^2(\Omega)}^{2}
+\|\nabla\times(\mathbf{A}_{h}^{k}-\mathbf{A}^{k})\|_{\mathbf{L}^2(\Omega)}^{2}\big\}
\leq C \big\{h^{2r}+(\Delta t)^{4}\big\},}
\end{array}
\end{equation}
where $ \Psi^k=\Psi(\cdot, t^k) $, $ \mathbf{A}^k=\mathbf{A}(\cdot, t^k) $,
and $C$ is a constant independent of $h$, $\Delta t$.
\end{theorem}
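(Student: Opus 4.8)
The plan is to estimate the errors in the energy norm by comparing the fully discrete scheme (\ref{eq:2-9}) with the weak form (\ref{eq:2-8}) sampled at the Crank--Nicolson time levels, splitting each error through an elliptic projection and then closing a single coupled discrete energy inequality by Gronwall's lemma. First I would introduce the Ritz projections $R_h:\mathcal{H}^1_0(\Omega)\to\mathcal{V}^r_h$ associated with the coercive form $(\nabla\cdot,\nabla\cdot)+(V_0\cdot,\cdot)$ and $P_h:\mathbf{H}^1_t(\Omega)\to\mathbf{V}^r_h$ associated with $D(\cdot,\cdot)$, and split
\begin{equation*}
\Psi_h^k-\Psi^k=\theta^k+\eta^k,\qquad \mathbf{A}_h^k-\mathbf{A}^k=\xi^k+\zeta^k,
\end{equation*}
where $\theta^k=\Psi_h^k-R_h\Psi^k$, $\eta^k=R_h\Psi^k-\Psi^k$, $\xi^k=\mathbf{A}_h^k-P_h\mathbf{A}^k$, $\zeta^k=P_h\mathbf{A}^k-\mathbf{A}^k$. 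The projection errors $\eta^k,\zeta^k$ (and the discrete time differences $\partial\eta^k,\partial\zeta^k$ needed below) are $O(h^{r})$ in the energy norm and $O(h^{r+1})$ in $L^2$, by (\ref{eq:2-3}) together with the elliptic $H^2$-regularity granted by the convexity of $\Omega$ and the regularity (\ref{eq:2-10}). Hence (\ref{eq:2-12}) reduces to bounding $\theta^k$ in $\mathcal{H}^1$ and $\xi^k$ in $\mathbf{H}^1_t$; the initial errors $\theta^0,\xi^0,\partial\xi^0$ are $O(h^r)$ by (\ref{eq:2-5}).

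Next I would derive the error equations by inserting the exact solution into (\ref{eq:2-9}), using that the Schr\"odinger scheme is centered at $t^{k-1/2}$ while the Maxwell scheme is centered at $t^{k-1}$. Taylor expansion with integral remainder, controlled by (\ref{eq:2-10}), shows every truncation residual is $O((\Delta t)^2)$ in the pertinent norm, so after squaring and summing over $k$ they contribute $O((\Delta t)^4)$; the Galerkin orthogonality of $R_h,P_h$ removes the principal-part contributions of $\eta,\zeta$, leaving only $L^2$-type $\eta,\zeta$ remainders of size $O(h^{r+1})$ plus the nonlinear differences. The Schr\"odinger error equation reads, schematically,
\begin{equation*}
-\mathrm{i}(\partial\theta^k,\varphi)+\tfrac12 B(\overline{\mathbf{A}}_h^k;\overline{\theta}^k,\varphi)+(V_0\overline{\theta}^k,\varphi)=(\mathcal{R}_\Psi^k,\varphi),
\end{equation*}
with an analogous equation for $\xi^k$ carrying $D(\widetilde{\xi}^k,\mathbf{v})$ on the left and residual $\mathcal{R}_{\mathbf{A}}^k$ on the right, where $\mathcal{R}_\Psi^k,\mathcal{R}_{\mathbf{A}}^k$ collect the consistency, projection and nonlinear terms.

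For the energy estimates I would test the Schr\"odinger error equation first with $\varphi=\overline{\theta}^k$ and take the imaginary part: since $B$ and the $V_0$ term are Hermitian they drop out, and the telescoping identity yields control of $\max_k\|\theta^k\|_{\mathcal{L}^2}^2$. To recover the gradient I would then test with $\varphi=\partial\theta^k$ and take the real part; here $\mathrm{Re}\,[-\mathrm{i}\|\partial\theta^k\|^2]=0$, while $\mathrm{Re}\,[\tfrac12 B(\overline{\mathbf{A}}_h^k;\overline{\theta}^k,\partial\theta^k)]$ telescopes $\|(\mathrm{i}\nabla+\overline{\mathbf{A}}_h^k)\theta^k\|^2$ (up to a commutator term measuring the $k$-variation of $\overline{\mathbf{A}}_h^k$, bounded by $\|\partial\overline{\mathbf{A}}_h^k\|$ times lower-order factors) and the $V_0$ term telescopes $\|\theta^k\|^2$, delivering the $\mathcal{H}^1$ bound. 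Because testing with $\partial\theta^k$ gives no control of the discrete velocity, the pairing $(\mathcal{R}_\Psi^k,\partial\theta^k)$ must be handled by summation by parts in time, transferring the difference onto $\mathcal{R}_\Psi^k-\mathcal{R}_\Psi^{k-1}$ and leaving only the already-controlled $\theta$ on the right. For the hyperbolic Maxwell error equation I would test with $\mathbf{v}=\tfrac12(\partial\xi^k+\partial\xi^{k-1})=(\xi^k-\xi^{k-2})/(2\Delta t)$: the second difference telescopes the kinetic energy $\|\partial\xi^k\|_{\mathbf{L}^2}^2$ and $D(\widetilde{\xi}^k,\cdot)$ telescopes the potential energy $\gamma\|\nabla\cdot\xi^k\|^2+\|\nabla\times\xi^k\|^2$, which are exactly the $\mathbf{H}^1_t$ norms appearing in (\ref{eq:2-12}).

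The main obstacle is to bound the nonlinear differences in $\mathcal{R}_\Psi^k$ and $\mathcal{R}_{\mathbf{A}}^k$ \emph{without} finite element inverse estimates, which is the stated novelty. I would factor each nonlinearity into a smooth, $L^\infty$-bounded exact-solution factor (available from (\ref{eq:2-10})) times an error factor: for the Schr\"odinger coupling this means expanding the difference of the $B$-forms into a drift part built on $\overline{\mathbf{A}}_h^k\!\cdot\!\nabla$ and a potential part built on $|\overline{\mathbf{A}}_h^k|^2$; for Maxwell it means splitting $f(\Psi_h^{k-1},\Psi_h^{k-1})-f(\Psi^{k-1},\Psi^{k-1})$ and $|\Psi_h^{k-1}|^2\tfrac{\overline{\mathbf{A}}_h^k+\overline{\mathbf{A}}_h^{k-1}}{2}-|\Psi^{k-1}|^2\mathbf{A}^{k-1}$ into products of error factors. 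Each such product is estimated by H\"older's inequality using the Sobolev embeddings $\mathcal{H}^1_0(\Omega),\mathbf{H}^1_t(\Omega)\hookrightarrow L^p(\Omega)$ for $p\le 6$ (valid for $d\le 3$), applied to the errors $\theta^{k},\xi^{k}$ themselves, whose $H^1$ norms are precisely what the energy estimates control, rather than to inverse-estimate $L^\infty$ bounds of the discrete functions. Adding the Schr\"odinger and Maxwell inequalities, every coupling term is then absorbed by Young's inequality into the combined energy $E^k:=\|\theta^k\|_{\mathcal{L}^2}^2+\|\nabla\theta^k\|_{\mathbf{L}^2}^2+\|\partial\xi^k\|_{\mathbf{L}^2}^2+\gamma\|\nabla\cdot\xi^k\|^2+\|\nabla\times\xi^k\|^2$ plus the data bound $C(h^{2r}+(\Delta t)^4)$, and the discrete Gronwall lemma, whose applicability is unconditional since no term carries a negative power of $h$, closes the estimate uniformly in $1\le k\le M$. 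Combining with the projection bounds from the first step yields (\ref{eq:2-12}). I expect the delicate point to be arranging the drift term $\overline{\mathbf{A}}_h^k\!\cdot\!\nabla\theta^k$ and the cubic term so that the gradient of $\theta^k$ and the energy norm of $\xi^k$ appearing on the right can be absorbed by the coercive left-hand terms, while the velocity $\partial\xi^k$ is controlled by its own telescoped kinetic energy.
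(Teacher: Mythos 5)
Your overall architecture matches the paper's: the paper also splits the error through a projection (it uses the pointwise interpolants $I_h$, ${\bm \pi}_h$ rather than Ritz projections, which is an inessential difference), tests the Schr\"odinger error equation first with $\overline{\theta}^k$ (imaginary part) and then with $\partial\theta^k$ (real part), tests the Maxwell error equation with $\tfrac12(\partial\xi^k+\partial\xi^{k-1})$, uses summation by parts in time on the residuals, and closes with the discrete Gronwall inequality, relying throughout on the $L^6$ stability bounds of Theorem~\ref{thm3-1} instead of inverse estimates.

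There is, however, one genuine gap, and it sits exactly at the point the paper advertises as its main trick. When you take the real part of $B(\overline{\mathbf{A}}_h^k;\overline{\theta}^k,\partial\theta^k)$, the identity (\ref{eq:3-10}) produces, besides the telescoping term, the contribution
\begin{equation*}
-\Big(f(\theta_{\Psi}^{k-1},\theta_{\Psi}^{k-1}),\tfrac12(\partial \mathbf{A}_{h}^{k}+\partial \mathbf{A}_{h}^{k-1})\Big),
\qquad f(\theta,\theta)=\tfrac{\mathrm{i}}{2}\big(\theta^{\ast}\nabla\theta-\theta\nabla\theta^{\ast}\big),
\end{equation*}
which you describe as a commutator ``bounded by $\|\partial\overline{\mathbf{A}}_h^k\|$ times lower-order factors.'' That bound is not available: the factor $\theta^{\ast}\nabla\theta$ already consumes one $L^2$ (for $\nabla\theta$) and one $L^6$ (for $\theta$), so H\"older would require $\partial\mathbf{A}_h^k\in \mathbf{L}^3$, whereas the stability estimate (\ref{eq:3-14}) only gives a uniform $\mathbf{L}^2$ bound on the discrete velocity; no Sobolev embedding or Young absorption rescues this. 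The paper's resolution (see (\ref{eq:4-49}), (\ref{eq:4-54}), (\ref{eq:4-72}), (\ref{eq:4-75-0}) and (\ref{eq:4-76})) is an exact algebraic cancellation, not an estimate: the nonlinear current difference $f(\Psi_h^{k-1},\Psi_h^{k-1})-f(\Psi^{k-1},\Psi^{k-1})$ in the Maxwell error equation, tested with $\mathbf{v}=\tfrac12(\partial\theta_{\mathbf{A}}^{k}+\partial\theta_{\mathbf{A}}^{k-1})$, produces the term $-\big(f(\theta_{\Psi}^{k-1},\theta_{\Psi}^{k-1}),\mathbf{v}\big)$, and since $\mathbf{v}=\tfrac12(\partial \mathbf{A}_{h}^{k}+\partial \mathbf{A}_{h}^{k-1})-\tfrac12(\partial {\bm\pi}_h\mathbf{A}^{k}+\partial {\bm\pi}_h\mathbf{A}^{k-1})$, adding the two energy identities cancels the dangerous pairing with the discrete velocity identically, leaving only $f(\theta,\theta)$ paired with the smooth interpolated velocity $\partial{\bm\pi}_h\mathbf{A}$, which is bounded in $\mathbf{L}^{\infty}$ and hence controllable by $\|\nabla\theta_\Psi\|_{\mathbf{L}^2}^2$. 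Your phrase ``every coupling term is then absorbed by Young's inequality'' points at the right place (adding the two inequalities) but misidentifies the mechanism; without recognizing the exact cancellation, this step of your argument fails. A similar but milder issue does not arise for the cubic term $\big(\overline{\mathbf{A}}_h|\theta_\Psi|^2,\partial\mathbf{A}_h\big)$, which the paper bounds directly by H\"older with exponents $(6,6,6,2)$ as in (\ref{eq:4-53-0}).
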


The proof of Theorem~\ref{thm2-1} will be given in section~\ref{sec-4}.

\section{ Stability estimates}\label{sec-3}

In this section we derive some stability estimates for the numerical solutions of the full discrete system (\ref{eq:2-9}), which play an important role in the error estimates in the next section.

For convenience, we list the following imbedding inequalities and interpolation inequalities in Sobolev spaces (see, e.g., \cite{Lad} and \cite{Gir}),
and use them in the sequel:
\begin{equation}\label{eq:3-1}
 {\displaystyle \|u\|_{L^p} \leq C \|u\|_{{H}^1}, \quad \|\mathbf{v}\|_{\mathbf{L}^p}\leq C \|\mathbf{v}\|_{\mathbf{H}^1},
 \quad 1\leq p \leq 6 \,\,(d=2,3),}
 \end{equation}
 \begin{equation}\label{eq:3-2}
 {\displaystyle  \|\mathbf{v}\|_{\mathbf{H}^1} \leq C(\|\nabla\times\mathbf{v}\|_{\mathbf{L}^2}+\|\nabla\cdot\mathbf{v}\|_{{L}^2}),\quad
 \mathbf{v}\in \mathbf{H}^{1}_{t}(\Omega),}
 \end{equation}
 \begin{equation}\label{eq:3-3}
 {\displaystyle \|u\|_{{L}^3}^{2}\leq \|u\|_{{L}^2}\|u\|_{{L}^6},}
 \end{equation}
where $ \|u\|_{L^p}=\|u\|_{L^p(\Omega)} $, $ \|u\|_{{H}^1}=\|u\|_{{H}^1(\Omega)} $,
$ \|\mathbf{v}\|_{\mathbf{L}^2}=\|\mathbf{v}\|_{\mathbf{L}^2(\Omega)} $ and
$\|\mathbf{v}\|_{\mathbf{H}^1}=\|\mathbf{v}\|_{\mathbf{H}^1(\Omega)} $.

We first give the definition of the discrete energy functional of (\ref{eq:2-9}) as follows:
\begin{equation}\label{eq:3-8}
\begin{array}{@{}l@{}}
{\displaystyle \mathcal{G}_{h}^{k}=\frac{1}{2}B(\overline{\mathbf{A}}_{h}^{k};\Psi_{h}^{k},\Psi_{h}^{k})+\frac{1}{4}D(\mathbf{A}_{h}^{k},\mathbf{A}_{h}^{k})
+\frac{1}{4}D(\mathbf{A}_{h}^{k-1},\mathbf{A}_{h}^{k-1})+V_0\|\Psi_{h}^{k}\|_{\mathcal{L}^2}^{2} }\\[2mm]
{\displaystyle \quad+\frac{1}{2} \|\partial \mathbf{A}_{h}^{k}\|_{\mathbf{L}^2}^{2}=\frac{1}{2}B(\overline{\mathbf{A}}_{h}^{k};\Psi_{h}^{k},\Psi_{h}^{k})
+\frac{1}{2}\overline{D(\mathbf{A}_{h}^{k},\mathbf{A}_{h}^{k})}+V_0\|\Psi_{h}^{k}\|_{\mathcal{L}^2}^{2} + \frac{1}{2}\|\partial \mathbf{A}_{h}^{k}\|_{\mathbf{L}^2}^{2},}
\end{array}
\end{equation}
where $ \overline{D(\mathbf{A}_{h}^{k},\mathbf{A}_{h}^{k})}=\frac{\displaystyle 1}{\displaystyle 2}\Big[
D(\mathbf{A}_{h}^{k},\mathbf{A}_{h}^{k})+D(\mathbf{A}_{h}^{k-1},\mathbf{A}_{h}^{k-1})\Big] $.

\begin{lemma}\label{lem3-2}
For the solution of (\ref{eq:2-9}), for $k=1,2\cdots,M $, we have
\begin{equation}\label{eq:3-9}
{\displaystyle {\|\Psi_{h}^{k}\|}_{\mathcal{L}^2}^{2}={\|\Psi_{h}^{0}\|}_{\mathcal{L}^2}^{2},\quad \mathcal{G}_{h}^{k}\leq C,}
\end{equation}
where C is a constant independent of k, $h$ and $\Delta t $.
\end{lemma}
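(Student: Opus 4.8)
The plan is to prove the two assertions in turn, deriving the $\mathcal{L}^2$-conservation first and then using it to establish the energy bound.

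\textbf{Step 1 ($\mathcal{L}^2$-conservation).} First I would test the discrete Schr\"{o}dinger equation in (\ref{eq:2-9}) with $\varphi=\overline{\Psi}_h^k\in\mathcal{V}_h^r$ and take imaginary parts. Since $B(\overline{\mathbf{A}}_h^k;\overline{\Psi}_h^k,\overline{\Psi}_h^k)=\|(\mathrm{i}\nabla+\overline{\mathbf{A}}_h^k)\overline{\Psi}_h^k\|^2$ and $(V_0\overline{\Psi}_h^k,\overline{\Psi}_h^k)=V_0\|\overline{\Psi}_h^k\|^2$ are real, the only surviving contribution is the time-difference term, and
\[\mathrm{Im}\big[-\mathrm{i}(\partial\Psi_h^k,\overline{\Psi}_h^k)\big]=-\tfrac{1}{2\Delta t}\big(\|\Psi_h^k\|_{\mathcal{L}^2}^2-\|\Psi_h^{k-1}\|_{\mathcal{L}^2}^2\big),\]
the cross terms in the complex inner product being purely imaginary. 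Setting this to zero gives $\|\Psi_h^k\|_{\mathcal{L}^2}^2=\|\Psi_h^{k-1}\|_{\mathcal{L}^2}^2$, and induction yields the first identity.

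\textbf{Step 2 (telescoping identities).} For the energy bound I would next test the Schr\"{o}dinger equation with $\varphi=\partial\Psi_h^k$ and take the real part. Because the magnetic potential is frozen at $\overline{\mathbf{A}}_h^k$, the form $B(\overline{\mathbf{A}}_h^k;\cdot,\cdot)$ is Hermitian, so $\mathrm{Re}\,B(\overline{\mathbf{A}}_h^k;\overline{\Psi}_h^k,\partial\Psi_h^k)=\tfrac{1}{2\Delta t}\big[B(\overline{\mathbf{A}}_h^k;\Psi_h^k,\Psi_h^k)-B(\overline{\mathbf{A}}_h^k;\Psi_h^{k-1},\Psi_h^{k-1})\big]$; combined with Step~1 (which annihilates the $V_0$ term) this shows the frozen-potential quantity $B(\overline{\mathbf{A}}_h^k;\cdot,\cdot)$ is conserved across a step. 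Then I would test the Maxwell equation with $\mathbf{v}=\partial\mathbf{A}_h^k+\partial\mathbf{A}_h^{k-1}=(\mathbf{A}_h^k-\mathbf{A}_h^{k-2})/\Delta t$: the inertial term gives $\tfrac{1}{\Delta t}(\|\partial\mathbf{A}_h^k\|^2-\|\partial\mathbf{A}_h^{k-1}\|^2)$ and, since $\widetilde{\mathbf{A}}_h^k=(\mathbf{A}_h^k+\mathbf{A}_h^{k-2})/2$, the stiffness term gives $\tfrac{1}{2\Delta t}\big(D(\mathbf{A}_h^k,\mathbf{A}_h^k)-D(\mathbf{A}_h^{k-2},\mathbf{A}_h^{k-2})\big)$, which match exactly the field contributions $M_1^k-M_1^{k-1}$ and $M_2^k-M_2^{k-1}$ in $\mathcal{G}_h^k-\mathcal{G}_h^{k-1}$.

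\textbf{Step 3 (the exact cancellation).} The crux is the leftover from the Schr\"{o}dinger side, namely the potential mismatch $R^k=\tfrac12\big[B(\overline{\mathbf{A}}_h^k;\Psi_h^{k-1},\Psi_h^{k-1})-B(\overline{\mathbf{A}}_h^{k-1};\Psi_h^{k-1},\Psi_h^{k-1})\big]$. Expanding $B(\mathbf{A};\Psi,\Psi)=\|\nabla\Psi\|^2+2(f(\Psi,\Psi),\mathbf{A})+(|\Psi|^2\mathbf{A},\mathbf{A})$ and using $\overline{\mathbf{A}}_h^k-\overline{\mathbf{A}}_h^{k-1}=\tfrac{\Delta t}{2}(\partial\mathbf{A}_h^k+\partial\mathbf{A}_h^{k-1})$, one finds that $R^k$ equals precisely $\tfrac{\Delta t}{2}$ times the nonlinear terms $(f(\Psi_h^{k-1},\Psi_h^{k-1}),\mathbf{v})+\big(|\Psi_h^{k-1}|^2\tfrac{\overline{\mathbf{A}}_h^k+\overline{\mathbf{A}}_h^{k-1}}{2},\mathbf{v}\big)$ generated by the Maxwell test. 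These cancel identically — this is exactly the trick that eliminates the nonlinear terms — leaving the clean discrete energy identity
\[\mathcal{G}_h^k-\mathcal{G}_h^{k-1}=\tfrac{\Delta t}{2}\big(\mathbf{g}^{k-1},\partial\mathbf{A}_h^k+\partial\mathbf{A}_h^{k-1}\big).\]

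\textbf{Step 4 (closing via Gronwall).} Finally I would sum over $k$, bound the forcing by Cauchy--Schwarz and Young's inequality as $\tfrac{\Delta t}{2}|(\mathbf{g}^{j-1},\partial\mathbf{A}_h^j+\partial\mathbf{A}_h^{j-1})|\le C\Delta t\|\mathbf{g}^{j-1}\|_{\mathbf{L}^2}^2+C\Delta t(\|\partial\mathbf{A}_h^j\|_{\mathbf{L}^2}^2+\|\partial\mathbf{A}_h^{j-1}\|_{\mathbf{L}^2}^2)$, and observe that every term of $\mathcal{G}_h^k$ except the conserved constant $V_0\|\Psi_h^0\|_{\mathcal{L}^2}^2$ is nonnegative, so $\|\partial\mathbf{A}_h^j\|_{\mathbf{L}^2}^2\le 2\mathcal{G}_h^j+C$. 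Since $\mathbf{g}\in C(0,T;\mathbf{L}^2(\Omega))$ the forcing sum is bounded, and $\mathcal{G}_h^0\le C$ follows from the interpolation estimates (\ref{eq:2-3}) applied to the consistent initial data (\ref{eq:2-5}). A discrete Gronwall inequality then yields $\mathcal{G}_h^k\le C$ uniformly in $k$, $h$, $\Delta t$.

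I expect the main obstacle to be the bilinear-form bookkeeping in Step~3: carefully tracking which magnetic potential is frozen in each copy of $B$ and recognizing that the mismatch $R^k$ is exactly $\tfrac{\Delta t}{2}$ times the Maxwell nonlinear terms. This exact cancellation is what makes the scheme unconditionally stable; without it one would have to estimate the nonlinear terms directly, which forces either finite element inverse estimates or a time-step restriction.
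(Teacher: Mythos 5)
Your proposal is correct and follows essentially the same route as the paper: testing with $\overline{\Psi}_h^k$ for the $\mathcal{L}^2$ identity, with $\partial\Psi_h^k$ and $\tfrac{1}{2\Delta t}(\mathbf{A}_h^k-\mathbf{A}_h^{k-2})$ for the energy, exploiting the exact cancellation of the nonlinear terms via the identity $(\ref{eq:3-10})$, and closing with the discrete Gronwall inequality. Your reorganization of the Schr\"{o}dinger-side identity into a frozen-potential conservation plus the mismatch $R^k$ is only a cosmetic repackaging of the paper's equation (\ref{eq:3-11}), and your bookkeeping is consistent.
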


\begin{proof}
Choosing $\varphi=\overline{\Psi}_{h}^{k}$ in $(\ref{eq:2-9})_1 $ and taking the imaginary part, we can complete the proof of $(\ref{eq:3-9})_1 $.
Let us turn to the proof of $(\ref{eq:3-9})_2 $. It is obvious that
\begin{equation*}
\begin{array}{@{}l@{}}
{\displaystyle \mathrm{Re}\left[B\left(\overline{\mathbf{A}}_{h}^{k};\overline{\Psi}_{h}^{k},\partial \Psi_{h}^{k}\right)\right]
=\frac{1}{2}\partial B(\overline{\mathbf{A}}_{h}^{k};{\Psi}_{h}^{k},\Psi_{h}^{k}) }\\[2mm]
{\displaystyle \quad\quad+ \frac{1}{2\Delta t}\left[B(\overline{\mathbf{A}}_{h}^{k-1};{\Psi}_{h}^{k-1},\Psi_{h}^{k-1})
-B(\overline{\mathbf{A}}_{h}^{k};{\Psi}_{h}^{k-1},\Psi_{h}^{k-1})\right]}\\[2mm]
{\displaystyle \quad\quad+ \frac{1}{2 \Delta t}\mathrm{Re}\left[B(\overline{\mathbf{A}}_{h}^{k};\Psi_{h}^{k-1},\Psi_{h}^{k})
-B(\overline{\mathbf{A}}_{h}^{k};\Psi_{h}^{k},\Psi_{h}^{k-1})\right].}
\end{array}
\end{equation*}

By a direct computation, we get
\begin{equation}\label{eq:3-10}
\begin{array}{@{}l@{}}
{\displaystyle B(\mathbf{A};\psi,\varphi)=(\nabla\psi,\nabla\varphi)+(\mathbf{A}\psi,\mathbf{A}\varphi)+2(f(\psi,\varphi),\mathbf{A}),}\\[2mm]
{\displaystyle B(\mathbf{A};\psi,\varphi)-B(\hat{\mathbf{A}};\psi,\varphi)=\left((\mathbf{A}
+\hat{\mathbf{A}})\psi \varphi^{*},\mathbf{A}-\hat{\mathbf{A}}\right)
+2(f(\psi,\varphi),\mathbf{A}-\hat{\mathbf{A}}),}
\end{array}
\end{equation}
and consequently
\begin{equation*}
{\displaystyle\mathrm{Re}\left[B(\overline{\mathbf{A}}_{h}^{k};\Psi_{h}^{k-1},\Psi_{h}^{k})-B(\overline{\mathbf{A}}_{h}^{k};\Psi_{h}^{k},\Psi_{h}^{k-1})\right] = 0.}
\end{equation*}

We thus have
\begin{equation}\label{eq:3-11}
\begin{array}{@{}l@{}}
{\displaystyle \mathrm{Re}\left[B\left(\overline{\mathbf{A}}_{h}^{k};\overline{\Psi}_{h}^{k},\partial \Psi_{h}^{k}\right)\right]
=-\left(|\Psi_{h}^{k-1}|^{2}\frac{\overline{\mathbf{A}}_{h}^{k}+\overline{\mathbf{A}}_{h}^{k-1}}{2},\frac{\overline{\mathbf{A}}_{h}^{k}
-\overline{\mathbf{A}}_{h}^{k-1}}{\Delta t}\right)
}\\[2mm]
{\displaystyle \quad+\frac{1}{2}\partial B(\overline{\mathbf{A}}_{h}^{k};{\Psi}_{h}^{k},\Psi_{h}^{k})-\left(f(\Psi_{h}^{k-1},\Psi_{h}^{k-1}),\frac{\overline{\mathbf{A}}_{h}^{k}
-\overline{\mathbf{A}}_{h}^{k-1}}{\Delta t}\right).}
\end{array}
\end{equation}

It is not difficult to check that
\begin{equation}\label{eq:3-12}
{\displaystyle \mathrm{Re}\left[V_0(\overline{\Psi}_{h}^{k},\partial \Psi_{h}^{k})\right]=\frac{V_{0}}{2}\partial(\Psi_{h}^{k},\Psi_{h}^{k}).}
\end{equation}

We choose $\varphi=\partial\Psi_{h}^{k}$ in $ (\ref{eq:2-9})_1 $ and take the real part. Combining (\ref{eq:3-11}) and (\ref{eq:3-12}) gives
\begin{equation}\label{eq:3-13}
\begin{array}{@{}l@{}}
{\displaystyle \frac{1}{2}\partial\|\left(\mathrm{i}\nabla+\overline{\mathbf{A}}_{h}^{k}\right)\Psi_{h}^{k}\|_{\mathbf{L}^2}^{2}+V_{0}\partial \|\Psi_{h}^{k}\|_{\mathcal{L}^2}^{2} -\left(|\Psi_{h}^{k-1}|^{2}\frac{\overline{\mathbf{A}}_{h}^{k}+\overline{\mathbf{A}}_{h}^{k-1}}{2},\frac{\overline{\mathbf{A}}_{h}^{k}
-\overline{\mathbf{A}}_{h}^{k-1}}{\Delta t}\right) }\\[2mm]
{\displaystyle \quad -\left(f(\Psi_{h}^{k-1},\Psi_{h}^{k-1}),\frac{\overline{\mathbf{A}}_{h}^{k}-\overline{\mathbf{A}}_{h}^{k-1}}{\Delta t}\right)=0.}
\end{array}
\end{equation}

Taking $\mathbf{v}=\frac{\displaystyle 1}{\displaystyle 2\Delta t}(\mathbf{A}_{h}^{k}-\mathbf{A}_{h}^{k-2})=\frac{\displaystyle 1}
{\displaystyle \Delta t}
(\overline{\mathbf{A}}_{h}^{k}-\overline{\mathbf{A}}_{h}^{k-1})=\frac{\displaystyle 1}{\displaystyle 2}(\partial \mathbf{A}_{h}^{k}+\partial \mathbf{A}_{h}^{k-1})$
in $ (\ref{eq:2-9})_2 $, and combining with (\ref{eq:3-13}), we get
\begin{equation*}
\begin{array}{@{}l@{}}
{\displaystyle \partial \left(\frac{1}{2} B(\overline{\mathbf{A}}_{h}^{k};\Psi_{h}^{k},\Psi_{h}^{k})+V_0\|\Psi_{h}^{k}\|_{\mathcal{L}^2}^{2}
 +\frac{1}{2}\|\partial \mathbf{A}_{h}^{k}\|_{\mathbf{L}^2}^{2}\right)}\\[2mm]
{\displaystyle \quad+\partial\left(\frac{1}{4}D(\mathbf{A}_{h}^{k},\mathbf{A}_{h}^{k})
+\frac{1}{4}D(\mathbf{A}_{h}^{k-1},\mathbf{A}_{h}^{k-1})\right)=(\mathbf{g}^{k-1},\frac{1}{2}(\partial \mathbf{A}_{h}^{k}+\partial \mathbf{A}_{h}^{k-1})) .}
\end{array}
\end{equation*}
It follows that
\begin{equation}\label{eq:3-13-0}
\begin{array}{@{}l@{}}
{\displaystyle \partial \mathcal{G}_{h}^{k} \leq \Vert \mathbf{g}^{k-1} \Vert_{\mathbf{L}^{2}} \Vert
\frac{1}{2}(\partial \mathbf{A}_{h}^{k}+\partial \mathbf{A}_{h}^{k-1})\Vert_{\mathbf{L}^{2}} }\\[2mm]
{\displaystyle \qquad \leq C\left(\Vert \mathbf{g}^{k-1} \Vert_{\mathbf{L}^{2}}^{2} + \Vert
\partial \mathbf{A}_{h}^{k}\Vert_{\mathbf{L}^{2}}^{2} + \Vert
\partial \mathbf{A}_{h}^{k-1}\Vert_{\mathbf{L}^{2}}^{2}\right).}
\end{array}
\end{equation}
 Multiply (\ref{eq:3-13-0}) by $\Delta t$, sum $k=1,2,\cdots,M$, to discover
\begin{equation*}
\begin{array}{@{}l@{}}
{\displaystyle \mathcal{G}_{h}^{M} \leq \mathcal{G}_{h}^{0} + C\Delta t \sum_{k=0}^{M-1} \Vert \mathbf{g}^{k-1} \Vert_{\mathbf{L}^{2}}^{2} + C \Delta t \sum_{k=0}^{M}\mathcal{G}_{h}^{k}}.
\end{array}
\end{equation*}

Now  $(\ref{eq:3-9})_2 $ follows from the discrete Gronwall's inequality and  thus we complete the proof of Lemma~\ref{lem3-2}. \qquad\end{proof}

\begin{rem}\label{rem3-2}
 Lemma~\ref{lem3-2} shows that the numerical scheme presented in this paper for  the modified Maxwell-Schr\"{o}dinger equations (\ref{eq:2-8}) is stable in some senses.
\end{rem}

\begin{theorem}\label{thm3-1}
The solution of the full discrete system (\ref{eq:2-9}) fulfills the following estimates
\begin{equation}\label{eq:3-14}
\begin{array}{@{}l@{}}
{\displaystyle\|\left(\mathrm{i}\nabla+\overline{\mathbf{A}}_{h}^{k}\right)\Psi_{h}^{k}\|_{\mathbf{L}^2}
+\|\Psi_{h}^{k}\|_{\mathcal{L}^2}+\|\mathbf{A}_{h}^{k}\|_{\mathbf{L}^2}+\|\partial \mathbf{A}_{h}^{k}\|_{\mathbf{L}^2}}\\[2mm]
{\displaystyle\quad +\|\nabla \times\mathbf{A}_{h}^{k}\|_{\mathbf{L}^2}
+\gamma\|\nabla \cdot\mathbf{A}_{h}^{k}\|_{{L}^2}\leq C,}
\end{array}
\end{equation}
 and
\begin{equation}\label{eq:3-15}
{\displaystyle\|\mathbf{A}_{h}^{k}\|_{\mathbf{H}^1}
+\|\mathbf{A}_{h}^{k}\|_{\mathbf{L}^6}
+\|\Psi_{h}^{k}\|_{\mathcal{H}^1}+\|\Psi_{h}^{k}\|_{\mathcal{L}^6}\leq C,}
\end{equation}
where $ C $ is a constant independent of $ h $, $ \Delta t $.
\end{theorem}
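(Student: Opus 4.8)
The plan is to read off almost all of (\ref{eq:3-14}) directly from Lemma~\ref{lem3-2}, and then to upgrade these $\mathbf{L}^2$-type bounds to the full $\mathbf{H}^1$ and $\mathbf{L}^6$ estimates in (\ref{eq:3-15}) using the embedding inequalities (\ref{eq:3-1})--(\ref{eq:3-3}) together with a self-improving (Young) argument. First, the three nontrivial summands of the discrete energy $\mathcal{G}_h^k$ are nonnegative, while the mass term equals the fixed constant $V_0\|\Psi_h^0\|_{\mathcal{L}^2}^2$ by $(\ref{eq:3-9})_1$; hence the bound $\mathcal{G}_h^k\le C$ of Lemma~\ref{lem3-2} gives at once
\[
\|(\mathrm{i}\nabla+\overline{\mathbf{A}}_h^k)\Psi_h^k\|_{\mathbf{L}^2}^2 = B(\overline{\mathbf{A}}_h^k;\Psi_h^k,\Psi_h^k)\le C, \qquad D(\mathbf{A}_h^k,\mathbf{A}_h^k)\le C, \qquad \|\partial\mathbf{A}_h^k\|_{\mathbf{L}^2}\le C,
\]
and $\|\Psi_h^k\|_{\mathcal{L}^2}=\|\Psi_h^0\|_{\mathcal{L}^2}\le C$ is exactly $(\ref{eq:3-9})_1$. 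Recalling the definition of $D$ in (\ref{eq:2-7}), the second of these is precisely control of $\|\nabla\times\mathbf{A}_h^k\|_{\mathbf{L}^2}$ and $\gamma\|\nabla\cdot\mathbf{A}_h^k\|_{L^2}$.

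The only term of (\ref{eq:3-14}) not supplied directly by the energy is $\|\mathbf{A}_h^k\|_{\mathbf{L}^2}$. To recover it I would use the telescoping identity $\mathbf{A}_h^k=\mathbf{A}_h^0+\Delta t\sum_{j=1}^k\partial\mathbf{A}_h^j$, take $\mathbf{L}^2$ norms, and estimate $\Delta t\sum_{j=1}^k\|\partial\mathbf{A}_h^j\|_{\mathbf{L}^2}\le (\Delta t\,k)\max_j\|\partial\mathbf{A}_h^j\|_{\mathbf{L}^2}\le TC$, where $\Delta t\,k\le T$ and the maximum is bounded by the previous step; together with the boundedness of the interpolated datum $\mathbf{A}_h^0={\bm\pi}_h\mathbf{A}_0$ this closes (\ref{eq:3-14}). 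The vector-potential part of (\ref{eq:3-15}) is then immediate: since $\mathbf{A}_h^k\in\mathbf{V}_h^r\subset\mathbf{H}_t^1(\Omega)$, the norm equivalence (\ref{eq:3-2}) turns the curl and divergence bounds into $\|\mathbf{A}_h^k\|_{\mathbf{H}^1}\le C$, and the Sobolev embedding (\ref{eq:3-1}) yields $\|\mathbf{A}_h^k\|_{\mathbf{L}^6}\le C$.

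The delicate point, and the main obstacle, is the Schr\"odinger estimate $\|\Psi_h^k\|_{\mathcal{H}^1}\le C$, because the energy only controls the gauge-covariant quantity $\|(\mathrm{i}\nabla+\overline{\mathbf{A}}_h^k)\Psi_h^k\|_{\mathbf{L}^2}$, not $\|\nabla\Psi_h^k\|_{\mathbf{L}^2}$ itself. I would split
\[
\mathrm{i}\nabla\Psi_h^k=(\mathrm{i}\nabla+\overline{\mathbf{A}}_h^k)\Psi_h^k-\overline{\mathbf{A}}_h^k\Psi_h^k,
\]
so that, writing $x:=\|\nabla\Psi_h^k\|_{\mathbf{L}^2}$ and applying H\"older together with the already-proven $\mathbf{L}^6$-bound on $\overline{\mathbf{A}}_h^k=(\mathbf{A}_h^k+\mathbf{A}_h^{k-1})/2$,
\[
x\le C+\|\overline{\mathbf{A}}_h^k\Psi_h^k\|_{\mathbf{L}^2}\le C+\|\overline{\mathbf{A}}_h^k\|_{\mathbf{L}^6}\|\Psi_h^k\|_{\mathcal{L}^3}\le C+C\|\Psi_h^k\|_{\mathcal{L}^3}.
\]
Bounding $\|\Psi_h^k\|_{\mathcal{L}^3}$ crudely by $\|\Psi_h^k\|_{\mathcal{H}^1}$ would be circular; instead I would exploit the sublinear interpolation (\ref{eq:3-3}), $\|\Psi_h^k\|_{\mathcal{L}^3}^2\le\|\Psi_h^k\|_{\mathcal{L}^2}\|\Psi_h^k\|_{\mathcal{L}^6}$, then (\ref{eq:3-1}) and $\|\Psi_h^k\|_{\mathcal{L}^2}\le C$ to get $\|\Psi_h^k\|_{\mathcal{L}^3}^2\le C\|\Psi_h^k\|_{\mathcal{L}^6}\le C(1+x)$. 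Substituting gives $x\le C+Cx^{1/2}$, and a Young inequality $Cx^{1/2}\le\tfrac12 x+C$ absorbs the right-hand power of $x$, yielding $x\le C$ and hence $\|\Psi_h^k\|_{\mathcal{H}^1}\le C$; finally $\|\Psi_h^k\|_{\mathcal{L}^6}\le C\|\Psi_h^k\|_{\mathcal{H}^1}\le C$ by (\ref{eq:3-1}), completing (\ref{eq:3-15}). This sublinear-interpolation-plus-Young device, which uses no finite element inverse estimate, is exactly the mechanism that lets the bound close uniformly in $h$ and $\Delta t$.
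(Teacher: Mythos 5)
Your proposal is correct and follows essentially the same route as the paper: read the term-by-term bounds off the discrete energy of Lemma~\ref{lem3-2} (using mass conservation to handle the $V_0$ term), pass to $\|\mathbf{A}_h^k\|_{\mathbf{H}^1}$ and $\|\mathbf{A}_h^k\|_{\mathbf{L}^6}$ via the norm equivalence (\ref{eq:3-2}) and the embedding (\ref{eq:3-1}), and then recover $\|\nabla\Psi_h^k\|_{\mathbf{L}^2}$ from the gauge-covariant derivative by the interpolation inequality (\ref{eq:3-3}) plus Young absorption. Your telescoping argument for $\|\mathbf{A}_h^k\|_{\mathbf{L}^2}$ is a harmless variant (the paper gets it directly from (\ref{eq:3-2})), and your use of $\overline{\mathbf{A}}_h^k$ rather than $\mathbf{A}_h^k$ in the absorption step is, if anything, slightly more faithful to what the energy actually controls.
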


\begin{proof}
(\ref{eq:3-14}) is the direct result of Lemma ~\ref{lem3-2}. Next we give the proof of (\ref{eq:3-15}).
Since the semi-norm in $\mathbf{H}_t^{1}(\Omega)$ is equivalent to $\mathbf{H}^{1}$-norm, from (\ref{eq:3-14}) we get
\begin{equation}\label{eq:3-16}
\Vert \mathbf{A}^{k}_h\|_{\mathbf{H}^1} < C.
\end{equation}
Then Sobolev's imbedding theorem implies that
\begin{equation}\label{eq:3-17}
\Vert \mathbf{A}^{k}_h\|_{\mathbf{L}^p} < C ,
\end{equation}
with $ \, 1\leq p \leq 6$ for $d = 3$ and $ 1\leq p < \infty $ for $ d=2$.

Using  Young's inequality and the interpolation inequalities (\ref{eq:3-3}), we further prove
\begin{equation*}
\begin{array}{@{}l@{}}
{\displaystyle \|{\mathbf{A}}^{k}_{h}\Psi^{k}_{h}\|_{\mathbf{L}^2}\leq \|{\mathbf{A}}^{k}_{h}\|_{\mathbf{L}^6} \|\Psi^{k}_{h}\|_{\mathcal{L}^3} \leq C \|\Psi^{k}_{h}\|_{\mathcal{L}^3}
}\\[2mm]
{\displaystyle \quad\leq C\|\Psi^{k}_{h}\|_{\mathcal{L}^2}^{\frac{1}{2}}\|\Psi^{k}_{h}\|_{\mathcal{L}^6}^{\frac{1}{2}} \leq C\|\Psi^{k}_{h}\|_{\mathcal{L}^2}^{\frac{1}{2}}\|\nabla\Psi^{k}_{h}\|_{\mathbf{L}^2}^{\frac{1}{2}}
\leq C+\frac{1}{2}\|\nabla\Psi^{k}_{h}\|_{\mathbf{L}^2}}.
\end{array}
\end{equation*}

Hence we have
\begin{equation*}
\begin{array}{@{}l@{}}
{\displaystyle \|\nabla\Psi^{k}_{h}\|_{\mathbf{L}^2}\leq \|\left(\mathrm{i}\nabla
+{\mathbf{A}}^{k}_{h}\right)\Psi^{k}_{h}\|_{\mathbf{L}^2}
+ \|{\mathbf{A}}^{k}_{h}\Psi^{k}_{h}\|_{\mathbf{L}^2}
\leq C+\frac{1}{2}\|\nabla\Psi^{k}_{h}\|_{\mathbf{L}^2}.}
\end{array}
\end{equation*}
Consequently, we obtain
\begin{equation}\label{eq:3-18}
\|\Psi^{k}_{h}\|_{\mathcal{H}_{0}^1} + \|\Psi^{k}_{h}\|_{\mathcal{L}^6} \leq C.
\end{equation}
Combining (\ref{eq:3-16}), (\ref{eq:3-17}) and (\ref{eq:3-18}), we complete the proof of (\ref{eq:3-15}). $\qquad$\end{proof}

\section{The error estimates}\label{sec-4}

In this section, we will give the proof of Theorem~\ref{thm2-1}.
Let $(I_{h}\Psi, {\bm \pi}_{h}\mathbf{A})$ denote the interpolation functions of $(\Psi, \mathbf{A})$
in $\mathcal{V}_{h}^{r}\times\mathbf{V}_{h}^{r}$. Set
$e_{\Psi}=I_{h}\Psi-\Psi$,  $e_{\mathbf{A}}={\bm \pi}_h\mathbf{A}-\mathbf{A}$.
By applying the interpolation error estimates (\ref{eq:2-3}) and the regularity assumptions (\ref{eq:2-10}), we have
\begin{equation}\label{eq:4-1}
\begin{array}{@{}l@{}}
{\displaystyle \|e_{\Psi}\|_{\mathcal{L}^2}+h\|e_{\Psi}\|_{\mathcal{H}^1}\leq Ch^{r+1},\quad  \|e_{\mathbf{A}}\|_{\mathbf{L}^2}
+h\|e_{\mathbf{A}}\|_{\mathbf{H}^1}\leq C h^{r+1},}\\[2mm]
{\displaystyle
\|I_{h}\Psi\|_{\mathcal{L}^{\infty}}+\|{\bm \pi}_h\mathbf{A}\|_{\mathbf{H}^{1}}
 +\|\nabla I_{h}\Psi \|_{\mathbf{L}^{3}}\leq C,}
\end{array}
\end{equation}
where $ C $ is a constant independent of $ h $.

For convenience, we give the following identities, which will be used frequently in the sequel.
\begin{equation}\label{eq:4-2}
\begin{array}{@{}l@{}}
{\displaystyle \sum_{k=1}^{M}{(a_{k}-a_{k-1})b_{k}}=a_{M}b_{M}-a_{0}b_{1}-\sum_{k=1}^{M-1}{a_{k}(b_{k+1}-b_{k})},}\\[2mm]
{\displaystyle \sum_{k=1}^{M}{(a_{k}-a_{k-1})b_{k}}=a_{M}b_{M}-a_{0}b_{0}-\sum_{k=1}^{M}{a_{k-1}(b_{k}-b_{k-1})}.}
\end{array}
\end{equation}

Let $\theta_{\Psi}^{k}=\Psi_{h}^{k}-I_{h}\Psi^{k} $, $\theta_{\mathbf{A}}^{k}=\mathbf{A}_{h}^{k}-{\bm \pi}_{h}\mathbf{A}^{k}$.
By using the error estimates of the interpolation operators, we only need to estimate $\theta_{\Psi}^{k}$ and $\theta_{\mathbf{A}}^{k}$.
Subtracting (\ref{eq:2-8}) from (\ref{eq:2-9}), we get the following equations for $\theta_{\Psi}^{k}$ and $\theta_{\mathbf{A}}^{k}$:
\begin{equation}\label{eq:4-3}
\begin{array}{@{}l@{}}
{\displaystyle -2\mathrm{i}(\partial \theta^{k}_{\Psi},\varphi)+B\left(\overline{\mathbf{A}}^{k}_{h};\overline{\theta}_{\Psi}^{k},\varphi\right)
= 2\mathrm{i}\left(\partial I_{h}\Psi^{k} - (\Psi_{t})^{k-\frac{1}{2}},\varphi\right)}\\[2mm]
{\displaystyle \quad+2V_0\left(\Psi^{k-\frac{1}{2}}-\overline{\Psi}_{h}^{k},\varphi\right)+ B(\mathbf{A}^{k-\frac{1}{2}};(\Psi^{k-\frac{1}{2}}-I_{h}\overline{\Psi}^{k}),\varphi)}\\[2mm]
{\displaystyle \quad+\left(B(\mathbf{A}^{k-\frac{1}{2}};I_{h}\overline{\Psi}^{k},\varphi)-B(\overline{\mathbf{A}}^{k}_{h}; I_{h}\overline{\Psi}^{k},\varphi)\right),\quad \forall \varphi\in\mathcal{V}_{h}^{r},}
\end{array}
\end{equation}
and
\begin{equation}\label{eq:4-4}
\begin{array}{@{}l@{}}
{\displaystyle \left(\partial^{2}\theta^{k}_{\mathbf{A}},\mathbf{v}\right)+D(\widetilde{\theta^{k}_{\mathbf{A}}},\mathbf{v})
= \left((\mathbf{A}_{tt})^{k-1}-\partial^{2} {\bm \pi}_{h}\mathbf{A}^{k},\mathbf{v}\right)}\\[2mm]
{\displaystyle \quad +D(\mathbf{A}^{k-1}-\widetilde{{\bm \pi}_{h}\mathbf{A}^{k}},\mathbf{v})
+ \left(|\Psi^{k-1}|^{2}\mathbf{A}^{k-1}-|\Psi^{k-1}_{h}|^{2}\frac{\overline{\mathbf{A}}^{k}_{h}+\overline{\mathbf{A}}^{k-1}_{h}}{2},
\;\mathbf{v}\right)}\\[2mm]
{\displaystyle \quad+ \left(f(\Psi^{k-1},\Psi^{k-1})-f(\Psi^{k-1}_{h},\Psi^{k-1}_{h}),\;\mathbf{v}\right),\quad \forall\mathbf{v}\in\mathbf{V}^{r}_{h},}
\end{array}
\end{equation}
where $ \overline{\mathbf{A}}_h^k $, $ \overline{\Psi}_h^k $, $ \widetilde{\theta^{k}_{\mathbf{A}}}$
and $ \widetilde{{\bm \pi}_{h}\mathbf{A}^{k}} $ are similarly given in (\ref{eq:2-2}).

Now we briefly describe the outline of the proof of (\ref{eq:2-12}). First, we take $\varphi=\overline{\theta}_{\Psi}^{k}$ in (\ref{eq:4-3}) and
 obtain the estimate of $ \|\theta_\Psi^M\|_{\mathcal{L}^2} $. Second, we choose $\varphi=\theta_{\Psi}^{k}-\theta_{\Psi}^{k-1}$ in (\ref{eq:4-3})
 and derive the energy-norm estimate for $\theta^{M}_{\Psi}$. Finally, let $\mathbf{v}
  =\frac{\displaystyle 1}{\displaystyle 2\Delta t}(\theta_{\mathbf{A}}^{k}-\theta_{\mathbf{A}}^{k-2})$ in (\ref{eq:4-4})
  and acquire the estimate involving $\theta_{\mathbf{A}}^k $.  Combining the above three estimates, we will complete the proof of (\ref{eq:2-12}).

\subsection{Estimates for (\ref{eq:4-3})}

To begin with, choosing $\varphi=\overline{\theta}_{\Psi}^{k}$, $ k=1,2,\cdots, M $ as the test function in (\ref{eq:4-3}), we get
 \begin{equation}\label{eq:4-5}
 {\displaystyle 2\mathrm{i}(\partial \theta^{k}_{\Psi},\overline{\theta}_{\Psi}^{k})-B\left(\overline{\mathbf{A}}^{k}_{h};\overline{\theta}_{\Psi}^{k},\overline{\theta}_{\Psi}^{k}\right)
 =I_1^{(k)}+I_2^{(k)}+I_3^{(k)}+I_4^{(k)},}
 \end{equation}
 where
 \begin{equation*}
 \begin{array}{@{}l@{}}
 {\displaystyle I_1^{(k)}=2\mathrm{i}\left((\Psi_t)^{k-\frac{1}{2}}-\partial I_{h}\Psi^{k},\overline{\theta}_{\Psi}^{k}\right),\quad I_2^{(k)}=2V_0\left(\overline{\Psi}_{h}^{k}-\Psi^{k-\frac{1}{2}},\overline{\theta}_{\Psi}^{k}\right),}\\[2mm]
 {\displaystyle I_3^{(k)}=B(\mathbf{A}^{k-\frac{1}{2}};(I_{h}\overline{\Psi}^{k}-\Psi^{k-\frac{1}{2}}),\overline{\theta}_{\Psi}^{k}),
 \quad I_4^{(k)}=B(\overline{\mathbf{A}}^{k}_{h};I_{h}\overline{\Psi}^{k},\overline{\theta}_{\Psi}^{k})
 -B(\mathbf{A}^{k-\frac{1}{2}};I_{h}\overline{\Psi}^{k},\overline{\theta}_{\Psi}^{k}).}
 \end{array}
 \end{equation*}

Using the error estimates (\ref{eq:4-1}) for the interpolation operator $I_{h}$ and
the regularity of $\Psi$ in (\ref{eq:2-10}), it is easy to see that
 \begin{equation}\label{eq:4-6}
 \begin{array}{@{}l@{}}
{\displaystyle  |I_1^{(k)}|+ |I_2^{(k)}|\leq C\left((\Delta t)^{4}+h^{2r+2}\right)+C\left(\|\theta_{\Psi}^{k}\|^{2}_{\mathcal{L}^2}+\|\theta_{\Psi}^{k-1}\|^{2}_{\mathcal{L}^2}\right).}
\end{array}
\end{equation}

We observe that
\begin{equation}\label{eq:4-7}
\begin{array}{@{}l@{}}
 {\displaystyle B(\mathbf{A};\psi,\varphi)=(\nabla\psi,\nabla\varphi)+\left(|\mathbf{A}|^{2}\psi,\varphi\right)
 +i\left(\varphi^{\ast}\nabla\psi-\psi\nabla\varphi^{\ast},\mathbf{A}\right)}\\[2mm]
 {\displaystyle \quad \leq \|\nabla\psi\|_{\mathbf{L}^2}\|\nabla\varphi\|_{\mathbf{L}^2}+\|\mathbf{A}\|^{2}_{\mathbf{L}^6}\|\psi\|_{\mathcal{L}^6}\|\varphi\|_{\mathcal{L}^2}
 +\|\mathbf{A}\|_{\mathbf{L}^6}\big(\|\psi\|_{\mathcal{L}^3}\|\nabla\varphi\|_{\mathbf{L}^2}}\\[2mm]
 {\displaystyle \quad+\|\nabla\psi\|_{\mathbf{L}^2}\|\varphi\|_{\mathcal{L}^3}\big)\leq C\|\nabla\psi\|_{\mathbf{L}^2}
 \|\nabla\varphi\|_{\mathbf{L}^2},\quad \forall \mathbf{A}\in \mathbf{L}^6(\Omega),
 \,\,\,\psi,\varphi\in\mathcal{H}_{0}^{1}(\Omega),}
 \end{array}
 \end{equation}
 and
 \begin{equation}\label{eq:4-8}
 \begin{array}{@{}l@{}}
 {\displaystyle I_3^{(k)}= B(\mathbf{A}^{k-\frac{1}{2}};(I_{h}\overline{\Psi}^{k}-\overline{\Psi}^{k}),\overline{\theta}_{\Psi}^{k})
 +B(\mathbf{A}^{k-\frac{1}{2}};(\overline{\Psi}^{k}-\Psi^{k-\frac{1}{2}}),\overline{\theta}_{\Psi}^{k}).}
 \end{array}
 \end{equation}

 It follows from (\ref{eq:4-1}), (\ref{eq:4-7}) and (\ref{eq:4-8}) that
 \begin{equation}\label{eq:4-9}
 {\displaystyle |I_3^{(k)}|\leq C\left(h^{2r}+(\Delta t)^{4}\right)+C\left(\|\nabla\theta_{\Psi}^{k}\|^{2}_{\mathbf{L}^2}
 +\|\nabla\theta_{\Psi}^{k-1}\|^{2}_{\mathbf{L}^2}\right).}
 \end{equation}

Notice that
 \begin{equation}\label{eq:4-10}
 \begin{array}{@{}l@{}}
 {\displaystyle I_4^{(k)}= \Big[B(\overline{\mathbf{A}}^{k}_{h}; I_{h}\overline{\Psi}^{k},\overline{\theta}_{\Psi}^{k})
 -B({\bm\pi}_{h}\overline{\mathbf{A}}^{k}; I_{h}\overline{\Psi}^{k},\overline{\theta}_{\Psi}^{k})\Big]+\Big[B({\bm \pi}_{h}\overline{\mathbf{A}}^{k}; I_{h}\overline{\Psi}^{k},\overline{\theta}_{\Psi}^{k})}\\[2mm]
 {\displaystyle \quad
 -B(\overline{\mathbf{A}}^{k}; I_{h}\overline{\Psi}^{k},\overline{\theta}_{\Psi}^{k})\Big]+\Big[B(\overline{\mathbf{A}}^{k}; I_{h}\overline{\Psi}^{k},\overline{\theta}_{\Psi}^{k})
 -B(\mathbf{A}^{k-\frac{1}{2}}; I_{h}\overline{\Psi}^{k},\overline{\theta}_{\Psi}^{k})\Big]}\\[2mm]
 \end{array}
 \end{equation}

 By using (\ref{eq:3-1})-(\ref{eq:3-3}) and (\ref{eq:3-10}), we prove
\begin{equation}\label{eq:4-13}
 \begin{array}{@{}l@{}}
{\displaystyle |I_4^{(k)}|\leq C\left(D({\theta}^{k}_{\mathbf{A}},{\theta}^{k}_{\mathbf{A}})
+D({\theta}^{k-1}_{\mathbf{A}},{\theta}^{k-1}_{\mathbf{A}})\right)}\\[2mm]
{\displaystyle\qquad\quad+C\big\{h^{2r}+(\Delta t)^{4}+\|\nabla\theta_{\Psi}^{k}\|_{\mathbf{L}^2}^{2}+\|\nabla\theta_{\Psi}^{k-1}\|_{\mathbf{L}^2}^{2}\big\}.}
\end{array}
\end{equation}

Taking the imaginary part of (\ref{eq:4-5}), we have
\begin{equation}\label{eq:4-14}
\begin{array}{@{}l@{}}
{\displaystyle \frac{1}{\Delta t}\big(\|\theta^{k}_{\Psi}\|^{2}_{\mathcal{L}^2}
-\|\theta^{k-1}_{\Psi}\|^{2}_{\mathcal{L}^2}\big)
=\mathrm{Im}(I_1^{(k)})+\mathrm{Im}(I_2^{(k)})+\mathrm{Im}(I_3^{(k)})+\mathrm{Im}(I_4^{(k)})}\\[2mm]
{\displaystyle\qquad\quad\leq |I_1^{(k)}|+|I_2^{(k)}|+|I_3^{(k)}|+|I_4^{(k)}|
\leq C \left(D ({\theta}^{k}_{\mathbf{A}},{\theta}^{k}_{\mathbf{A}})
+D({\theta}^{k-1}_{\mathbf{A}},{\theta}^{k-1}_{\mathbf{A}})\right)}\\[2mm]
{\displaystyle\qquad\quad+C\big\{h^{2r}+(\Delta t)^{4}
+\|\nabla\theta_{\Psi}^{k}\|_{\mathbf{L}^2}^{2}
+\|\nabla\theta_{\Psi}^{k-1}\|_{\mathbf{L}^2}^{2}\big\},}
\end{array}
\end{equation}
and therefore
\begin{equation}\label{eq:4-15}
\begin{array}{@{}l@{}}
 {\displaystyle \|\theta^{M}_{\Psi}\|^{2}_{\mathcal{L}^2}\leq \|\theta^{0}_{\Psi}\|^{2}_{\mathcal{L}^2}
 +C\big(h^{2r}+(\Delta t)^{4}\big)+C \Delta t
 \sum_{k=1}^{M}{D({\theta}^{k}_{\mathbf{A}},{\theta}^{k}_{\mathbf{A}})}
 +C\Delta t\sum_{k=1}^{M} {\|\nabla\theta_{\Psi}^{k}\|_{\mathbf{L}^2}^{2}}}\\[2mm]
 {\displaystyle \quad \leq C\big(h^{2r}+(\Delta t)^{4}\big)
 +C\Delta t\sum_{k=1}^{M}{D({\theta}^{k}_{\mathbf{A}},
 {\theta}^{k}_{\mathbf{A}})}+C\Delta t\sum_{k=1}^{M} {\|\nabla\theta_{\Psi}^{k}\|_{\mathbf{L}^2}^{2}}.}
 \end{array}
 \end{equation}
 Here we have used the fact
 $\|\theta^{0}_{\Psi}\|^{2}_{\mathcal{L}^2} \leq C h^{2r+2} $.

To proceed further, we take $\varphi=\Delta t\partial{\theta_{\Psi}^{k}}=\theta_{\Psi}^{k}-\theta_{\Psi}^{k-1}$, $ k=1,2,\cdots, M $ in (\ref{eq:4-3}),  to find
\begin{equation}\label{eq:4-16}
 {\displaystyle -2\mathrm{i}\Delta t(\partial \theta^{k}_{\Psi},\partial{\theta_{\Psi}^{k}})+\Delta t B\left(\overline{\mathbf{A}}^{k}_{h};\overline{\theta}_{\Psi}^{k},\partial{\theta_{\Psi}^{k}}\right)
 =J_1^{(k)}+J_2^{(k)}+J_3^{(k)}+J_4^{(k)},}
 \end{equation}
 where
 \begin{equation*}
 \begin{array}{@{}l@{}}
 {\displaystyle J_1^{(k)}=2\mathrm{i}\left(\partial I_{h}\Psi^{k}-(\Psi_{t})^{k-\frac{1}{2}},\theta_{\Psi}^{k}-\theta_{\Psi}^{k-1}\right),\quad J_2^{(k)}=2V_0\left(\Psi^{k-\frac{1}{2}}-\overline{\Psi}_{h}^{k},\theta_{\Psi}^{k}-\theta_{\Psi}^{k-1}\right),}\\[2mm]
 {\displaystyle J_3^{(k)}=B(\mathbf{A}^{k-\frac{1}{2}};(\Psi^{k-\frac{1}{2}}-I_{h}\overline{\Psi}^{k}),\theta_{\Psi}^{k}-\theta_{\Psi}^{k-1}),}\\[2mm]
 {\displaystyle J_4^{(k)}=B(\mathbf{A}^{k-\frac{1}{2}}; I_{h}\overline{\Psi}^{k},\theta_{\Psi}^{k}-\theta_{\Psi}^{k-1})-B(\overline{\mathbf{A}}^{k}_{h}; I_{h}\overline{\Psi}^{k},\theta_{\Psi}^{k}-\theta_{\Psi}^{k-1}).}
 \end{array}
 \end{equation*}

By virtue of (\ref{eq:4-2}), we get
\begin{equation}\label{eq:4-17}
\begin{array}{@{}l@{}}
{\displaystyle \sum_{k=1}^{M} J_1^{(k)}=2\mathrm{i}\sum_{k=1}^{M}\left(\partial I_{h}\Psi^{k}-(\Psi_{t})^{k-\frac{1}{2}},\theta_{\Psi}^{k}-{\theta_{\Psi}^{k-1}}\right)}\\[2mm]
{\displaystyle \quad= 2\mathrm{i}\left(\partial I_{h}\Psi^{M}-(\Psi_{t})^{M-\frac{1}{2}},\theta_{\Psi}^{M}\right)-2\mathrm{i}\left(\partial I_{h}\Psi^{1}
-(\Psi_{t})^{\frac{1}{2}},\theta_{\Psi}^{0}\right)}\\[2mm]
{\displaystyle \quad-2\mathrm{i}\sum_{k=1}^{M-1}\left(\partial I_{h}\Psi^{k+1}
-\partial I_{h}\Psi^{k}-(\Psi_{t})^{k+\frac{1}{2}}
+(\Psi_{t})^{k-\frac{1}{2}},\,\theta_{\Psi}^{k}\right).}
\end{array}
\end{equation}

It follows from (\ref{eq:4-1}) and (\ref{eq:4-17})  that
\begin{equation}\label{eq:4-19}
\begin{array}{@{}l@{}}
{\displaystyle |\sum_{k=1}^{M} J_1^{(k)}|\leq C\big(h^{2r+2}+(\Delta t)^{4}\big)
+ C\|\theta_{\Psi}^{M}\|_{\mathcal{L}^2}^{2}+C\Delta t \sum_{k=1}^{M-1}{\|\theta_{\Psi}^{k}\|_{\mathcal{L}^2}^{2}}.}
\end{array}
\end{equation}

To estimate the term $ J_2^{(k)}$, we rewrite it as
\begin{equation*}
\begin{array}{@{}l@{}}
{\displaystyle 2 V_0\left(\Psi^{k-\frac{1}{2}}-\overline{\Psi}_{h}^{k},\theta_{\Psi}^{k}-\theta_{\Psi}^{k-1}\right) = 2V_0\left(\Psi^{k-\frac{1}{2}}
-I_{h}\overline{\Psi}^{k},\theta_{\Psi}^{k}-\theta_{\Psi}^{k-1}\right)}\\[2mm]
{\displaystyle \quad-2V_0\left(\frac{1}{2}(\theta_{\Psi}^{k}+\theta_{\Psi}^{k-1}),\theta_{\Psi}^{k}-\theta_{\Psi}^{k-1}\right)
\stackrel{\mathrm{def}}{=}J_2^{(k),1}+J_2^{(k),2}.}
\end{array}
\end{equation*}

By applying a standard argument, we find that
\begin{equation}\label{eq:4-22}
\begin{array}{@{}l@{}}
{\displaystyle  |\mathrm{Re}\big(\sum_{k=1}^{M} J_2^{(k)}\big)|\leq |\sum_{k=1}^{M}J_2^{(k),1}| +|\mathrm{Re}\big(\sum_{k=1}^{M} J_2^{(k),2}\big)|}\\[2mm]
{\displaystyle \quad \leq C\big(h^{2r+2}+(\Delta t)^{4}\big)
+C \|\theta_{\Psi}^{M}\|_{\mathcal{L}^2}^{2}+ C\Delta t \sum_{k=1}^{M-1}{\|\theta_{\Psi}^{k}\|_{\mathcal{L}^2}^{2}}.}
\end{array}
\end{equation}

We recall (\ref{eq:3-10}) and  rewrite $ J_3^{(k)} $ as follows:
\begin{equation}\label{eq:4-23}
\begin{array}{@{}l@{}}
{\displaystyle J_3^{(k)}= \left(\nabla( \Psi^{k-\frac{1}{2}}-I_{h}\overline{\Psi}^{k}),\;\nabla
(\theta_{\Psi}^{k}-\theta_{\Psi}^{k-1})\right)}\\[2mm]
{\displaystyle\quad \qquad +\left(|\mathbf{A}^{k-\frac{1}{2}}|^2(\Psi^{k-\frac{1}{2}}-I_{h}\overline{\Psi}^{k}),\theta_{\Psi}^{k}-\theta_{\Psi}^{k-1}\right)}\\[2mm]
{\displaystyle \quad \qquad+ \mathrm{i}\left(\nabla(\Psi^{k-\frac{1}{2}}-I_{h}\overline{\Psi}^{k})\mathbf{A}^{k-\frac{1}{2}},\;\theta_{\Psi}^{k}-\theta_{\Psi}^{k-1}\right)}\\[2mm]
{\displaystyle \quad \qquad-\mathrm{i}\left((\Psi^{k-\frac{1}{2}}-I_{h}\overline{\Psi}^{k})\mathbf{A}^{k-\frac{1}{2}},\;\nabla \theta_{\Psi}^{k}
-\nabla \theta_{\Psi}^{k-1}\right).}\\[2mm]
\end{array}
\end{equation}

By employing (\ref{eq:4-1}), (\ref{eq:4-2}), the regularity assumption (\ref{eq:2-10}) and Young's inequality,  we can prove the following estimate of  $ \sum_{k=1}^{M}{J_3^{(k)}} $ .
\begin{equation}\label{eq:4-23-0}
{\displaystyle  |\sum_{k=1}^{M}{J_3^{(k)}}| \leq C(h^{2r}+(\Delta t)^{4}) +C \Vert\theta_{\Psi}^{M}\Vert_{\mathcal{L}^2}^{2}
+ \frac{1}{16} \Vert\nabla\theta_{\Psi}^{M}\Vert_{\mathbf{L}^2}^{2}+C\Delta t \sum_{k=1}^{M-1}{\Vert\nabla\theta_{\Psi}^{k}\Vert_{\mathbf{L}^2}^{2}}.}
\end{equation}

The proof is standard but tedious. Due to space limitations, we omit it here.

In order to estimate $\sum\limits_{k=1}^{M} J_4^{(k)}$, we rewrite $ J_4^{(k)}$ in the following form:
\begin{equation}\label{eq:4-30}
\begin{array}{@{}l@{}}
{\displaystyle   J_4^{(k)}= \Big[B(\mathbf{A}^{k-\frac{1}{2}}; I_{h}\overline{\Psi}^{k},\theta_{\Psi}^{k}-\theta_{\Psi}^{k-1})
-B(\overline{\mathbf{A}}^{k}; I_{h}\overline{\Psi}^{k},\theta_{\Psi}^{k}-\theta_{\Psi}^{k-1})\Big]}\\[2mm]
{\displaystyle  \quad\quad +\Big[B(\overline{\mathbf{A}}^{k}; I_{h}\overline{\Psi}^{k},\theta_{\Psi}^{k}-\theta_{\Psi}^{k-1})-B({\bm \pi}_{h}\overline{\mathbf{A}}^{k}; I_{h}\overline{\Psi}^{k},\theta_{\Psi}^{k}-\theta_{\Psi}^{k-1})\Big]}\\[2mm]
{\displaystyle  \quad\quad +\Big[B({\bm \pi}_{h}\overline{\mathbf{A}}^{k}; I_{h}\overline{\Psi}^{k},\theta_{\Psi}^{k}-\theta_{\Psi}^{k-1})-B(\overline{\mathbf{A}}^{k}_{h}; I_{h}\overline{\Psi}^{k},\theta_{\Psi}^{k}-\theta_{\Psi}^{k-1})\Big]}\\[2mm]
{\displaystyle\quad\quad \stackrel{\mathrm{def}}{=}J_4^{(k),1}+J_4^{(k),2}+J_4^{(k),3}.}
\end{array}
\end{equation}

By applying (\ref{eq:3-15}) and (\ref{eq:4-2}), we deduce
\begin{equation}\label{eq:4-34}
\begin{array}{@{}l@{}}
{\displaystyle  |\sum_{k=1}^{M} J_4^{(k),1}| +  |\sum_{k=1}^{M} J_4^{(k),2}| \leq C\big(h^{2r}+(\Delta t)^{4}\big)
 +C \|\theta_{\Psi}^{M}\|_{\mathcal{L}^2}^{2}}\\[2mm]
{\displaystyle \quad+ \frac{1}{16} \|\nabla\theta_{\Psi}^{M}\|_{\mathbf{L}^2}^{2}+C\Delta t \sum_{k=1}^{M-1}{(\|\theta_{\Psi}^{k}\|_{\mathcal{L}^2}^{2}+\|\nabla\theta_{\Psi}^{k}\|_{\mathbf{L}^2}^{2})}.}
\end{array}
\end{equation}

In order to estimate $|\sum\limits_{k=1}^{M} J_4^{(k),3}|$, we rewrite it as follows.
\begin{equation}\label{eq:4-35}
\begin{array}{@{}l@{}}
{\displaystyle  \sum_{k=1}^{M}J_4^{(k),3}=\sum_{k=1}^{M}{ \left(I_{h}\overline{\Psi}^{k}({\bm \pi}_{h}\overline{\mathbf{A}}^{k}+\overline{\mathbf{A}}^{k}_{h})({\bm\pi}_{h}\overline{\mathbf{A}}^{k}
-\overline{\mathbf{A}}^{k}_{h}),\;\theta_{\Psi}^{k}-\theta_{\Psi}^{k-1}\right)}}\\[2mm]
{\displaystyle\quad\quad\quad\quad-\sum_{k=1}^{M}{\mathrm{i}\left( I_{h}\overline{\Psi}^{k}({\bm\pi}_{h}\overline{\mathbf{A}}^{k}-\overline{\mathbf{A}}^{k}_{h}),\;
\nabla\theta_{\Psi}^{k}-\nabla\theta_{\Psi}^{k-1}\right)}}\\[2mm]
{\displaystyle \quad\quad\quad\quad+\sum_{k=1}^{M}{\mathrm{i}\left(\nabla I_{h}\overline{\Psi}^{k}({\bm\pi}_{h}\overline{\mathbf{A}}^{k}
-\overline{\mathbf{A}}^{k}_{h}),\;\theta_{\Psi}^{k}-\theta_{\Psi}^{k-1}\right)}}\\[2mm]
{\displaystyle\quad\quad\quad \stackrel{\mathrm{def}}{=}Q_1+Q_2+Q_3.}
\end{array}
\end{equation}

Note that
\begin{equation}\label{eq:4-36}
\begin{array}{@{}l@{}}
{\displaystyle Q_1= \sum_{k=1}^{M}\left(I_{h}\overline{\Psi}^{k}({\bm\pi}_{h}\overline{\mathbf{A}}^{k}+\overline{\mathbf{A}}^{k}_{h})
({\bm\pi}_{h}\overline{\mathbf{A}}^{k}-\overline{\mathbf{A}}^{k}_{h}),\;\theta_{\Psi}^{k}-\theta_{\Psi}^{k-1}\right)}\\[2mm]
{\displaystyle \quad = -\left(I_{h}\overline{\Psi}^{M}({\bm\pi}_{h}\overline{\mathbf{A}}^{M}
+\overline{\mathbf{A}}^{M}_{h})
\overline{\theta}_{\mathbf{A}}^{M},\;\theta_{\Psi}^{M}\right) + \left(I_{h}\overline{\Psi}^{0}({\bm\pi}_{h}\overline{\mathbf{A}}^{0}+\overline{\mathbf{A}}^{0}_{h})
\overline{\theta}_{\mathbf{A}}^{0},\;\theta_{\Psi}^{0}\right)}\\[2mm]
{\displaystyle \quad + \sum_{k=1}^{M}\left(I_{h}\overline{\Psi}^{k}({\bm\pi}_{h}\overline{\mathbf{A}}^{k}
+\overline{\mathbf{A}}^{k}_{h})
\overline{\theta}_{\mathbf{A}}^{k}-I_{h}\overline{\Psi}^{k-1}({\bm\pi}_{h}\overline{\mathbf{A}}^{k-1}
+\overline{\mathbf{A}}^{k-1}_{h})
\overline{\theta}_{\mathbf{A}}^{k-1},\;\theta_{\Psi}^{k-1}\right).}
\end{array}
\end{equation}

By applying the Young's inequality and (\ref{eq:3-15}), we can estimate the first two terms on the right side of (\ref{eq:4-36}) as follows.
\begin{equation}\label{eq:4-37}
\begin{array}{@{}l@{}}
{\displaystyle |\left(I_{h}\overline{\Psi}^{M}({\bm\pi}_{h}\overline{\mathbf{A}}^{M}
+\overline{\mathbf{A}}^{M}_{h})\overline{\theta}_{\mathbf{A}}^{M},\;\theta_{\Psi}^{M}\right)|
+|\left(I_{h}\overline{\Psi}^{0}({\bm\pi}_{h}\overline{\mathbf{A}}^{0}
+\overline{\mathbf{A}}^{0}_{h})\overline{\theta}_{\mathbf{A}}^{0},\;\theta_{\Psi}^{0}\right)|}\\[2mm]
{\displaystyle \quad\leq \frac{1}{16}D(\overline{\theta}_{\mathbf{A}}^{M}, \overline{\theta}_{\mathbf{A}}^{M})+ C\|\theta_{\Psi}^{M}\|_{\mathcal{L}^2}^{2}+ Ch^{2r}.}
\end{array}
\end{equation}

From (\ref{eq:3-14}) and (\ref{eq:3-15}), we further deduce
\begin{equation}\label{eq:4-39}
\begin{array}{@{}l@{}}
{\displaystyle |\left(I_{h}\overline{\Psi}^{k}({\bm\pi}_{h}\overline{\mathbf{A}}^{k}
+\overline{\mathbf{A}}^{k}_{h})\overline{\theta}_{\mathbf{A}}^{k}-I_{h}\overline{\Psi}^{k-1}
({\bm\pi}_{h}\overline{\mathbf{A}}^{k-1}+\overline{\mathbf{A}}^{k-1}_{h})
\overline{\theta}_{\mathbf{A}}^{k-1},\;\theta_{\Psi}^{k-1}\right)|}\\[2mm]
{\displaystyle \quad\leq \Delta t \|I_{h}\overline{\Psi}^{k}\|_{\mathcal{L}^6}\| {\bm\pi}_{h}\overline{\mathbf{A}}^{k}+\overline{\mathbf{A}}^{k}_{h}\|_{\mathbf{L}^6}
\|\frac{1}{\Delta t} (\overline{\theta}_{\mathbf{A}}^{k}-\overline{\theta}_{\mathbf{A}}^{k-1})\|_{\mathbf{L}^2}\|\theta_{\Psi}^{k-1}\|_{\mathcal{L}^6}}\\[2mm]
{\displaystyle \quad \quad+ \Delta t \|\frac{I_{h}\overline{\Psi}^{k}-I_{h}\overline{\Psi}^{k-1}}{\Delta t}\|_{\mathcal{L}^2}\| {\bm\pi}_{h}\overline{\mathbf{A}}^{k}+\overline{\mathbf{A}}^{k}_{h}\|_{\mathbf{L}^6}
\|\overline{\theta}_{\mathbf{A}}^{k-1}\|_{\mathbf{L}^6}\|\theta_{\Psi}^{k-1}\|_{\mathcal{L}^6}}\\[2mm]
{\displaystyle \quad\quad +\Delta t \|I_{h}\overline{\Psi}^{k-1}\|_{\mathcal{L}^6}\| \overline{\theta}_{\mathbf{A}}^{k-1}\|_{\mathbf{L}^6}\|\frac{{\bm\pi}_{h}\overline{\mathbf{A}}^{k}
-{\bm\pi}_{h}\overline{\mathbf{A}}^{k-1}}{\Delta t}+\frac{\overline{\mathbf{A}}_{h}^{k}-\overline{\mathbf{A}}_{h}^{k-1}}{\Delta t}\|_{\mathbf{L}^2}\|\theta_{\Psi}^{k-1}\|_{\mathcal{L}^6}}\\[2mm]
{\displaystyle \quad\leq C\Delta t\Big\{\|\partial {\theta}_{\mathbf{A}}^{k}\|_{\mathbf{L}^2}^{2}
+ \|\partial {\theta}_{\mathbf{A}}^{k-1}\|_{\mathbf{L}^2}^{2}
+ D({\theta}_{\mathbf{A}}^{k-1},{\theta}_{\mathbf{A}}^{k-1}) + +D({\theta}_{\mathbf{A}}^{k-2},{\theta}_{\mathbf{A}}^{k-2})
+\|\nabla\theta_{\Psi}^{k-1}\|_{\mathbf{L}^2}^{2}\Big\},}\\[2mm]
\end{array}
\end{equation}
where we have used the fact:
\begin{equation}
\Vert \partial I_{h}\overline{\Psi}^{k} \Vert_{\mathcal{L}^{2}} \leq \Vert \partial \overline{\Psi}^{k} \Vert_{\mathcal{L}^{2}} + \Vert I_{h}\partial \overline{\Psi}^{k}  - \partial \overline{\Psi}^{k}\Vert_{\mathcal{L}^{2}} \leq C.
\end{equation}

Hence we get the following estimate:
\begin{equation}\label{eq:4-40}
\begin{array}{@{}l@{}}
{\displaystyle |Q_1|= |\sum_{k=1}^{M}\left(I_{h}\overline{\Psi}^{k}({\bm\pi}_{h}\overline{\mathbf{A}}^{k}
+\overline{\mathbf{A}}^{k}_{h})({\bm\pi}_{h}\overline{\mathbf{A}}^{k}-\overline{\mathbf{A}}^{k}_{h}),\;\theta_{\Psi}^{k}
-\theta_{\Psi}^{k-1}\right)|}\\[2mm]
{\displaystyle \quad\quad\quad\leq \frac{1}{16}D(\overline{\theta}_{\mathbf{A}}^{M}, \overline{\theta}_{\mathbf{A}}^{M})
+ C\|\theta_{\Psi}^{M}\|_{\mathcal{L}^2}^{2}+ C h^{2r} }\\[2mm]
{\displaystyle \quad\quad\quad + C\Delta t \sum_{k=1}^{M}\left(\|\partial {\theta}_{\mathbf{A}}^{k}\|_{\mathbf{L}^2}^{2}
+ D({\theta}_{\mathbf{A}}^{k},{\theta}_{\mathbf{A}}^{k})+ \|\nabla\theta_{\Psi}^{k}\|_{\mathbf{L}^2}^{2}\right).}
\end{array}
\end{equation}

Employing (\ref{eq:4-1}) and integrating by parts, we discover
\begin{equation}\label{eq:4-41}
\begin{array}{@{}l@{}}
{\displaystyle \mathrm{i}Q_2=\left(\nabla I_{h}\overline{\Psi}^{M}\overline{\theta}_{\mathbf{A}}^{M},\;\theta_{\Psi}^{M}\right)
 + \left(I_{h}\overline{\Psi}^{M}\nabla\cdot\overline{\theta}_{\mathbf{A}}^{M},\;\theta_{\Psi}^{M}\right)
 +\left(I_{h}\overline{\Psi}^{0}\overline{\theta}_{\mathbf{A}}^{0},\;\nabla\theta_{\Psi}^{0}\right)}\\[2mm]
{\displaystyle \quad\quad +\sum_{k=1}^{M}\left(I_{h}\overline{\Psi}^{k}\overline{\theta}_{\mathbf{A}}^{k}- I_{h}\overline{\Psi}^{k-1}\overline{\theta}_{\mathbf{A}}^{k-1},\;\nabla\theta_{\Psi}^{k-1}\right),}
\end{array}
\end{equation}

By using the Young's inequality, we can estimate the first three terms on the right side of (\ref{eq:4-41}) as follows:
 \begin{equation}\label{eq:4-42}
 \begin{array}{@{}l@{}}
 {\displaystyle |\left(\nabla I_{h}\overline{\Psi}^{M}\overline{\theta}_{\mathbf{A}}^{M},\;\theta_{\Psi}^{M}\right)|
 +|\left(I_{h}\overline{\Psi}^{M}\nabla\cdot\overline{\theta}_{\mathbf{A}}^{M},\;\theta_{\Psi}^{M}\right)|
 +|\left(I_{h}\overline{\Psi}^{0}\overline{\theta}_{\mathbf{A}}^{0},\;\nabla\theta_{\Psi}^{0}\right)|}\\[2mm]
 {\displaystyle \quad\leq \frac{1}{16}D(\overline{\theta}_{\mathbf{A}}^{M}, \overline{\theta}_{\mathbf{A}}^{M})+ C\|\theta_{\Psi}^{M}\|_{\mathcal{L}^2}^{2}+C h^{2r}.}
 \end{array}
 \end{equation}

 Using (\ref{eq:4-1}), the last term on the right side of (\ref{eq:4-41}) can be estimated by
 \begin{equation}\label{eq:4-43}
 \begin{array}{@{}l@{}}
 {\displaystyle |\sum_{k=1}^{M}\left(I_{h}\overline{\Psi}^{k}\overline{\theta}_{\mathbf{A}}^{k}- I_{h}\overline{\Psi}^{k-1}\overline{\theta}_{\mathbf{A}}^{k-1},\;\nabla\theta_{\Psi}^{k-1}\right)|}\\[2mm]
{\displaystyle \quad\leq  C\Delta t \sum_{k=1}^{M}\left(\|\overline{\theta}_{\mathbf{A}}^{k}\|_{\mathbf{H}^{1}}
\|\nabla\theta_{\Psi}^{k-1}\|_{\mathbf{L}^2}+\|\partial \overline{\theta}_{\mathbf{A}}^{k}\|_{\mathbf{L}^2}\|\nabla\theta_{\Psi}^{k-1}\|_{\mathbf{L}^2}\right)}\\[2mm]
{\displaystyle \quad\leq  C\Delta t \sum_{k=0}^{M}\left(D({\theta}_{\mathbf{A}}^{k},{\theta}_{\mathbf{A}}^{k})
+\|\partial {\theta}_{\mathbf{A}}^{k}\|_{\mathbf{L}^2}^{2}+\|\nabla\theta_{\Psi}^{k}\|_{\mathbf{L}^2}^{2}\right).}
 \end{array}
 \end{equation}

Hence we get
 \begin{equation}\label{eq:4-44}
 \begin{array}{@{}l@{}}
 {\displaystyle |Q_2|=|-\mathrm{i}\sum_{k=1}^{M}\left(I_{h}\overline{\Psi}^{k}({\bm\pi}_{h}\overline{\mathbf{A}}^{k}-\overline{\mathbf{A}}^{k}_{h}),\;
 \nabla\theta_{\Psi}^{k}-\nabla\theta_{\Psi}^{k-1}\right)|}\\[2mm]
 {\displaystyle \quad\quad\leq \frac{1}{16}D(\overline{\theta}_{\mathbf{A}}^{M}, \overline{\theta}_{\mathbf{A}}^{M})
 + C\|\theta_{\Psi}^{M}\|_{\mathcal{L}^2}^{2}+C h^{2r}}\\[2mm]
 {\displaystyle \quad \quad\quad+ C\Delta t \sum_{k=0}^{M}\left(D({\theta}_{\mathbf{A}}^{k},{\theta}_{\mathbf{A}}^{k})+\|\partial {\theta}_{\mathbf{A}}^{k}\|_{\mathbf{L}^2}^{2}+\|\nabla\theta_{\Psi}^{k}\|_{\mathbf{L}^2}^{2}\right).}
 \end{array}
 \end{equation}

Reasoning as before, we can estimate $ Q_3 $ as follows:
 \begin{equation}\label{eq:4-45}
 \begin{array}{@{}l@{}}
 {\displaystyle |Q_3|=|\mathrm{i}\sum_{k=1}^{M}\left(\nabla I_{h}\overline{\Psi}^{k}({\bm\pi}_{h}\overline{\mathbf{A}}^{k}
 -\overline{\mathbf{A}}^{k}_{h}),\;\theta_{\Psi}^{k}-\theta_{\Psi}^{k-1}\right)|}\\[2mm]
 {\displaystyle \quad\quad\leq  \frac{1}{16}D(\overline{\theta}_{\mathbf{A}}^{M}, \overline{\theta}_{\mathbf{A}}^{M}) + C\|\theta_{\Psi}^{M}\|_{\mathcal{L}^2}^{2}+C h^{2r}}\\[2mm]
 {\displaystyle \quad\quad\quad + C\Delta t \sum_{k=0}^{M}\left(D({\theta}_{\mathbf{A}}^{k},{\theta}_{\mathbf{A}}^{k})+\|\partial {\theta}_{\mathbf{A}}^{k}\|_{\mathbf{L}^2}^{2}+\|\nabla\theta_{\Psi}^{k}\|_{\mathbf{L}^2}^{2}\right).}
 \end{array}
 \end{equation}

Combining (\ref{eq:4-40}), (\ref{eq:4-44}) and (\ref{eq:4-45}) implies
\begin{equation}\label{eq:4-46}
\begin{array}{@{}l@{}}
{\displaystyle |\sum_{k=1}^{M} J_4^{(k),3}| \leq  \frac{3}{16}D(\overline{\theta}_{\mathbf{A}}^{M}, \overline{\theta}_{\mathbf{A}}^{M})
+ C\|\theta_{\Psi}^{M}\|_{\mathcal{L}^2}^{2}+C h^{2r}}\\[2mm]
 {\displaystyle \quad\quad\quad + C\Delta t \sum_{k=0}^{M}\left(D({\theta}_{\mathbf{A}}^{k},{\theta}_{\mathbf{A}}^{k})
 +\|\partial {\theta}_{\mathbf{A}}^{k}\|_{\mathbf{L}^2}^{2}
 +\|\nabla\theta_{\Psi}^{k}\|_{\mathbf{L}^2}^{2}\right).}
 \end{array}
 \end{equation}

It follows from (\ref{eq:4-30}), (\ref{eq:4-34}) and (\ref{eq:4-46}) that
 \begin{equation}\label{eq:4-47}
 \begin{array}{@{}l@{}}
 {\displaystyle \displaystyle |\sum_{k=1}^{M} J_4^{(k)}|\leq C\left(h^{2r}+(\Delta t)^{4}\right)+\frac{3}{16}D(\overline{\theta}_{\mathbf{A}}^{M}, \overline{\theta}_{\mathbf{A}}^{M}) +\frac{1}{16} \|\nabla\theta_{\Psi}^{M}\|_{\mathbf{L}^2}^{2}
 +C\|\theta_{\Psi}^{M}\|_{\mathcal{L}^2}^{2} }\\[2mm]
 {\displaystyle \qquad+C\Delta t \sum_{k=0}^{M}\Big\{D({\theta}_{\mathbf{A}}^{k},{\theta}_{\mathbf{A}}^{k}) +\|\partial {\theta}_{\mathbf{A}}^{k}\|_{\mathbf{L}^2}^{2}+\|\theta_{\Psi}^{k}\|_{\mathcal{L}^2}^{2}
 +\|\nabla\theta_{\Psi}^{k}\|_{\mathbf{L}^2}^{2}\Big\} }\\[2mm]
 \end{array}
 \end{equation}

Now take the real part of (\ref{eq:4-16}), and we get
\begin{equation}\label{eq:4-48}
\begin{array}{@{}l@{}}
{\displaystyle \Delta t \mathrm{Re}\left[B\left(\overline{\mathbf{A}}_{h}^{k};
\overline{\theta}_{\Psi}^{k},\partial \theta_{\Psi}^{k}\right)\right] =\mathrm{Re}
\big(J_1^{(k)}\big)+\mathrm{Re}\big(J_2^{(k)}\big)+\mathrm{Re}
\big(J_3^{(k)}\big)+\mathrm{Re}
\big(J_4^{(k)}\big).}
\end{array}
\end{equation}

Similarly to (\ref{eq:3-11}), we have
\begin{equation}\label{eq:4-49}
\begin{array}{@{}l@{}}
{\displaystyle \mathrm{Re}\left[B\left(\overline{\mathbf{A}}_{h}^{k};\overline{\theta}_{\Psi}^{k},\partial \theta_{\Psi}^{k}\right)\right]=
 -\left(\frac{1}{2}(\overline{\mathbf{A}}_{h}^{k}+\overline{\mathbf{A}}_{h}^{k-1})|\theta_{\Psi}^{k-1}|^{2},\frac{1}{2}(\partial \mathbf{A}_{h}^{k}+\partial \mathbf{A}_{h}^{k-1})\right)}\\[2mm]
{\displaystyle\quad +\frac{1}{2}\partial B(\overline{\mathbf{A}}_{h}^{k};\theta_{\Psi}^{k},\theta_{\Psi}^{k})-\left(f(\theta_{\Psi}^{k-1},\theta_{\Psi}^{k-1}),\frac{1}{2}(\partial \mathbf{A}_{h}^{k}+\partial \mathbf{A}_{h}^{k-1})\right).}
\end{array}
\end{equation}

Substituting (\ref{eq:4-49}) into (\ref{eq:4-48}) and summing over $k = 1,2,\cdots,M$, we get
\begin{equation}\label{eq:4-51}
\begin{array}{@{}l@{}}
{\displaystyle \frac{1}{2}B(\overline{\mathbf{A}}_{h}^{M};\theta_{\Psi}^{M},\theta_{\Psi}^{M})
=\sum_{k=1}^{M}\left[\mathrm{Re}
\big(J_1^{(k)}\big)+\mathrm{Re}\big(J_2^{(k)}\big)+\mathrm{Re}
\big(J_3^{(k)}\big)+\mathrm{Re}
\big(J_4^{(k)}\big)\right]}\\[2mm]
{\displaystyle\quad+ \frac{1}{2}B(\overline{\mathbf{A}}_{h}^{0};\theta_{\Psi}^{0},\theta_{\Psi}^{0})+\Delta t\sum_{k=1}^{M} \Big(\frac{1}{2}(\overline{\mathbf{A}}_{h}^{k}+\overline{\mathbf{A}}_{h}^{k-1})|\theta_{\Psi}^{k-1}|^{2},\frac{1}{2}(\partial \mathbf{A}_{h}^{k}+\partial \mathbf{A}_{h}^{k-1})\Big)}\\[2mm]
{\displaystyle \quad+\Delta t\sum_{k=1}^{M}\Big(f(\theta_{\Psi}^{k-1},\theta_{\Psi}^{k-1}),\frac{1}{2}(\partial \mathbf{A}_{h}^{k}+\partial \mathbf{A}_{h}^{k-1})\Big).}
\end{array}
\end{equation}

Combining (\ref{eq:4-19}), (\ref{eq:4-22}), (\ref{eq:4-23-0}) and (\ref{eq:4-47}) implies
\begin{equation}\label{eq:4-53}
\begin{array}{@{}l@{}}
{\displaystyle
\sum_{k=1}^{M}\left[\mathrm{Re}
\big(J_1^{(k)}\big)+\mathrm{Re}\big(J_2^{(k)}\big)+\mathrm{Re}
\big(J_3^{(k)}\big)+\mathrm{Re}
\big(J_4^{(k)}\big)\right]}\\[2mm]
{\displaystyle\quad \leq |\sum_{k=1}^{M}J_1^{(k)}|+|\sum_{k=1}^{M}J_2^{(k)}|+|\sum_{k=1}^{M}J_3^{(k)}|+|\sum_{k=1}^{M}J_4^{(k)}|}\\[2mm]
{\displaystyle \leq  C\left(h^{2r}+(\Delta t)^{4}\right)+\frac{3}{16}D(\overline{\theta}_{\mathbf{A}}^{M}, \overline{\theta}_{\mathbf{A}}^{M})
+\frac{1}{8} \|\nabla\theta_{\Psi}^{M}\|_{\mathbf{L}^2}^{2} +C\|\theta_{\Psi}^{M}\|_{\mathcal{L}^2}^{2}}\\[2mm]
 {\displaystyle \qquad+C\Delta t \sum_{k=0}^{M}\left(D({\theta}_{\mathbf{A}}^{k},{\theta}_{\mathbf{A}}^{k})
 +\|\partial {\theta}_{\mathbf{A}}^{k}\|_{\mathbf{L}^2}^{2}+\|\theta_{\Psi}^{k}\|_{\mathcal{L}^2}^{2}
 +\|\nabla\theta_{\Psi}^{k}\|_{\mathbf{L}^2}^{2}\right).}
 \end{array}
 \end{equation}
 
By employing Theorem~\ref{thm3-1}, we discover
\begin{equation}\label{eq:4-53-0}
\Big(\frac{1}{2}(\overline{\mathbf{A}}_{h}^{k}+\overline{\mathbf{A}}_{h}^{k-1})|\theta_{\Psi}^{k-1}|^{2},\frac{1}{2}(\partial \mathbf{A}_{h}^{k}+\partial \mathbf{A}_{h}^{k-1})\Big) \leq C \|\nabla\theta_{\Psi}^{k-1}\|_{\mathbf{L}^2}^{2}.
\end{equation} 

Setting
\begin{equation}\label{eq:4-54}
J_5^{(k)} = \Big(f(\theta_{\Psi}^{k-1},\theta_{\Psi}^{k-1}),\frac{1}{2}(\partial \mathbf{A}_{h}^{k}+\partial \mathbf{A}_{h}^{k-1})\Big),
\end{equation}
 and applying (\ref{eq:4-53}) and (\ref{eq:4-53-0}), we obtain
 \begin{equation}\label{eq:4-55}
 \begin{array}{@{}l@{}}
 {\displaystyle \frac{1}{2}B(\overline{\mathbf{A}}_{h}^{M};\theta_{\Psi}^{M},\theta_{\Psi}^{M}) \leq C\left(h^{2r}+(\Delta t)^{4}\right)+\frac{3}{16}D(\overline{\theta}_{\mathbf{A}}^{M},\overline{\theta}_{\mathbf{A}}^{M}) +\frac{1}{8} \|\nabla\theta_{\Psi}^{M}\|_{\mathbf{L}^2}^{2}
  +C\|\theta_{\Psi}^{M}\|_{\mathcal{L}^2}^{2}}\\[2mm]
  {\displaystyle \qquad +\Delta t \sum_{k=1}^{M} J_5^{(k)} + C\Delta t \sum_{k=0}^{M}\left\{D({\theta}_{\mathbf{A}}^{k},{\theta}_{\mathbf{A}}^{k})
  +\|\partial {\theta}_{\mathbf{A}}^{k}\|_{\mathbf{L}^2}^{2}+\|\theta_{\Psi}^{k}\|_{\mathcal{L}^2}^{2}
  +\|\nabla\theta_{\Psi}^{k}\|_{\mathbf{L}^2}^{2}\right\}.}
 \end{array}
 \end{equation}

Arguing as in the proof of Theorem~\ref{thm3-1}, we discover  
 \begin{equation}\label{eq:4-56}
 \|\left(\mathrm{i}\nabla
+\overline{\mathbf{A}}^{M}_{h}\right)\theta^{M}_{\Psi}\|_{\mathbf{L}^2}\geq \|\nabla\theta_{\Psi}^{M}\|_{\mathbf{L}^2} - \|\overline{\mathbf{A}}_{h}^{M}\theta_{\Psi}^{M}\|_{\mathbf{L}^2} \geq  \frac{3}{4}\|\nabla\theta_{\Psi}^{M}\|_{\mathbf{L}^2} - C\|\theta_{\Psi}^{M}\|_{\mathbf{L}^2}, 
 \end{equation}
and thus
 \begin{equation}\label{eq:4-57}
B(\overline{\mathbf{A}}_{h}^{M};\theta_{\Psi}^{M},\theta_{\Psi}^{M}) + C\|\theta_{\Psi}^{M}\|_{\mathbf{L}^2}^{2} \geq \frac{9}{32}\|\nabla\theta_{\Psi}^{M}\|_{\mathbf{L}^2}^{2}.
\end{equation}

Substituting (\ref{eq:4-57}) into (\ref{eq:4-55}) , we obtain
 \begin{equation}\label{eq:4-59}
 \begin{array}{@{}l@{}}
 {\displaystyle \frac{1}{64}\|\nabla\theta_{\Psi}^{M}\|_{\mathbf{L}^2}^{2}
 \leq C\left(h^{2r}+(\Delta t)^{4}\right)+\frac{3}{16}D(\overline{\theta}_{\mathbf{A}}^{M},\overline{\theta}_{\mathbf{A}}^{M})  +C\|\theta_{\Psi}^{M}\|_{\mathcal{L}^2}^{2}+\Delta t \sum_{k=1}^{M} J_5^{(k)}}\\[2mm]
 {\displaystyle \quad\quad\quad+C\Delta t \sum_{k=0}^{M}\left\{D({\theta}_{\mathbf{A}}^{k},{\theta}_{\mathbf{A}}^{k})
 +\|\partial {\theta}_{\mathbf{A}}^{k}\|_{\mathbf{L}^2}^{2}+\|\theta_{\Psi}^{k}\|_{\mathcal{L}^2}^{2}
 +\|\nabla\theta_{\Psi}^{k}\|_{\mathbf{L}^2}^{2}\right\}.}
 \end{array}
 \end{equation}

 Multiplying (\ref{eq:4-15}) with $(C + 1)$ and adding to (\ref{eq:4-59}), we end up with
 \begin{equation}\label{eq:4-60}
 \begin{array}{@{}l@{}}
 {\displaystyle \|\theta_{\Psi}^{M}\|_{\mathcal{L}^2}^{2}+\frac{1}{64}\|\nabla\theta_{\Psi}^{M}\|_{\mathbf{L}^2}^{2}
 \leq C\left(h^{2r}+(\Delta t)^{4}\right) +\frac{3}{16}D(\overline{\theta}_{\mathbf{A}}^{M},\overline{\theta}_{\mathbf{A}}^{M})+\Delta t \sum_{k=1}^{M} J_5^{(k)}}\\[2mm]
{\displaystyle \quad\quad\quad+C\Delta t \sum_{k=0}^{M}\left\{D({\theta}_{\mathbf{A}}^{k},{\theta}_{\mathbf{A}}^{k})
+\|\partial {\theta}_{\mathbf{A}}^{k}\|_{\mathbf{L}^2}^{2}+\|\theta_{\Psi}^{k}\|_{\mathcal{L}^2}^{2}
+\|\nabla\theta_{\Psi}^{k}\|_{\mathbf{L}^2}^{2}\right\}.}
 \end{array}
 \end{equation}

\subsection{Estimates for (\ref{eq:4-4})}
Setting
\begin{equation}\label{eq:4-61}
\begin{array}{@{}l@{}}
{\displaystyle  K_1^{(k)}=\left((\mathbf{A}_{tt})^{k-1}-\partial^{2}{\bm\pi}_{h}\mathbf{A}^{k},\mathbf{v}\right),}\\[2mm]
{\displaystyle K_2^{(k)}= D(\mathbf{A}^{k-1}-\widetilde{{\bm\pi}_{h}\mathbf{A}^{k}},\mathbf{v}),}\\[2mm]
{\displaystyle K_3^{(k)} = \Big(|\Psi^{k-1}|^{2}\mathbf{A}^{k-1}-|\Psi^{k-1}_{h}|^{2}\frac{\overline{\mathbf{A}}^{k}_{h}
+\overline{\mathbf{A}}^{k-1}_{h}}{2},\;\mathbf{v}\Big),}\\[2mm]
{\displaystyle K_4^{(k)}= \left(f(\Psi^{k-1},\Psi^{k-1})-f(\Psi^{k-1}_{h},\Psi^{k-1}_{h}),\;\mathbf{v}\right),}
\end{array}
\end{equation}
we rewrite (\ref{eq:4-4}) as follows:
\begin{equation}\label{eq:4-62}
{\displaystyle \left(\partial^{2}\theta^{k}_{\mathbf{A}},\mathbf{v}\right)+D(\widetilde{\theta^{k}_{\mathbf{A}}},\mathbf{v})=
 K_1^{(k)}+ K_2^{(k)}+ K_3^{(k)}+ K_4^{(k)}.}
\end{equation}

 We first estimate  $\sum\limits_{k=1}^{M} K_1^{(k)}$, $\sum\limits_{k=1}^{M} K_3^{(k)}$ and $\sum\limits_{k=1}^{M} K_4^{(k)}$.
 Under the regularity assumption of $\mathbf{A}$ in (\ref{eq:2-10}),  we have
\begin{equation}\label{eq:4-63}
\sum_{k=1}^{M} |K_1^{(k)}| \leq \frac{C}{\Delta t}\left(h^{2r+2}+(\Delta t)^{4}\right)
+ C\sum_{k=1}^{M} \|\mathbf{v}\|_{\mathbf{L}^2}^{2}.
\end{equation}

By applying the regularity assumption (\ref{eq:2-10}), the interpolation error estimates (\ref{eq:4-1}) and Theorem~\ref{thm3-1}, it is easy to deduce 
 \begin{equation}\label{eq:4-68}
 \sum_{k=1}^{M} K_3^{(k)} \leq \frac{C}{\Delta t}\left[h^{2r}+(\Delta t)^{4}\right] + C\sum_{k=0}^{M}\left(D({\theta}_{\mathbf{A}}^{k}, {\theta}_{\mathbf{A}}^{k})+\|\nabla\theta_{\Psi}^{k}\|_{\mathbf{L}^2}^{2} +\|\mathbf{v}\|_{\mathbf{L}^2}^{2}\right).
\end{equation}

In order to estimate  $\sum\limits_{k=1}^{M} K_4^{(k)}$, we rewrite $ K_4^{(k)}$ in the following form:
\begin{equation}\label{eq:4-69}
\begin{array}{@{}l@{}}
{\displaystyle K_4^{(k)}=\left(f(\Psi^{k-1},\Psi^{k-1})-f(I_{h}\Psi^{k-1}, I_{h}\Psi^{k-1}),\;\mathbf{v}\right) }\\[2mm]
{\displaystyle \quad +\left(f(I_{h}\Psi^{k-1}, I_{h}\Psi^{k-1})-f(\Psi^{k-1}_{h},\Psi^{k-1}_{h}),\;\mathbf{v}\right)\stackrel{\mathrm{def}}{=} K_4^{(k),1}+K_4^{(k),2}.}
\end{array}
\end{equation}

We observe that
\begin{equation}\label{eq:4-70}
\begin{array}{@{}l@{}}
{\displaystyle f(\psi,\psi)-f(\varphi,\varphi)=\frac{\mathrm{i}}{2}(\psi^{\ast}\nabla \psi- \psi \nabla\psi^{\ast})
  -\frac{\mathrm{i}}{2} (\varphi^{\ast} \nabla\varphi-\varphi \nabla\varphi^{\ast}) }\\[2mm]
{\displaystyle = -\frac{\mathrm{i}}{2}\left(\varphi^{\ast}\nabla(\varphi-\psi)-\varphi\nabla(\varphi-\psi)^{\ast}\right)
 +\frac{\mathrm{i}}{2}\left((\varphi-\psi)\nabla\psi^{\ast}-(\varphi-\psi)^{\ast}\nabla\psi\right).}
\end{array}
\end{equation}

We obtain from (\ref{eq:4-1}) and (\ref{eq:4-70}) that
\begin{equation}\label{eq:4-71}
\begin{array}{@{}l@{}}
{\displaystyle K_4^{(k),1}  \leq C\left\{h^{2r}+\|\mathbf{v}\|_{\mathbf{L}^2}^{2}\right\}.}
\end{array}
\end{equation}

Similarly, from (\ref{eq:4-1}) and (\ref{eq:4-70}), we deduce
\begin{equation}\label{eq:4-72}
\begin{array}{@{}l@{}}
{\displaystyle K_4^{(k),2}=\left(f(I_{h}\Psi^{k-1}, I_{h}\Psi^{k-1})-f(\Psi^{k-1}_{h},\Psi^{k-1}_{h}),\;\mathbf{v}\right)}\\[2mm]
{\displaystyle \quad =  -\frac{\mathrm{i}}{2}\left((\theta_{\Psi}^{k-1})^{\ast}\nabla\theta_{\Psi}^{k-1}
-\theta_{\Psi}^{k-1}\nabla(\theta_{\Psi}^{k-1})^{\ast},\;\mathbf{v}\right) }\\[2mm]
{\displaystyle \quad\quad-\frac{\mathrm{i}}{2}\left((I_{h}\Psi^{k-1})^{\ast}\nabla\theta_{\Psi}^{k-1}
-I_{h}\Psi^{k-1}\nabla(\theta_{\Psi}^{k-1})^{\ast},\;\mathbf{v}\right) }\\[2mm]
{\displaystyle\quad \quad +\frac{\mathrm{i}}{2}\left(\theta_{\Psi}^{k-1}\nabla (I_{h}\Psi^{k-1})^{\ast}-(\theta_{\Psi}^{k-1})^{\ast}\nabla I_{h}\Psi^{k-1},\;\mathbf{v}\right)}\\[2mm]
{\displaystyle \quad \leq -\left(f(\theta_{\Psi}^{k-1},\theta_{\Psi}^{k-1}),\mathbf{v}\right)
+ C \| I_{h}\Psi^{k-1}\|_{\mathcal{L}^{\infty}}\|\nabla\theta_{\Psi}^{k-1}\|_{\mathbf{L}^2}\|\mathbf{v}\|_{\mathbf{L}^2}}\\[2mm]
{\displaystyle \quad\quad+C\|\nabla I_{h}\Psi^{k-1}\|_{\mathbf{L}^{3}}\|\theta_{\Psi}^{k-1}\|_{\mathcal{L}^6}
\|\mathbf{v}\|_{\mathbf{L}^2}}\\[2mm]
{\displaystyle \quad \leq -\left(f(\theta_{\Psi}^{k-1},\theta_{\Psi}^{k-1}),\mathbf{v}\right)
+C\left(\|\nabla\theta_{\Psi}^{k-1}\|_{\mathbf{L}^2}^{2}
+\|\mathbf{v}\|_{\mathbf{L}^2}^{2}\right).}
\end{array}
\end{equation}

Combining  (\ref{eq:4-69}), (\ref{eq:4-71}) and (\ref{eq:4-72}) gives
\begin{equation}\label{eq:4-73}
\begin{array}{@{}l@{}}
{\displaystyle\sum_{k=1}^{M} K_4^{(k)}\leq \frac{C h^{2r}}{\Delta t} -\sum_{k=1}^{M}\left(f(\theta_{\Psi}^{k-1},\theta_{\Psi}^{k-1}),\mathbf{v}\right)
+C\sum_{k=1}^{M}\left(\|\nabla\theta_{\Psi}^{k-1}\|_{\mathbf{L}^2}^{2}
+\|\mathbf{v}\|_{\mathbf{L}^2}^{2}\right).}
\end{array}
\end{equation}

From (\ref{eq:4-63}), (\ref{eq:4-68}) and (\ref{eq:4-73}), we obtain
\begin{equation}\label{eq:4-74}
\begin{array}{@{}l@{}}
{\displaystyle \sum_{k=1}^{M} \Big(K_1^{k)}+K_3^{k)}+K_4^{k)}\Big)
\leq -\sum_{k=1}^{M}\left(f(\theta_{\Psi}^{k-1},
\theta_{\Psi}^{k-1}),\mathbf{v}\right)+\frac{C}{\Delta t}\left\{h^{2r}+(\Delta t)^{4}\right\}}\\[2mm]
{\displaystyle \qquad \quad+ C\sum_{k=0}^{M}\left(D({\theta}_{\mathbf{A}}^{k}, {\theta}_{\mathbf{A}}^{k})
+\|\nabla\theta_{\Psi}^{k}\|_{\mathbf{L}^2}^{2}+\|\mathbf{v}\|_{\mathbf{L}^2}^{2}\right).}
\end{array}
\end{equation}

Now  taking $\mathbf{v}= \frac{\displaystyle 1}{\displaystyle 2\Delta t}(\theta_{\mathbf{A}}^{k}
-\theta_{\mathbf{A}}^{k-2})=\frac{\displaystyle 1}{\displaystyle 2}(\partial \theta_{\mathbf{A}}^{k}
+\partial \theta_{\mathbf{A}}^{k-1})$ in (\ref{eq:4-62}), we find
\begin{equation}\label{eq:4-75}
\begin{array}{@{}l@{}}
{\displaystyle \Big(\partial^{2}\theta^{k}_{\mathbf{A}},\frac{1}{2}(\partial \theta_{\mathbf{A}}^{k}+\partial \theta_{\mathbf{A}}^{k-1})\Big)
+D\Big(\widetilde{\theta^{k}_{\mathbf{A}}},\frac{1}{2}(\partial \theta_{\mathbf{A}}^{k}+\partial \theta_{\mathbf{A}}^{k-1})\Big)}\\[2mm]
{\displaystyle \quad = \frac{1}{2\varDelta t}\left(\|\partial \theta_{\mathbf{A}}^{k}\|_{\mathbf{L}^2}^{2}
-\|\partial \theta_{\mathbf{A}}^{k-1}\|_{\mathbf{L}^2}^{2}\right)+\frac{1}{4\Delta t}\left(D({\theta}_{\mathbf{A}}^{k}, {\theta}_{\mathbf{A}}^{k})
-(D({\theta}_{\mathbf{A}}^{k-2}, {\theta}_{\mathbf{A}}^{k-2})\right)}\\[2mm]
{\displaystyle  \quad =  K_1^{(k)}+ K_2^{(k)}+ K_3^{(k)}+ K_4^{(k)}.}
\end{array}
\end{equation}

Note that 
\begin{equation*}
\mathbf{v} = \frac{1}{2}(\partial \theta_{\mathbf{A}}^{k}
+\partial \theta_{\mathbf{A}}^{k-1}) =\frac{1}{2}(\partial \mathbf{A}_{h}^{k}
+\partial \mathbf{A}_{h}^{k-1}) -  \frac{1}{2}(\partial {\bm \pi}_{h}{\mathbf{A}}^{k}
+\partial {\bm \pi}_{h}{\mathbf{A}}^{k-1}). 
\end{equation*}

Thus we have 
\begin{equation}\label{eq:4-75-0}
\begin{array}{@{}l@{}}
{\displaystyle  -\sum_{k=1}^{M}\left(f(\theta_{\Psi}^{k-1},
\theta_{\Psi}^{k-1}),\mathbf{v}\right) }\\[2mm]
{\displaystyle = -\sum_{k=1}^{M} J_5^{(k)} + \sum_{k=1}^{M}\left(f(\theta_{\Psi}^{k-1},
\theta_{\Psi}^{k-1}),\frac{\partial {\bm \pi}_{h}{\mathbf{A}}^{k}
+\partial {\bm \pi}_{h}{\mathbf{A}}^{k-1}}{2}\right) }\\[2mm]
{\displaystyle \leq  -\sum_{k=1}^{M} J_5^{(k)} + C\sum_{k=1}^{M}\|\nabla\theta_{\Psi}^{k-1}\|_{\mathbf{L}^{2}}^{2}}.
\end{array}
\end{equation}
Here we have used the definition of $ J_5^{(k)} $ in (\ref{eq:4-54}).

Multiplying (\ref{eq:4-75}) by $\Delta t $,  and using (\ref{eq:4-74}) and (\ref{eq:4-75-0}), we obtain
\begin{equation}\label{eq:4-76}
\begin{array}{@{}l@{}}
{\displaystyle  \frac{1}{2}\|\partial \theta_{\mathbf{A}}^{M}\|_{\mathbf{L}^2}^{2}
+\frac{1}{4}D({\theta}_{\mathbf{A}}^{M}, {\theta}_{\mathbf{A}}^{M})
+\frac{1}{4}D({\theta}_{\mathbf{A}}^{M-1}, {\theta}_{\mathbf{A}}^{M-1})}\\[2mm]
{\displaystyle \quad \leq C\left\{h^{2r}+(\Delta t)^{4} \right\}
+ C\Delta t\sum_{k=0}^{M}\left(D({\theta}_{\mathbf{A}}^{k}, {\theta}_{\mathbf{A}}^{k})+\|\nabla\theta_{\Psi}^{k}\|_{\mathbf{L}^2}^{2}
+ \|\partial \theta_{\mathbf{A}}^{k}\|_{\mathbf{L}^2}^{2} \right) }\\[2mm]
{\displaystyle \quad+ \Delta t \sum_{k=1}^{M} K_2^{(k)}
-\Delta t\sum_{k=1}^{M} J_5^{(k)}.}
\end{array}
\end{equation}

Since $\mathbf{v}=\frac{\displaystyle 1}{\displaystyle 2}(\partial \theta_{\mathbf{A}}^{k}+\partial \theta_{\mathbf{A}}^{k-1})$, by applying (\ref{eq:4-2}) and the Young's inequality, we get
\begin{equation}\label{eq:4-77}
\begin{array}{@{}l@{}}
{\displaystyle  \Delta t \sum_{k=1}^{M} K_2^{(k)}=\Delta t \sum_{k=1}^{M} D(\mathbf{A}^{k-1}
-\widetilde{{\bm\pi}_{h}\mathbf{A}^{k}}, \frac{1}{2}(\partial \theta_{\mathbf{A}}^{k}
+\partial \theta_{\mathbf{A}}^{k-1}))}\\[2mm]
{\displaystyle \leq C\left\{h^{2r} + (\Delta t)^{4}\right\} + \frac{1}{32} D\left(\theta_{\mathbf{A}}^{M},\theta_{\mathbf{A}}^{M}\right)
+\frac{1}{32} D\left(\theta_{\mathbf{A}}^{M-1},\theta_{\mathbf{A}}^{M-1}\right)
+ C\Delta t \sum_{k=0}^{M}D\left(\theta_{\mathbf{A}}^{k},\theta_{\mathbf{A}}^{k}\right)}\\[2mm]
\end{array}
\end{equation}

Combining (\ref{eq:4-60}), (\ref{eq:4-76}) and (\ref{eq:4-77}) implies
\begin{equation}\label{eq:4-81}
\begin{array}{@{}l@{}}
{\displaystyle \|\theta_{\Psi}^{M}\|_{\mathcal{L}^2}^{2}+\frac{1}{64}\|\nabla\theta_{\Psi}^{M}\|_{\mathbf{L}^2}^{2}
 +\frac{1}{2}\|\partial \theta_{\mathbf{A}}^{M}\|_{\mathbf{L}^2}^{2} + \frac{1}{8}D({\theta}_{\mathbf{A}}^{M}, {\theta}_{\mathbf{A}}^{M})
 +\frac{1}{8}D({\theta}_{\mathbf{A}}^{M-1}, {\theta}_{\mathbf{A}}^{M-1})}\\[2mm]
{\displaystyle \quad\leq  C\left\{h^{2r}+(\Delta t)^{4}\right\}
+ C\Delta t \sum_{k=0}^{M}\left\{D({\theta}_{\mathbf{A}}^{k},{\theta}_{\mathbf{A}}^{k})
+\|\partial {\theta}_{\mathbf{A}}^{k}\|_{\mathbf{L}^2}^{2}+\|\theta_{\Psi}^{k}\|_{\mathcal{L}^2}^{2}
+\|\nabla\theta_{\Psi}^{k}\|_{\mathbf{L}^2}^{2}\right\}.}
\end{array}
\end{equation}

By applying the discrete Gronwall's inequality, we have
\begin{equation}\label{eq:4-82}
 \max_{1\leq k \leq M}\left\{\|\theta_{\Psi}^{k}\|_{\mathcal{L}^2}^{2}
+\|\nabla\theta_{\Psi}^{k}\|_{\mathbf{L}^2}^{2}
+\|\partial \theta_{\mathbf{A}}^{k}\|_{\mathbf{L}^2}^{2}
+D({\theta}_{\mathbf{A}}^{k}, {\theta}_{\mathbf{A}}^{k})\right\} \leq  C\left\{h^{2r}+(\Delta t)^{4}\right\}. 
\end{equation}

Combine (\ref{eq:4-82}) with the interpolation error estimates (\ref{eq:4-1}) and we can complete the proof of Theorem~\ref{thm2-1}.

\section{Numerical tests}\label{sec-6}

To validate the developed algorithm and to confirm the theoretical analysis reported
in this paper, we present numerical simulations for the following case studies.

\begin{exam}\label{exam6-1}
We consider the Maxwell-Schr\"{o}dinger system (\ref{eq:1-2}), where the initial-boundary conditions are as
follows:
\begin{equation*}
\begin{array}{@{}l@{}}
{\displaystyle \Psi(\mathbf{x},t)=0,\quad \mathbf{A}(\mathbf{x},t)\times\mathbf{n}=0,  \quad (\mathbf{x},t)\in
\partial \Omega\times(0,T),} \\[2mm]
{\displaystyle \Psi(\mathbf{x},0) = 2\sqrt{2}\sin(\pi x_1)\sin(\pi x_2)\sin(\pi x_3) , \quad
\mathbf{A}_{t}(\mathbf{x},0)=\mathbf{A}_1(\mathbf{x})=0,  }\\[2mm]
{\mathbf{A}(\mathbf{x},0)=\mathbf{A}_0(\mathbf{x})
=\big(10x_1 x_2 x_3(1-x_2)(1-x_3),  10x_1 x_2 x_3(1-x_1)(1-x_3), }\\[2mm]
{\displaystyle\qquad \qquad \qquad \qquad 10x_1 x_2 x_3(1-x_1)(1-x_2)\big). }
\end{array}
\end{equation*}
Here we take $\Omega=(0,1)^{3} $, $ V_0 = 0 $, $ T = 0.5$, and the time step $\Delta t = 0.0025 $. Note that the initial wave function $\Psi(\mathbf{x},0)$ is the eigenfunction of the stationary Schr\"{o}dinger's equation.
\end{exam}
The numerical results are displayed in Fig.~5.1.

\begin{figure}
\begin{center}
\tiny{a}\includegraphics[width=6cm,height=6cm] {./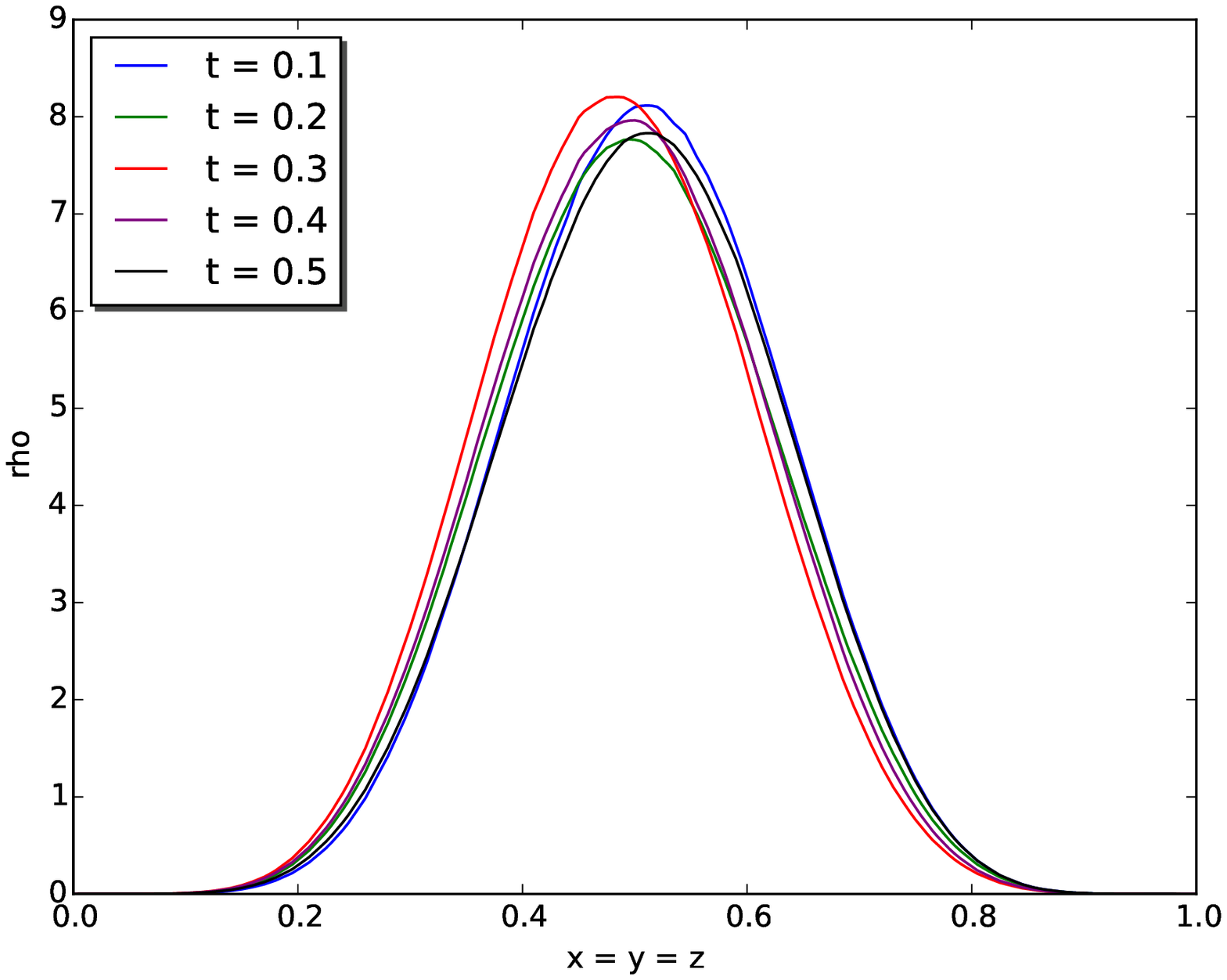}
\tiny{b}\includegraphics[width=6cm,height=6cm] {./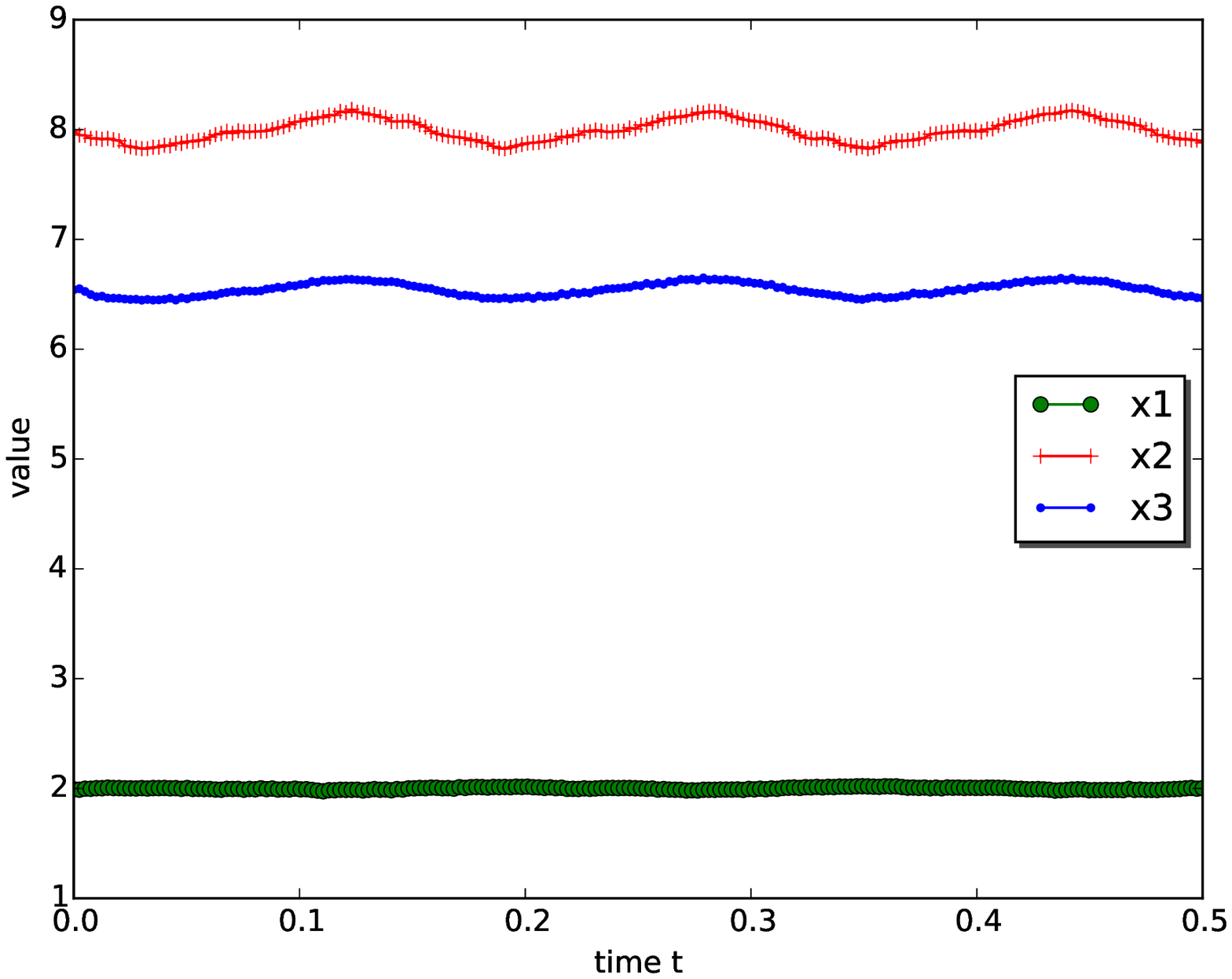}
\caption{Example~\ref{exam6-1} : (a) The evolution of the density function $ \rho(\mathbf{x},t) $ on the line  $
x_1=x_2=x_3 $ at time $t =0.1, 0.2, 0.3 ,0.4, 0.5$; (b) the evolution of $ \rho(\mathbf{x}^{(1)},t) $, $ \rho(\mathbf{x}^{(2)},t) $ and $ \rho(\mathbf{x}^{(3)}, t)$, where $ \mathbf{x}^{(1)}=(0.25,0.5,0.75) $, $\mathbf{x}^{(2)}=(0.5,0.5,0.5) $ and $ \mathbf{x}^{(3)}=(0.4,0.5,0.6)$.
}
\end{center}
\end{figure}\label{fig:6-1}

\begin{rem}\label{rem6-1}
The numerical results illustrated in Fig.~5.1 clearly show that the change of $\rho(\mathbf{x}, t)$ is smooth with respect to $t$ and the assumption on which the modified Maxwell-Schr\"{o}dinger equations are based is valid in this case.
\end{rem}

\begin{exam}\label{exam6-2}
We consider the modified Maxwell--Schr\"{o}dinger's equations as follows:
\begin{equation}\label{eq:6-11}
\left\{
\begin{array}{@{}l@{}}
{\displaystyle  -\mathrm{i}\frac{\partial \Psi}{\partial t}+
\frac{1}{2}\left(\mathrm{i}\nabla +\mathbf{A}\right)^{2}\Psi
 + V_{0}\Psi = f ,\,\, (\mathbf{x},t)\in
\Omega\times(0,T),}\\[2mm]
{\displaystyle \frac{\partial ^{2}\mathbf{A}}{\partial t^{2}}+\nabla\times
(\nabla\times \mathbf{A}) - \gamma\nabla(\nabla \cdot \mathbf{A}) +\frac{\mathrm{i}}{2}\big(\Psi^{*}\nabla{\Psi}-\Psi\nabla{\Psi}^{*}\big)}\\[2mm]
{\displaystyle \quad\quad+\,\,\vert\Psi\vert^{2}\mathbf{A} =\mathbf{g}(\mathbf{x},t),\quad \,\, (\mathbf{x},t)\in \Omega\times(0,T),}
\end{array}
\right.
\end{equation}
where the initial-boundary conditions are given in (\ref{eq:1-7})-(\ref{eq:1-8}).
We take $\Omega = (0,1)^3$, $T = 4$, $ \gamma = 1$ and $V_0 = 5$. The exact solution $(\Psi,\mathbf{A})$ of (\ref{eq:6-11}) is given by
\begin{equation*}
\begin{array}{@{}l@{}}
{\displaystyle \Psi(\mathbf{x}, t) = 20 e^{\mathrm{i}t}(1+3t^2)\exp\big((x_1+x_2+x_3)/5\big)x_1 x_2 x_3(1-x_1)(1-x_2)(1-x_3) }\\[2mm]
{\displaystyle   \qquad\qquad\qquad+  5.0e^{\mathrm{i}\pi t}  \sin(2\pi x_1)\sin(2\pi x_2)\sin(2\pi x_3), }
\end{array}
\end{equation*}
\begin{equation*}
\begin{array}{@{}l@{}}
{\displaystyle \mathbf{A}(\mathbf{x},t)=\sin(\pi t)\Big(\cos(2\pi x_1)\sin(2\pi x_2)\sin(2\pi x_3),
\sin(2\pi x_1)\cos(2\pi x_2)\sin(2\pi x_3),}\\[2mm]
{\displaystyle \quad \sin(2\pi x_1)\sin(2\pi x_2)\cos(2\pi x_3)\Big)
+ \cos(\pi t)\Big(\cos(\pi x_1)\sin(\pi x_2)\sin(\pi x_3),}\\[2mm]
{\displaystyle \quad
\sin(\pi x_1)\cos(\pi x_2)\sin(\pi x_3), \sin(\pi x_1)\sin(\pi x_2)\cos(\pi x_3)\Big).}
\end{array}
\end{equation*}
The functions $f$ and $\mathbf{g}$ in (\ref{eq:6-11}) are chosen correspondingly to the exact solution $(\Psi,\mathbf{A})$.
\end{exam}

A uniform tetrahedral partition is generated with $M+1$ nodes in each direction and $6M^{3}$ elements in total.
We solve the system(\ref{eq:6-11}) by the proposed Crank-Nicolson Galerkin finite element scheme (\ref{eq:2-9}) with linear elements and quadratic elements, respectively. To confirm our error analysis, we take $\Delta t = h^{\frac{1}{2}}$ for the linear element method and $\Delta t = h$ for the quadratic element method respectively. Numerical results for the linear element method and the quadratic element method at time $t=1.0, 2.0, 3.0, 4.0 $ are listed in Tables~\ref{table6-1} and~\ref{table6-2}, respectively.

\begin{table}[htb]
\caption{$H^1$ error of linear FEM with $h = \frac{1}{M}$ and $\Delta t = h^{\frac12}$.}\label{table6-1}
\begin{center}
\begin{tabular}{c|cccc}
   \hline\hline
    & &   $\Vert \mathbf{A}^{k}_{h} - \mathbf{A}(\cdot, t_k) \Vert _{\mathbf{H}^{1}} $ \\
   \hline
    t &  M=25 & M=50 & M=100 & Order   \\
    \hline
    1.0 & 9.7908e-01 & 4.8501e-01 & 2.1082e-01 & 1.11 \\
    \hline
     2.0 & 7.6414e-01 & 3.7807e-01 & 1.7681e-01 & 1.06 \\
     \hline
     3.0 & 6.3094e-01 & 3.1006e-01 & 1.5308e-01 & 1.02 \\
    \hline
     4.0 & 7.2739e-01 & 3.5204e-01 & 1.7705e-01 & 1.02 \\
     \hline\hline
   & &   $ \Vert \Psi_{h}^{k}-\Psi(\cdot,t_k)\Vert_{\mathcal{H}^1}$    \\
     \hline
    t &  M=25 & M=50 & M=100 & Order   \\
    \hline
    1.0 & 6.8289e-01 & 3.3004e-01 & 1.5046e-01 & 1.09 \\
    \hline
     2.0 & 8.0035e-01 & 3.6227e-01 & 1.6032e-01 & 1.16 \\
     \hline
     3.0 & 4.1192e-01 & 1.9485e-01 & 1.0286e-01 & 1.00 \\
    \hline
     4.0 & 2.3418e-01 & 1.1430e-01 & 5.6022e-02  & 1.03 \\
     \hline\hline

  \end{tabular}
  \end{center}
\end{table}

\begin{table}[htb]
\caption{$H^1$ error of quadratic FEM with $h = \Delta t= \frac{1}{M}.$}\label{table6-2}
\begin{center}
\begin{tabular}{c|cccc}
   \hline\hline
    & &   $\Vert \mathbf{A}^{k}_{h} - \mathbf{A}(\cdot, t_k) \Vert _{\mathbf{H}^{1}} $ \\
   \hline
    t &  M=25 & M=50 & M=100 & Order   \\
    \hline
    1.0 & 3.3770e-02 & 8.4984e-03 & 2.2743e-03 & 1.95 \\
    \hline
     2.0 & 2.2786e-02 & 5.7068e-03  & 1.4966e-03 & 1.96 \\
     \hline
     3.0 & 3.4016e-02 & 8.9115e-03 & 2.4360e-03 & 1.90 \\
    \hline
     4.0 & 3.9787e-02 &  9.0740e-03 & 2.3467e-03  & 2.04 \\
     \hline\hline
   & &   $ \Vert \Psi_{h}^{k}-\Psi(\cdot,t_k)\Vert_{\mathcal{H}^1}$    \\
     \hline
    t &  M=25 & M=50 & M=100 & Order   \\
    \hline
    1.0 & 2.8221e-02 & 6.8364e-03 & 1.8024e-03 & 1.98 \\
    \hline
     2.0 & 4.5738e-02 & 1.1608e-02 & 2.6719e-03 & 2.05 \\
     \hline
     3.0 & 3.2712e-02 & 8.5698e-03 & 2.2101e-03 & 1.94 \\
    \hline
     4.0 & 2.1868e-02 & 5.1495e-03 & 1.3250e-03  & 2.02 \\
     \hline\hline

  \end{tabular}
  \end{center}
\end{table}

\begin{rem}\label{rem6-2}
Numerical results in Tables~\ref{table6-1} and~\ref{table6-2} are in good agreement with the theoretical analysis,
see Theorem~\ref{thm2-1}.
\end{rem}

\begin{exam}\label{exam6-3}
We consider the following modified Maxwell--Schr\"{o}dinger's equations 
\begin{equation} \label{eq:6-12}
\left\{
\begin{array}{@{}l@{}}
{\displaystyle  -\mathrm{i}\frac{\partial \Psi}{\partial t}+
\frac{1}{2}\left(\mathrm{i}\nabla +\mathbf{A}\right)^{2}\Psi
 + V_{0}\Psi = 0 ,\,\, (\mathbf{x},t)\in
\Omega\times(0,T),}\\[2mm]
{\displaystyle \frac{\partial ^{2}\mathbf{A}}{\partial t^{2}}+\nabla\times
(\nabla\times \mathbf{A}) - \gamma\nabla(\nabla \cdot \mathbf{A}) +\frac{\mathrm{i}}{2}\big(\Psi^{*}\nabla{\Psi}-\Psi\nabla{\Psi}^{*}\big)}\\[2mm]
{\displaystyle \quad\quad+\,\,\vert\Psi\vert^{2}\mathbf{A} =\mathbf{g}(\mathbf{x},t),\quad \,\, (\mathbf{x},t)\in \Omega\times(0,T).}\\[2mm]
{\displaystyle \Psi(\mathbf{x},t)=0,\quad \mathbf{A}(\mathbf{x},t)\times\mathbf{n}=0, \quad \nabla \cdot \mathbf{A}(\mathbf{x},t) = 0, \quad (\mathbf{x},t)\in
\partial \Omega\times(0,T),}\\[2mm]
{\displaystyle \Psi(\mathbf{x},0) = \Psi_0(\mathbf{x}),\quad\mathbf{A}(\mathbf{x},0)=\mathbf{A}_{0}(\mathbf{x}),\quad
\mathbf{A}_{t}(\mathbf{x},0)=\mathbf{A}_{1}(\mathbf{x}),}
\end{array}
\right.
\end{equation}
with
\begin{equation}
\begin{array}{@{}l@{}}
{\displaystyle \Psi_{0}(\mathbf{x}) = 2\sqrt{2}\sin(\pi x_1)\sin(\pi x_2)\sin(\pi x_3) , \quad
\mathbf{A}_{0}(\mathbf{x})=\mathbf{A}_{1}(\mathbf{x})=0,} \\[2mm]
{\displaystyle \mathbf{g}(\mathbf{x}, t) = \left(10\sin(1.5{\pi}^{2} t), 10\sin(1.5{\pi}^{2} t), 10\cos(1.5{\pi}^{2} t)\right).}
\end{array}
\end{equation}

In this example we take $\Omega = (0,1)^3$, $T = 10.0$ , $\gamma = 1$, $V_0 = 0$. Using the mesh in Example~\ref{exam6-2} with M = 50, we solve the system (\ref{eq:6-12}) by the proposed Crank-Nicolson Galerkin finite element scheme (\ref{eq:2-9}) with linear elements. The time step $\Delta t = 0.0025 $. 

In Fig.~5.2 we display the numerical results of $\rho = \vert\Psi\vert^{2}$  on the line  $
x_1=x_2=x_3 $  and on the intersection $x_1 = 0.5$ at time $t =2.0, 4.0, 6.0 ,8.0, 10.0$.

In Fig.~5.3 we plot the numerical results of $\rho = \vert\Psi\vert^{2}$  on the intersections $x_1 = 0.4, \,x_2=0.4 \,$ and $ x_3 = 0.4$ at time $t =10.0$, respectively.

\begin{figure}
\begin{center}
\tiny{(a)}\includegraphics[width=6cm,height=6cm] {./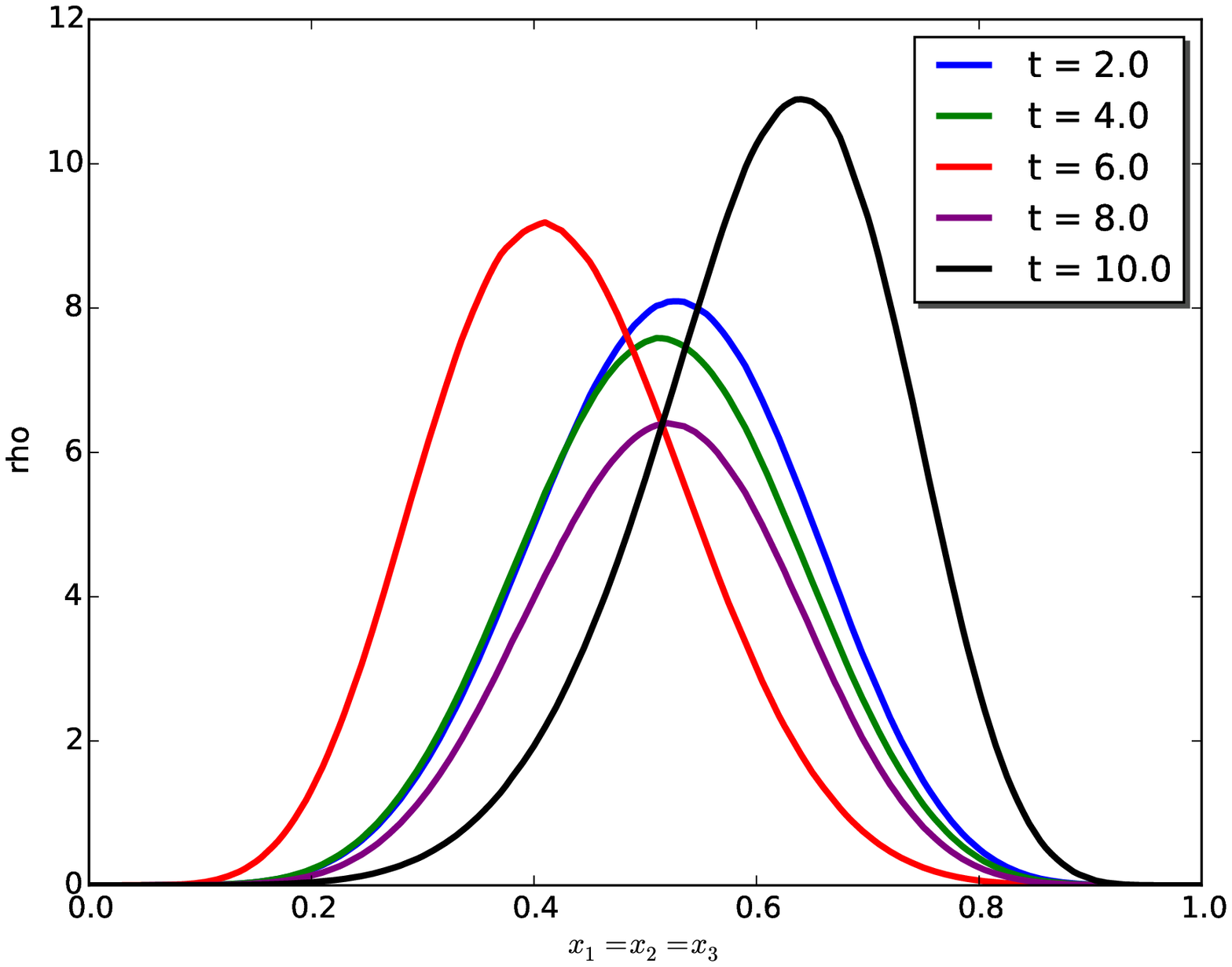}
\tiny{(b)}\includegraphics[width=6cm,height=6cm] {./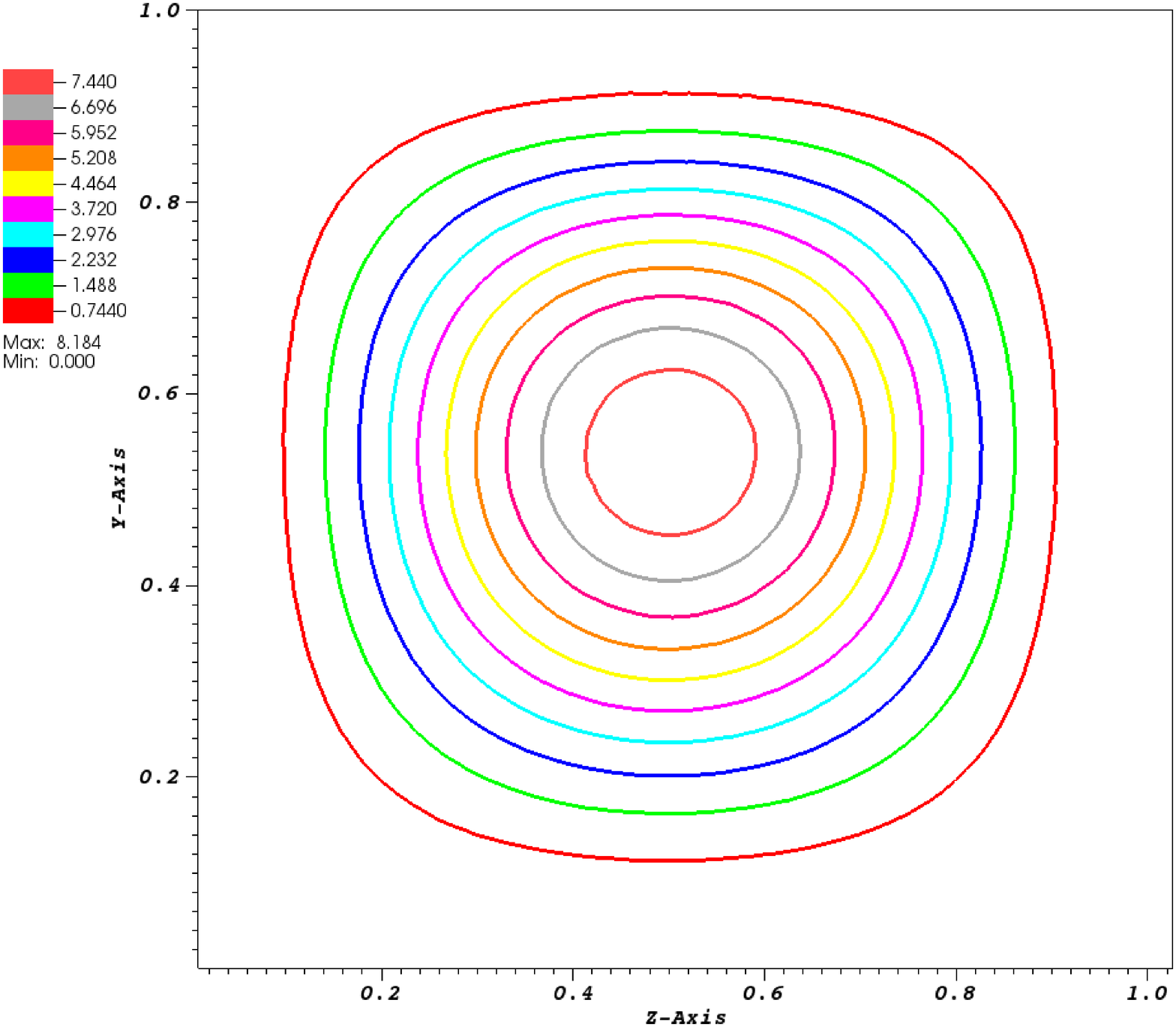}
\tiny{(c)}\includegraphics[width=6cm,height=6cm] {./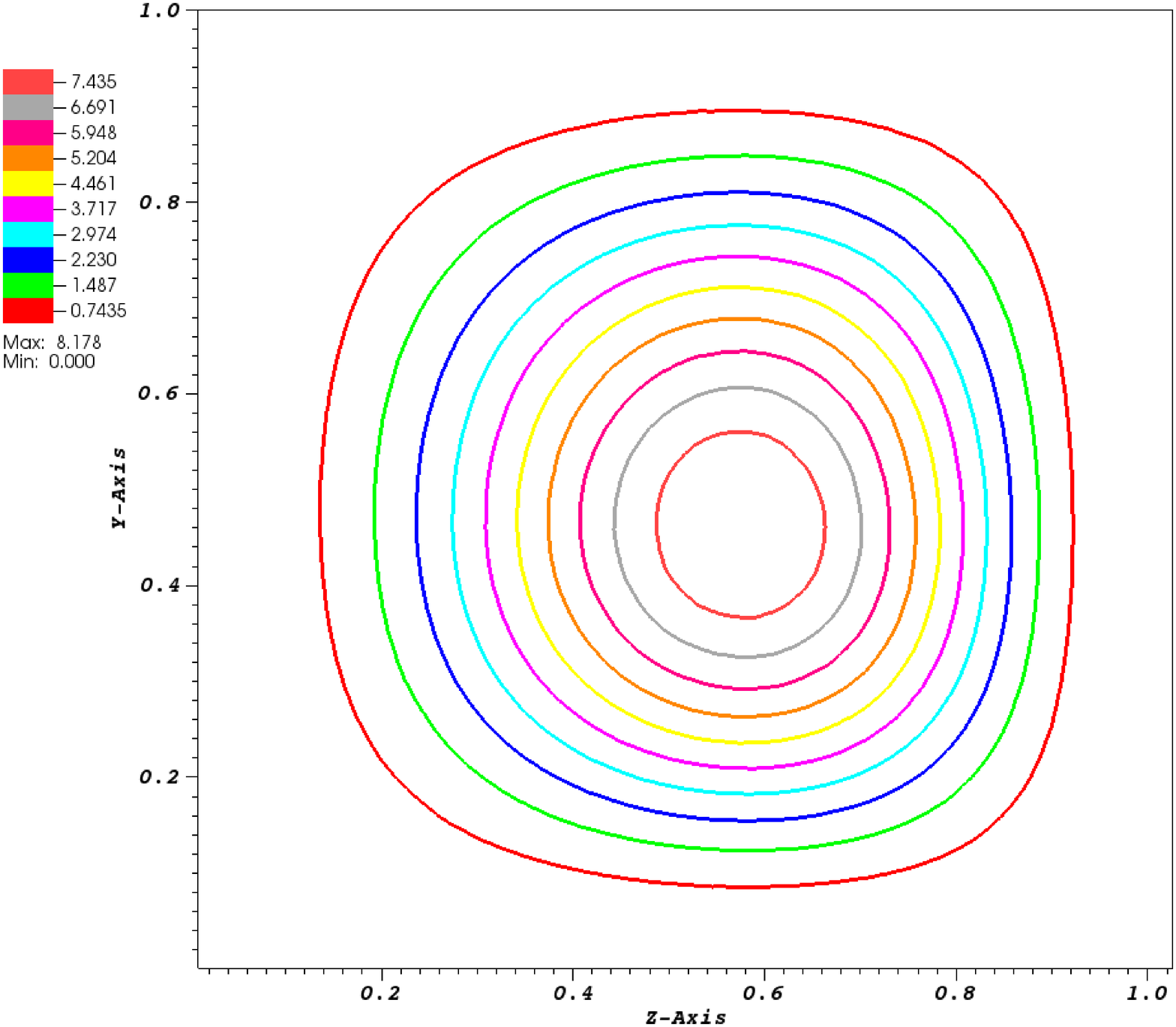}
\tiny{(d)}\includegraphics[width=6cm,height=6cm] {./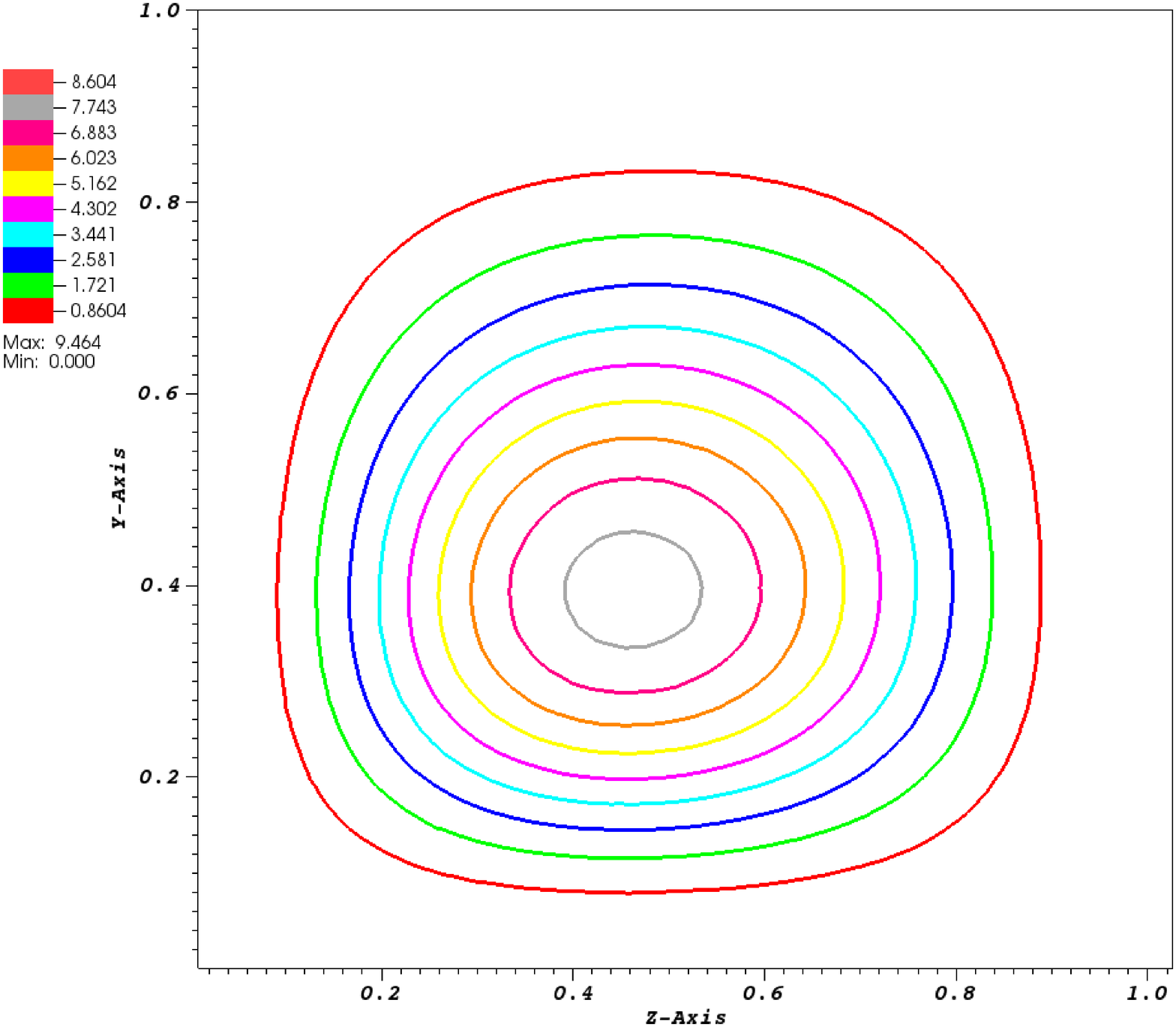}
\tiny{(e)}\includegraphics[width=6cm,height=6cm] {./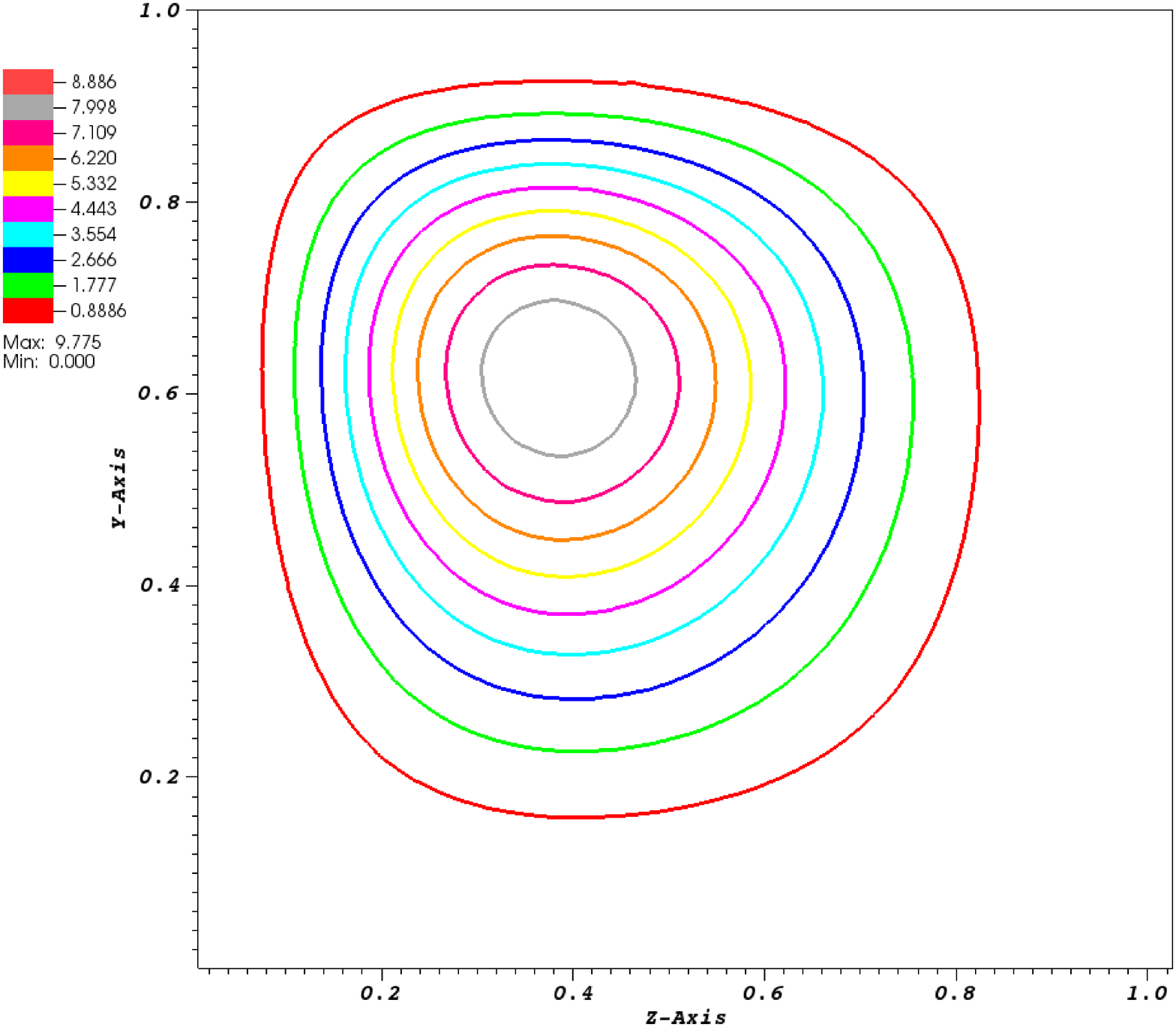}
\tiny{(f)}\includegraphics[width=6cm,height=6cm] {./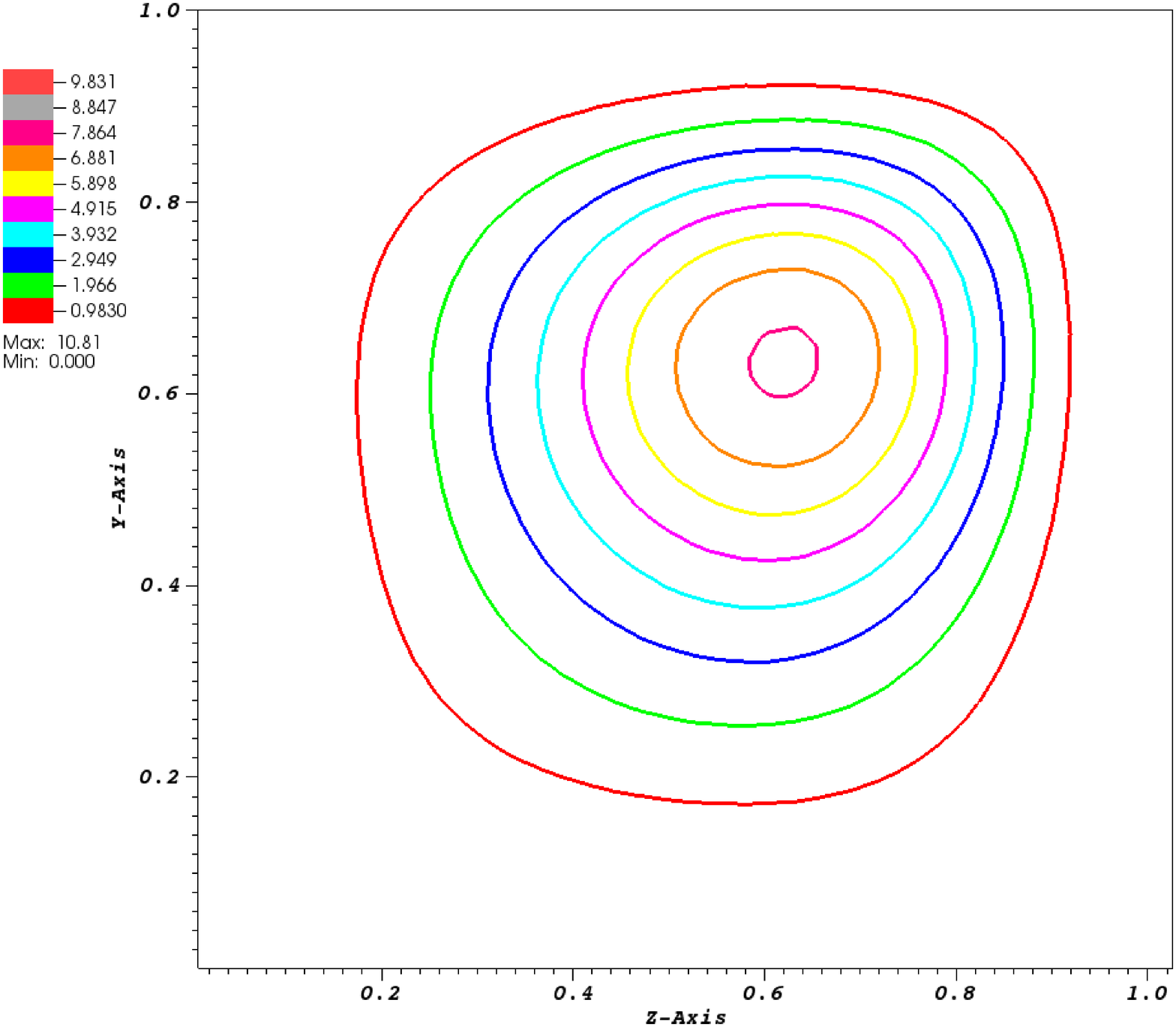}
\caption{Example~\ref{exam6-3}: numerical results of the density function  $\rho = \vert\Psi\vert^{2}$ on the line  $
x_1=x_2=x_3 $ and  the contour plots of $\rho = \vert\Psi\vert^{2}$  on the intersection $x_1 = 0.5$ at time $t =2.0, 4.0, 6.0 ,8.0, 10.0$.
(a)  $\rho = \vert\Psi\vert^{2}$ on the line  $
x_1=x_2=x_3 $ at time $t =2.0, 4.0, 6.0 ,8.0, 10.0$;\, (b) the contour plot at time $t =2.0$;\, (c) the contour plot  at time $t =4.0$; \, (d) the contour plot   at time $t =6.0$; \,(e) the contour plot  at time $t =8.0$;\,
(f) the contour plot  at time $t =10.0$;\, }\label{fig6.2}
\end{center}
\end{figure}

\begin{figure}
\begin{center}
\tiny{(a)}\includegraphics[width=6cm,height=6cm] {./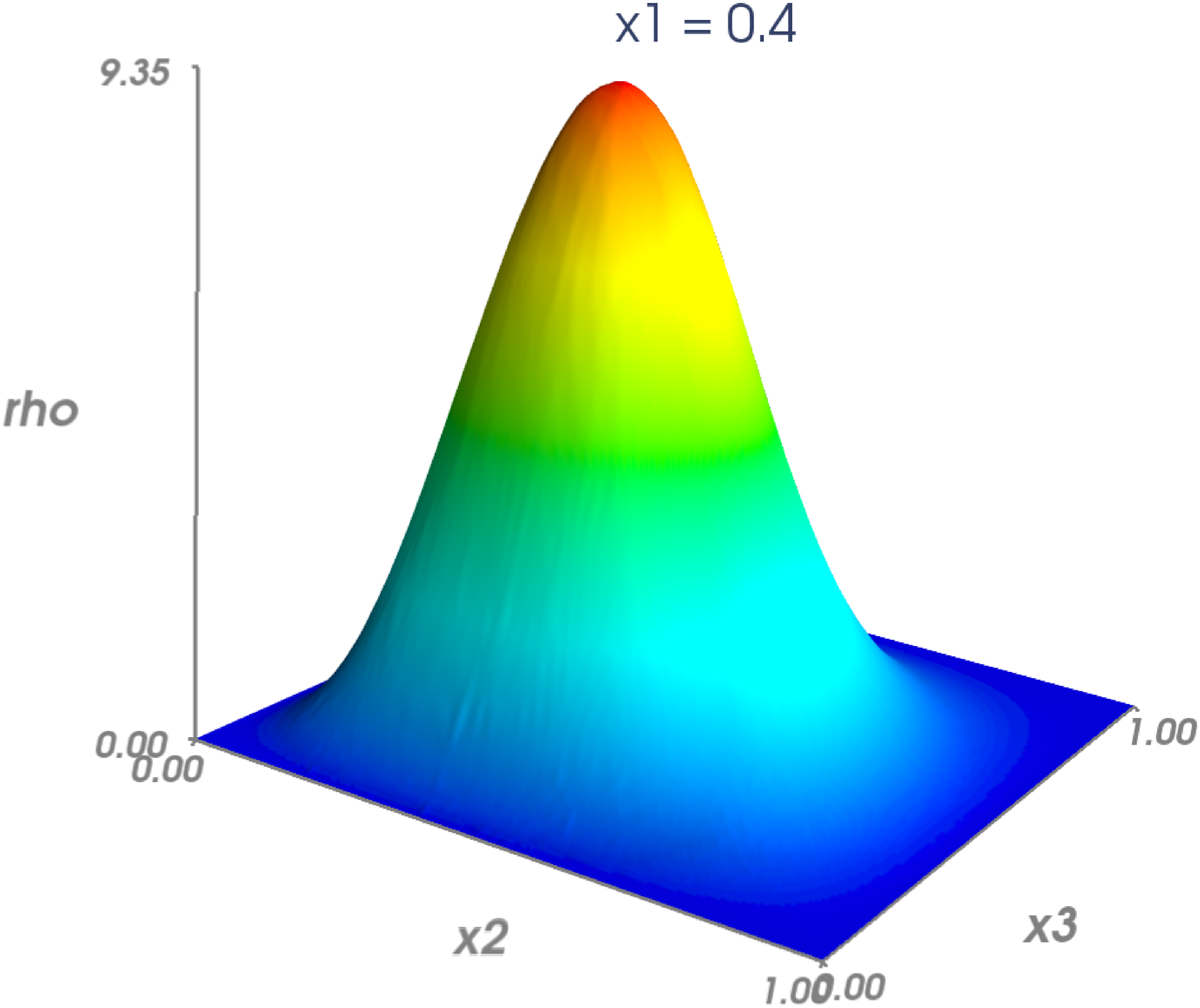}
\tiny{(b)}\includegraphics[width=6cm,height=6cm] {./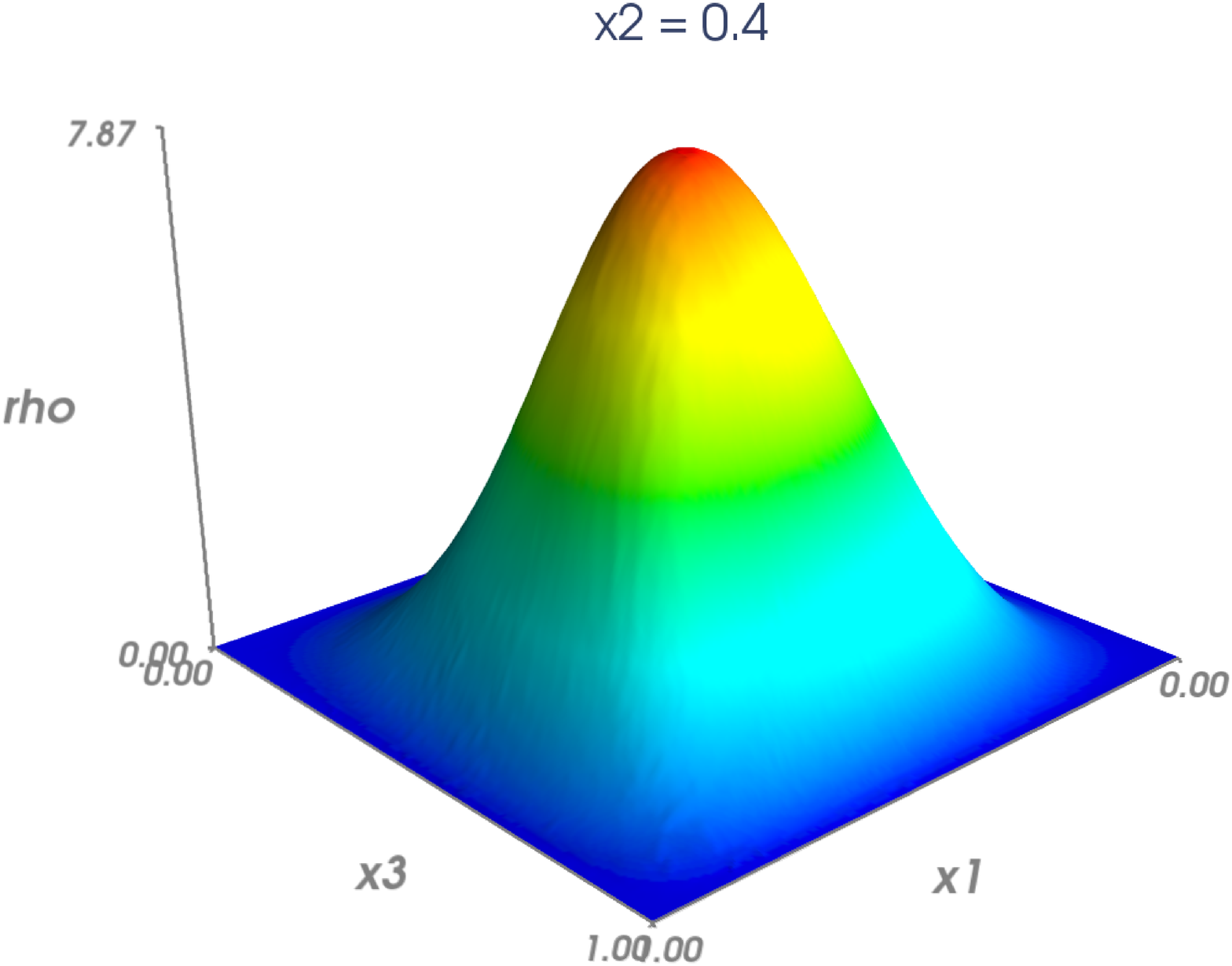}
\tiny{(c)}\includegraphics[width=6cm,height=6cm] {./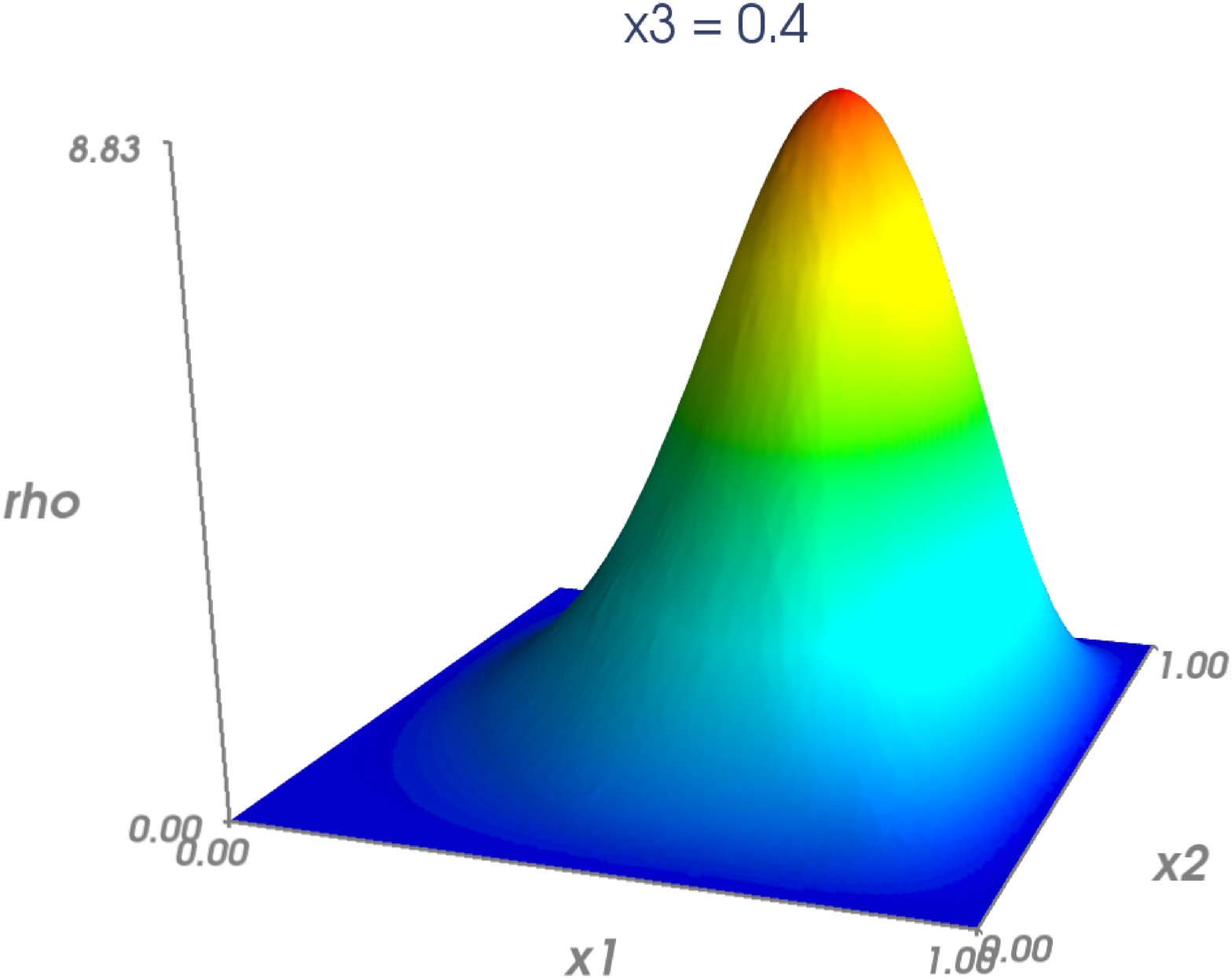}
\caption{Example~\ref{exam6-3}: numerical results of the density function  $\rho = \vert\Psi\vert^{2}$ on the intersection $x_1 = 0.4,\,x_2 = 0.4$ and $x_3 = 0.4$ at time $t = 10.0$.
(a)  $\rho = \vert\Psi\vert^{2}$ on the intersection $x_1 = 0.4$ ;\, (b) $\rho = \vert\Psi\vert^{2}$ on the intersection $x_2 = 0.4$;\, (c) $\rho = \vert\Psi\vert^{2}$ on the intersection $x_3 = 0.4$; \, }\label{fig6.3}
\end{center}
\end{figure}

Althouth the Maxwell-Schr\"{o}dinger system and the time-dependent Ginzburg-Landau equations are somehow formally similar, they describe the different physical phenomenons. The time-dependent Ginzburg-Landau equations describe the vortex dynamics of superconductor \cite{Gao-1, Gao-2, Gao-3} while the Maxwell-Schr\"{o}dinger equations describe the wave packet dynamics of an electron. As can be seen in Fig.~5.2-5.3, the wave packet of the electron is located at the center of the computational domain at first and the external and its self-induced electromagnetic fields cause the motion of the wave packet.  
Unlike the time-dependent Ginzburg-Landau equations, no stable state is observed in our computation.

\end{exam}

\section{Conclusions}
We have presented the optimal $H^1$ error estimates of a Crank-Nicolson Galerkin finite element method 
for the modified Maxwell-Schr\"{o}dinger equations , which are derived from the original equations under some assumptions. The techniques used in this paper may also be applied to other nonlinear PDEs, such as the Ginzburg-Landau equations. The original Maxwell-Schr\"{o}dinger system is challenging and difficult to perform numerical computation and theoretical analysis. Our work can serve as an elementary attempt for the numerical analysis of this system. We will study the original system in a further work using the mixed finite element method.

\end{document}